\documentclass[preprint,11pt]{elsarticle_2}
\usepackage{stmaryrd}

\usepackage{pdfpages}

\usepackage{amscd}
\usepackage{amsmath,amsthm,amssymb}


\usepackage{mathptmx}

\usepackage{amsfonts}

\usepackage{float}

\usepackage{xcolor}
\usepackage{colortbl}

\definecolor{WhiteRed}{rgb}{1.00,0.50,0.50}
\definecolor{RedOrange}{rgb}{1.00,0.65,0.47}
\definecolor{whiteblue}{rgb}{0.58,0.58,1.00}

\usepackage{multirow}

\DeclareFontFamily{U}  {MnSymbolC}{}
  
  \DeclareFontShape{U}{MnSymbolC}{m}{n}{
    <-6>  MnSymbolC5
   <6-7>  MnSymbolC6
   <7-8>  MnSymbolC7
   <8-9>  MnSymbolC8
   <9-10> MnSymbolC9
  <10-12> MnSymbolC10
  <12->   MnSymbolC12}{}
\DeclareFontShape{U}{MnSymbolC}{b}{n}{
    <-6>  MnSymbolC-Bold5
   <6-7>  MnSymbolC-Bold6
   <7-8>  MnSymbolC-Bold7
   <8-9>  MnSymbolC-Bold8
   <9-10> MnSymbolC-Bold9
  <10-12> MnSymbolC-Bold10
  <12->   MnSymbolC-Bold12}{}

\DeclareSymbolFont{MnSyC} {U} {MnSymbolC}{m}{n}

\usepackage{rotating}


\DeclareMathSymbol{\veedot}{\mathop}{MnSyC}{47}



%
%
%
%
%



\usepackage{relsize}

\usepackage{color}
\usepackage{xspace}
\usepackage{cleveref}

\usepackage{bussproofs}

\usepackage{tikz}
\usepackage{bpextra}

\usepackage{enumerate}

\usepackage{array}
\newcolumntype{C}[1]{>{\centering\arraybackslash}p{#1}}

\DeclareSymbolFont{letters}{OML}{cmm}{m}{it}

\DeclareMathAlphabet{\mathcal}{OMS}{cmsy}{m}{n}


\newtheorem{theorem}{Theorem}[section]
\newtheorem{corollary}[theorem]{Corollary}
\newtheorem{lemma}[theorem]{Lemma}
\newtheorem{proposition}[theorem]{Proposition}

\newtheorem{example}[theorem]{Example}

\newtheorem{fact}[theorem]{Fact}

\newtheorem{remark}[theorem]{\textrm{\textbf{Remark}}}

\theoremstyle{definition}
\newtheorem{definition}[theorem]{Definition}

\newtheorem*{claim*}{Claim}


\newcommand{\CPL}{\ensuremath{\mathbf{CPL}}\xspace}

\newcommand{\PML}{\ensuremath{\mathbf{CPL}(\might)}\xspace}

\newcommand{\PU}{\ensuremath{\mathbf{CPL}(\ror)}\xspace}
\newcommand{\PUw}{\ensuremath{\mathbf{CPL(\ror)}}\xspace}

\newcommand{\PAm}{\ensuremath{\mathbf{CPL}(\Upsilon)}\xspace}

\newcommand{\Picnst}{\ensuremath{\CPL(\idep(\cdot))}\xspace}

\newcommand{\PInc}{\ensuremath{\mathop{\mathbf{CPL}(\subseteq)}}\xspace}
\newcommand{\PIncz}{\ensuremath{\mathop{\mathbf{CPL}(\subseteq_0)}}\xspace}

\newcommand{\LL}{\ensuremath{\mathsf{L}}\xspace}

\newcommand{\dblsetminus}{\mathbin{{\setminus}\mspace{-5mu}{\setminus}}}

\newcommand{\dep}{\ensuremath{\mathop{=\!}}\xspace}
\newcommand{\idep}{\ensuremath{\mathop{\neq\!}}\xspace}

\newcommand{\sor}{\ensuremath{\vee}\xspace}
\newcommand{\ior}{\ensuremath{\mathbin{\rotatebox[origin=c]{-90}{$\geqslant$}}}\xspace}

\DeclareMathOperator*{\bigsor}{\bigvee}

\newcommand{\bigveedot}{\ensuremath{\mathop{\cdot{\hspace{-1.4ex}\bigvee}}}\xspace}

\newcommand{\ror}{\ensuremath{\veedot}\xspace}
\newcommand{\bigror}{\xspace\ensuremath{\bigveedot}\xspace}
\newcommand{\bigroril}{\ensuremath{\xspace\hspace{.15ex}\mathop{\cdot\hspace{-1.4ex}\bigvee}}\xspace}

\newcommand{\might}{\ensuremath{\triangledown}\xspace}

\newcommand{\vvee}{\raisebox{1pt}{\ensuremath{\,\mathop{\mathsmaller{\mathsmaller{\dblsetminus\hspace{-0.23ex}/}}}\,}}}

\newcommand{\anm}{\ensuremath{\mathop{\neq\!}}\xspace}



\newcommand{\ci}{\ensuremath{\wedge\textsf{I}}\xspace}

\newcommand{\ce}{\ensuremath{\wedge\textsf{E}}\xspace}
\newcommand{\sori}{\ensuremath{\sor\textsf{I}}\xspace}
\newcommand{\sore}{\ensuremath{\sor\textsf{E}}\xspace}

\newcommand{\dstr}{\ensuremath{\textsf{Dstr}}\xspace}

\newcommand{\topi}{\ensuremath{\top\textsf{I}}\xspace}

\newcommand{\incext}{\ensuremath{\subseteq\!\!\textsf{Ext}}\xspace}
\newcommand{\incexc}{\ensuremath{\subseteq\!\mathsf{Exc}}\xspace}
\newcommand{\incctr}{\ensuremath{\subseteq\!\mathsf{Ctr}}\xspace}
\newcommand{\inctrs}{\ensuremath{\subseteq\!\!\mathsf{Trs}}\xspace}

\newcommand{\inccmp}{\ensuremath{\subseteq\!\!\mathsf{Cmp}}\xspace}

\newcommand{\incid}{\ensuremath{\subseteq\!\mathsf{Id}}\xspace}

\newcommand{\incrdt}{\ensuremath{\subseteq\!\mathsf{Rdt}}\xspace}

\newcommand{\inczext}{\ensuremath{\subseteq_0\!\!\mathsf{Ext}}\xspace}

\newcommand{\rori}{\ensuremath{\ror\textsf{I}}\xspace}

\newcommand{\todo}[1]{}

\journal{}

\begin{document}

%

\begin{frontmatter}

\title{Propositional union closed team logics}

 \author{Fan Yang}

  \address{Department of Mathematics and Statistics, PL 68 (Pietari Kalmin katu 5),
00014 University of Helsinki,
Finland\\fan.yang.c@gmail.com}

\begin{abstract}
In this paper, we study several propositional team logics that are closed under unions, including propositional inclusion logic. We show that all these logics are expressively complete, and we introduce sound and complete systems of natural deduction for these logics. We also discuss the locality property and its connection with interpolation in these logics. 
\end{abstract}

\begin{keyword}
dependence logic  \sep inclusion logic  \sep team semantics


\MSC[2010] 03B60


\end{keyword}

\end{frontmatter}

\section{Introduction}

In this paper, we study propositional union closed team logics. These logics are variants of {\em dependence logic}, which was introduced by V\"{a}\"{a}n\"{a}nen \cite{Van07dl} as a non-classical first-order logic for reasoning about dependencies. This framework extends the classical logic by adding new atomic  formulas for charactering dependence and independence between variables. Examples of such atoms are {\em dependence atoms} (giving rise to {\em dependence logic} \cite{Van07dl}),  {\em inclusion atoms} (giving rise to {\em inclusion logic} \cite{Pietro_I/E}) and {\em independence atoms} (giving rise to {\em independence logic} \cite{D_Ind_GV}).   
Dependence logic and its variants adopts the so-called {\em team semantics}, which was introduced by Hodges \cite{Hodges1997a,Hodges1997b}. The basic idea of team semantics is that dependency properties can only manifest themselves in {\em multitudes}.
Thus, formulas of these logics are evaluated under {\em teams}, which, in the propositional context, are {\em sets} of valuations. In particular, a propositional inclusion atom $p\subseteq q$ is said to be true in a team $X$, if every truth value of $p$ in $X$ occurs as a truth value of $q$; in other words, values of $p$ are ``included'' in the values of $q$.

In logics based on team semantics, also called {\em team(-based) logics}, two closure properties are of particular interest: the {\em downwards closure} and the {\em union closure} property.
Dependence logic is {\em closed downwards}, meaning that the truth of a formula on a team  is preserved under taking subteams. In this paper, we focus on propositional team-based logics that are  {\em closed under unions}, meaning that if two teams both satisfy a formula, then their set-theoretic union also satisfies the  formula. Inclusion logic is the first studied union closed team logic \cite{Pietro_I/E}. First-order dependence and inclusion logic can  be translated into existential second-order logic (\textsf{ESO}) \cite{Van07dl,Pietro_I/E}. More precisely, first-order dependence logic characterizes all downwards closed \textsf{ESO}-team properties \cite{KontinenVaa2009}, whereas some but not all union closed \textsf{ESO}-team properties are definable in first-order inclusion logic \cite{inclusion_logic_GH}. While both dependence and inclusion logic are strictly weaker than \textsf{ESO} on their own,  full \textsf{ESO} can be characterized by first-order logic extended with both the downwards closed dependence atoms and the union closed inclusion atoms \cite{Pietro_I/E}. 

Interestingly, the complementary feature of downwards closure and union closure 
properties is also  found on a more basic level in team semantics: Team logics are often defined as conservative extensions of classical logic. The {\em conservativity} is described through the {\em flatness} property of classical formulas, which states that every classical formula is satisfied in a team $X$ if and only if every singleton team of a valuation  in $X$ satisfies the formula, or equivalently, every valuation in $X$ satisfies the formula in the sense of the usual semantics. The property of flatness can actually be decomposed into downwards closure and union closure, in the sense that a formula is flat if and only if it is both downwards closed and union closed (see Fact \ref{flat=0+dc+uc}), assuming that the empty team satisfies the formula (which is often the case for many typical team logics).

%

Understanding the properties of team logics with the downwards closure or union closure property is thus arguably a key theme in the research in team semantics. Compared with the relatively well-understood downwards closed team logics (particularly dependence logic), union closed team logics have received less attention in the literature.  
%
Many properties of inclusion logic and other union closed team logics have not yet been  well-explored, especially on the propositional level. On the first-order level, in 2013, first-order inclusion logic was shown by Galliani and Hella \cite{inclusion_logic_GH}  to be expressively equivalent to positive greatest fixed point logic and thus captures the complexity class \textsf{NP} over finite ordered structures. 
This breakthrough has sparked increasing  interests in  inclusion logic and union closed team logics in general in recent years.
For instance, model-checking games for first-order inclusion logic were developed in \cite{Gradel16,GradelHegselmann16}, first-order consequences of first-order inclusion logic were axiomatized in \cite{YangInc20}, computational complexity and syntactical fragments of first-order inclusion logic were investigated in \cite{Hierarchies_Ind_GHK,Hannula_inc15,HannulaHella19,HannulaKontinen15,Ronnholm_thsis},  a team-based first-order logic characterizing the union closed fragment of existential second-order logic was identified in \cite{HoelzelWilke20}, etc. As for propositional logic, some basic properties of propositional inclusion logic (\PInc) and other union closed team logics were discussed  in \cite{YangVaananen:17PT}. The results in \cite{YangVaananen:17PT} are, however, relatively preliminary, compared with the extensive account of propositional downwards closed team logics in the literature (e.g., \cite{InquiLog,VY_PD}).
There are some recent articles on the expressive power  and computational complexity properties of modal  inclusion logic \cite{HellaKuusistoMeierVirtema17,HellaKuusistoMeierVollmer19,HellaStumpf15} that also cover propositional inclusion logic, but only briefly as a special case. 
The aim of this paper is to provide a  more in-depth account for the logical properties of propositional union closed team logics, including \PInc and another two logics.

One of the two logics is obtained by extending classical propositional logic with a different type of union closed dependency atoms $\mathsf{p}\Upsilon \mathsf{q}$ (with $\mathsf{p},\mathsf{q}$ two sequences of propositional variables), called the {\em anonymity atoms}. These atoms were introduced  by Galliani in \cite{Pietro_thesis} under the name {\em  non-dependence atoms}, as they state an extreme case of the failure of the functional dependence between $\mathsf{p}$ and $\mathsf{q}$. 
Recently V\"{a}\"{a}n\"{a}nen \cite{Vaananen_anonymity19} advocated the anonymity atoms with the motivation of anonymity concerns in data safety (and hence the name): $\mathsf{p}\Upsilon \mathsf{q}$ means ``$\mathsf{p}$ is anonymized with respect to $\mathsf{q}$''. Anonymity atoms also correspond exactly to the {\em afunctional dependencies} studied in database theory (see e.g., \cite{DeBraParedaens82,BraParedaens83}).  A special case $\langle\rangle\Upsilon \mathsf{q}$ of anonymity atoms with the first component being the empty sequence $\langle\rangle$ deserves commenting. Such atoms, denoted also as $\idep(\mathsf{p})$, are also called {\em inconstancy atoms}, as they state  that $\mathsf{p}$ does not have a constant value in the team. First-order logic with inconstancy atoms is known to be equivalent to first-order logic over sentences \cite{Pietro_I/E}, and first-order logic with arbitrary anonymity atoms is equivalent to inclusion logic \cite{Pietro_thesis}. The propositional logic with these atoms (denoted as \PAm and \Picnst) have so far not been studied.

The other union closed team logic we consider is obtained by adding to classical propositional logic a new disjunction $\ror$, called the {\em relevant disjunction}, which  was introduced by R\"{o}nnholm  \cite{Ronnholm_thsis}  as a variant of the standard disjunction $\vee$  in  team semantics. The main difference between the two disjunctions is that a team can satisfy a disjunction $\phi\vee\psi$ when only one disjunct is satisfied, whereas the relevant disjunction $\phi\ror\psi$ requires both disjuncts to be satisfied in a non-void manner (and thus both disjuncts are actually ``relevant"). 
The relevant disjunction is studied in the literature also under the name {\em nonempty disjunction} (e.g., \cite{HellaStumpf15,YangVaananen:17PT}). In \cite{HellaStumpf15}, classical modal logic with $\ror$ was shown to be expressively complete. 

The starting point of this paper is the work in \cite{YangVaananen:17PT}, where classical propositional logic with relevant disjunction (\PU) was shown to be expressively complete. 
Building on the arguments in \cite{YangVaananen:17PT}, we prove in this paper that \PAm as well as \Picnst are expressively complete too. It follows essentially from the argument of \cite{HellaStumpf15} in the context of modal logic that propositional inclusion logic \PInc (with actually a weaker version of inclusion atoms than in \cite{HellaStumpf15}) 
is also expressively complete. 
All of the these union closed logics are thus also expressive equivalent, and they all admit certain disjunctive normal form.

We also provide axiomatizations for \PInc and \PU, which are lacking in the literature. We introduce sound and complete natural deduction systems  for these logics. As with other team logics, these systems do not admit uniform substitution. 
The completeness theorem 
are proved by using the disjunctive normal form of the logics.

In union closed team logics, a metalogical property, the {\em locality} property, deserves particular attention. Locality states that the truth of a formula does not depend on the variables that do not occur in the formula. While this property is often taken for granted in most familiar logics, it is actually a very non-trivial property for team logics, especially for union closed team logics. For example, first-order inclusion logic with the so-called {\em strict semantics} does not satisfy locality \cite{Pietro_I/E}. We give  examples to show that under strict semantics, locality fails for propositional inclusion logic \PInc, as well as \PU and \PAm too. We also discuss a subtle connection between locality and interpolation. It follows from the work of D'Agostino \cite{DAgostino19} in the  modal team logics context that all expressively complete union closed team logics enjoy uniform interpolation. We highlight the subtle and crucial role that locality plays in the poof of \cite{DAgostino19}, and also give an example to illustrate the failure of Craig's interpolation in a fragment of \PU under strict semantics that does not satisfy the locality property.

This paper is organized as follows. In Section 2, we recall the basics of team semantics and define the propositional union closed  team logics we consider in the paper. In Section 3, we show that  these  logics are expressively complete (some of the proofs are already known), and obtain a disjunctive normal form for the logics. Making essential use of this disjunctive normal form, in Section 4, we axiomatize \PU and \PInc as well as a fragment \PIncz of \PInc.  In Section 5, we revisit the property of locality, and highlight its connection with interpolation.
%
%
%
 We conclude and discuss further directions in Section 6.



\section{Preliminaries}\label{sec:pre}

All of the logics we consider in this paper are extensions of classical propositional logic, defined in the team semantics setting. 
Let us now start by recalling the syntax for {\em classical propositional logic} (\CPL). Fix a set $\textsf{Prop}$ of propositional variables. The set of well-formed formulas of \CPL (called {\em classical formulas}) 
are given by the grammar:
\[
    \alpha::= \,p\mid  \bot\mid  \top\mid\neg \alpha\mid (\alpha\wedge\alpha)\mid(\alpha\sor\alpha)
\] 
where $p\in \textsf{Prop}$, and $\bot$ and $\top$ are two constants, called {\em falsum} and  {\em versum}, respectively.
Throughout the paper we reserve the first Greek letters $\alpha,\beta,\gamma,...$ for classical formulas.
As usual, 
 we write $\alpha\to\beta:=\neg\alpha\vee\beta$ and $\alpha\leftrightarrow\beta:=(\alpha\to\beta)\wedge(\beta\to\alpha)$.
We write $ \textsf{Prop}(\alpha)$ for the set of propositional variables occurring in  $\alpha$. We also use the notation $\alpha(\mathsf{N})$ (with $\mathsf{N}\subseteq \textsf{Prop}$ a set of propositional variables) to indicate that the propositional variables occurring in $\alpha$ are among $\mathsf{N}$.

\begin{table}[t]
\begin{center}
\begin{tabular}{|c|c|c|}\hline
&$p$&$q$\\\hline
$v_1$&$0$&$1$\\
$v_2$&$1$&$0$\\
$v_3$&$1$&$1$\\
\hline
\end{tabular}
\end{center}
\caption{A team $X$ with $\textsf{dom}(X)=\{p,q\}$}
\label{tbl:team} 
\end{table}%

Let $\textsf{N}=\{p_1,\dots,p_n\}\subseteq \textsf{Prop}$ be a set of propositional variables. In the standard semantics for \CPL, a classical formula $\alpha(\mathsf{N})$ is evaluated under {\em valuations}, which are functions $v:\mathsf{N}\cup\{\bot,\top\}\to\{0,1\}$ such that $v(\bot)=0$ and $v(\top)=1$. Recall that a valuation $v$ extends naturally to all formulas $\alpha$ of \CPL, and we write $v\models\alpha$ if $v(\alpha)=1$.
In this paper, we adopt {\em team semantics} for classical formulas, in which a classical formula is evaluated under teams. An {\em (\textsf{N}-)team} $X$ is a set of valuations $v:\mathsf{N}\cup\{\bot,\top\}\to\{0,1\}$ with $v(\bot)=0$ and $v(\top)=1$. The set \textsf{N} is called the {\em domain} of the team $X$, denoted as $\textsf{dom}(X)$. 
In particular, the empty set $\emptyset$ is a team (of an arbitrary domain). We often represent a team as a table. For example, \Cref{tbl:team} represents a team $X=\{v_1,v_2\}$ with $\textsf{dom}(X)=\{p,q\}$ consisting of two valuations $v_1$ and $v_2$, defined as
\[v_1(p)=0,~v_1(q)=1,~v_2(p)=1,~v_2(q)=0,\text{ and }v_3(p)=v_3(q)=1.\]

The notion of a classical formula $\alpha$  being \emph{true} on a team $X$ with $\textsf{dom}(X)\supseteq \textsf{Prop}(\alpha)$, denoted by $X\models\alpha$, is defined inductively as follows:
\begin{itemize}
\item $X\models p$ ~~iff~~ for all $v\in X$,  $v(p)=1$.
\item $X\models \bot$ ~~iff~~ $X=\emptyset$.
\item $X\models \top$ always holds.
\item $X\models \neg \alpha$ ~~iff~~ for all $v\in X$,  $\{v\}\not\models\alpha$.
\item $X\models\alpha\wedge\beta$ ~~iff~~ $X\models\alpha$ and $X\models\beta$.
\item $X\models\alpha\sor\beta$ ~~iff~~  there exist subteams $Y,Z\subseteq X$ such that $X=Y\cup Z$, 
\(Y\models\alpha\text{ and }Z\models\beta.\)
\end{itemize}
For any set $\Gamma\cup\{\alpha\}$ of formulas, we write $\Gamma\models\alpha$ if for all teams $X$ with $\textsf{dom}(X)\supseteq \bigcup_{\gamma\in\Gamma}\textsf{Prop}(\gamma)\cup\textsf{Prop}(\alpha)$, $X\models\gamma$ for all $\gamma\in\Gamma$ implies $X\models\alpha$. We write simply $\alpha\models\beta$ for $\{\alpha\}\models\beta$. If both $\alpha\models\beta$ and $\beta\models\alpha$, we write $\alpha\equiv\beta$ and say that $\alpha$ and $\beta$ are {\em semantically equivalent}.

It is easy to verify (by a straightforward induction) that \CPL-formulas have the  {\em locality property}, {\em empty team property}, {\em union closure property} and {\em downwards closure property}, that is, for any \CPL-formula $\alpha$, the following holds:
%
%
\begin{description}
\item[Empty Team Property:] $\emptyset\models\alpha$ holds; 
\item[Union Closure:] $X\models\alpha$ and $Y\models\alpha$ imply $X\cup Y\models\alpha$;
\item[Downwards Closure:] $X\models\alpha$ and $Y\subseteq X$ imply $Y\models\alpha$.
\end{description}
These three properties together are (easily shown to be) equivalent to the {\em flatness property}:
\begin{description}
\item[Flatness:] $X\models\alpha$ if and only if $\{v\}\models\alpha$ for all $v\in X$.
\end{description}

\begin{fact}\label{flat=0+dc+uc}
A formula is flat if and only it satisfies the empty team property and is both union closed and downwards closed.
\end{fact}

Moreover, an easy inductive proof shows that the truth of a classical formula $\alpha$ on singleton teams $\{v\}$ coincides with its truth  on the single valuations $v$ in the usual sense, namely,
\begin{equation}
\{v\}\models\alpha\text{ if and only if }v\models\alpha.
\end{equation}
Putting these observations together, we obtain the following fact, which will also serve as our working team semantics for classical formulas:
\begin{fact}\label{cpl_singleton_single}
For any classical formula $\alpha$, any team $X$ with $\textsf{dom}(X)\supseteq \textsf{Prop}(\alpha)$,
\[X\models\alpha~~\text{ iff }~~v\models\alpha\text{ for all }v\in X.\]
\end{fact}

Intuitively, the above fact (or essentially the flatness of classical formulas) means that the team semantics for classical formulas collapses to the usual (single-valuation) semantics. In this sense, team semantics is conservative over classical formulas. Such a conservativity as reflected in \Cref{cpl_singleton_single} also justifies our definition of the team semantics for classical formulas. For instance, our semantic clause ``$X\models\alpha\vee\beta$'' for a disjunction being true {\em on the team level} states that the team $X$ can be split into two subteams $Y$ and $Z$ such that each disjunct is true in one of the two subteams. This (by \Cref{cpl_singleton_single}) is the same as stating that the disjunction $\alpha\vee\beta$ is true {\em locally} under every valuation $v$ in $X$, that is, every valuation $v$ in $X$ satisfies at least one of the disjuncts, resulting in a natural split of the team $X$ into two subteams $Y$ and $Z$ with every element in $Y$ making $\alpha$ true and every element in $Z$ making $\beta$ true. By \Cref{cpl_singleton_single} again, this means that $\alpha$ is true at $Y$ on the team level, and $\beta$ is true at $Z$ on the team level.



Recall that for classical formulas under the usual (single-valuation) semantics, when a formula $\alpha$ is evaluated on a valuation $v$, the truth of $\alpha$ depends only on how the valuation $v$ evaluates the propositional variables that actually occurs in $\alpha$, and it is independent of how the other propositional variables are evaluated. That is, for any formula $\alpha(\mathsf{N})$, if $v$ and $u$ are two valuations with $v\upharpoonright \mathsf{N}=u\upharpoonright \mathsf{N}$, then $v\models\alpha$ iff $u\models\alpha$. 
This is called the {\em locality property}. While this property is often taken for granted for most logics with the usual semantics, as we will discuss in this paper, in the team semantics setting, the locality property is a nontrivial property. We now give the definition of the property in the team semantics setting.


\begin{definition}
Let $\mathsf{N}\subseteq\mathsf{Prop}$ be a set of propositional variables. A formula $\phi(\textsf{N})$ is said to satisfy the {\em locality property} if for any  teams $X$ and $Y$ such that $\textsf{dom}(X),\textsf{dom}(Y)\supseteq\mathsf{N}$ and $X\upharpoonright \mathsf{N}=Y\upharpoonright \mathsf{N}$, it holds that
 \[X\models\phi(\textsf{N})\iff Y\models\phi(\textsf{N}),\]
 where for any team $Z$ with $\mathsf{dom}(Z)\supseteq\mathsf{N}$, we write
\(Z\upharpoonright\mathsf{N}=\{v\upharpoonright \mathsf{N}\mid v\in Z\}.\)
\end{definition}

Thanks to the flatness of classical formulas, we can, nevertheless, obtain the locality property for classical formulas as an immediate corollary of \Cref{cpl_singleton_single}.
\begin{proposition}
Classical formulas satisfy the locality property.
\end{proposition}
\begin{proof}
Let $\alpha(\textsf{N})$ be a classical formula. For any teams $X$ and $Y$ with $\textsf{dom}(X),\textsf{dom}(Y)\supseteq\mathsf{N}$ and $X\upharpoonright \mathsf{N}=Y\upharpoonright \mathsf{N}$, by \Cref{cpl_singleton_single} and the locality property of classical formulas under the usual (single-valuation) semantics, we have that
\[X\models\alpha\iff \forall v\in X: v\models\alpha(\mathsf{N})\iff  \forall u\in Y: u\models\alpha(\mathsf{N})\iff Y\models\alpha.\]
\end{proof}


We now extend \CPL to three non-flat but union closed team-based logics. Consider a new disjunction $\ror$, called {\em relevant disjunction}, 
and  atomic formulas of the form $a_1\dots a_k\subseteq b_1\dots b_k$ with each $a_i,b_i\in \textsf{Prop}\cup\{\bot,\top\}$, called {\em inclusion atoms}, and of the form $p_{1}\dots p_{k}\Upsilon q_1\dots q_m$ with each $p_i,q_j\in\textsf{Prop}$, called {\em anonymity atoms}.  Inclusion and anonymity atoms are often represented as $\mathsf{a}\subseteq\mathsf{b}$ and $\mathsf{p}\Upsilon\mathsf{q}$ with letters $\mathsf{a},\mathsf{b},\mathsf{p},\mathsf{q}$ in serif font standing for sequences of propositional variables or constants of certain lengths. 
Define the  logic \PUw as the extension of \CPL  by adding relevant disjunction $\ror$, and negation $\neg$ is allowed to occur only in front of classical formulas, that is, formulas of \PUw are formed by the grammar:
\[
    \phi::= \,p\mid \bot\mid\top\mid\neg \alpha\mid (\phi\wedge\phi)\mid(\phi\sor\phi)\mid(\phi\ror\phi)   
\] 
where $\alpha$ stands for an arbitrary classical formula. Similarly for the extensions 
\PInc and \PAm of \CPL obtained by adding the inclusion atoms  $\mathsf{a}\subseteq\mathsf{b}$ and anonymity atoms $\mathsf{p}\Upsilon\mathsf{q}$, respectively, where, again, negation $\neg$ is allowed to occur only in front of classical formulas.

Define the team semantics of the new connective and atoms as follows:
\begin{itemize}
\item $X\models\phi\ror\psi$ ~~iff~~ $X=\emptyset$ or there exist nonempty subteams $Y,Z\subseteq X$ such that $X=Y\cup Z$, $Y\models\phi$ and $Z\models\psi$.
\item $X\models \mathsf{a}\subseteq \mathsf{b}$ ~~iff~~  for all $v\in X$,  there exists $u\in X$ such that $v(\mathsf{a})=u(\mathsf{b})$.
\item $X\models \mathsf{p}\Upsilon \mathsf{q}$ ~~iff~~  for all $v\in X$, there exists $u\in X$ such that $v(\mathsf{p})=u(\mathsf{p})$ and $v(\mathsf{q})\neq u(\mathsf{q})$.
\end{itemize}

It is easy to see that formulas in \PUw, \PInc and \PAm  are not necessarily downwards closed, and thus  not necessarily flat. For instance, for the team $X$ from \Cref{tbl:team}, we have that $X\models p\ror q$. However, for the subteam $Y=\{v_2\}$ of $X$, we have that $Y\not\models p\ror q$, since no nonempty subteam of $Y$ makes $q$ true. The reader can also easily verify that for the team $Z=\{v_2,v_3\}$, we have $Z\models p\subseteq q$ and $Z\models p\Upsilon q$, whereas for the subteam $Y=\{v_2\}$ of $Z$, we have $Y\not\models p\subseteq q$ and $Y\not\models p\Upsilon q$. On the other hand, formulas in all these three logics satisfy the union closure property, as well as the empty team property and the locality property.

\begin{lemma}\label{empty_prop_locality_proof}
Formulas in the logics \PUw, \PInc and \PAm satisfy the empty team property, the union closure property and the locality property. 
\end{lemma}
\begin{proof}
The lemma is proved by a straightforward induction on the complexity of formulas $\phi$ in the logics. We only give the proof details for the logic \PUw and for the case $\phi=\psi\ror\chi$. By definition, $\emptyset\models\psi\ror\chi$ trivially holds. For union closure, suppose $X\models\psi\ror\chi$ and $Y\models\psi\ror\chi$. If $X=\emptyset$ or $Y=\emptyset$, then $X\cup Y\models\psi\ror\chi$ trivially holds. Assume now $X,Y\neq\emptyset$. Then there are nonempty teams $X_0,X_1\subseteq X$ and $Y_0,Y_1\subseteq Y$ such that $X=X_0\cup X_1$, $Y=Y_0\cup Y_1$, $X_0\models\psi$, $Y_0\models\psi$, $X_1\models\chi$ and $Y_1\models\chi$. By induction hypothesis, we obtain $X_0\cup Y_0\models\psi$ and $X_1\cup Y_1\models\chi$. Clearly, $X_0\cup Y_0,X_1\cup Y_1\neq\emptyset$ and $X\cup Y=(X_0\cup X_1)\cup(Y_0\cup Y_1)=(X_0\cup Y_0)\cup (X_1\cup Y_1)$. Hence, we conclude that $X\cup Y\models\psi\ror\chi$.

Lastly, we verify the locality property for $\phi(\mathsf{N})=\psi\ror\chi$. Suppose $X,Y$ are teams with $\mathsf{dom}(X),\mathsf{dom}(Y)\supseteq \mathsf{N}$ and $X\upharpoonright \mathsf{N}=Y\upharpoonright \mathsf{N}$. If $X=\emptyset$, then $Y=\emptyset$, and we have $X\models\psi\ror\chi$ iff $Y\models\psi\ror\chi$ by the empty team property. Now assume that $X\neq\emptyset$, and thus $Y\neq\emptyset$. If $X\models\psi\ror\chi$, then there exist nonempty teams $X_0,X_1\subseteq X$ such that $X=X_0\cup X_1$, $X_0\models\psi$ and $X_1\models\chi$. Consider 
\[Y_0=\{v\in Y\mid v\upharpoonright \mathsf{N}\in X_0\upharpoonright \mathsf{N}\}\text{ and }Y_1=\{v\in Y\mid v\upharpoonright \mathsf{N}\in X_1\upharpoonright \mathsf{N}\}.\]
Since $X_0,X_1\neq \emptyset$ and $X\upharpoonright \mathsf{N}=Y\upharpoonright \mathsf{N}$, we must have that $Y_0,Y_1\neq\emptyset$. It is also not hard to verify that 
\begin{equation}\label{locality_thm_eq1}
Y_0\upharpoonright \mathsf{N}=X_0\upharpoonright \mathsf{N}, ~~Y_1\upharpoonright \mathsf{N}=X_1\upharpoonright \mathsf{N},~\text{ and }Y_0\cup Y_1=Y.
\end{equation}
Now, by induction hypothesis and the first two equations of (\ref{locality_thm_eq1}), we obtain $Y_0\models \psi(\mathsf{N})$ and $Y_1\models\chi(\mathsf{N})$. Finally,  the last equation of (\ref{locality_thm_eq1}) then gives $Y\models\psi\ror\chi$.
\end{proof}

Note the similarity and difference between the semantic clauses of $\sor$ and $\ror$: They both state that the team in question can be split into two subteams, each satisfying one of the disjuncts. The relevant disjunction $\ror$ requires in addition that the two subteams must be nonempty, as long as the starting team is nonempty. In particular, when applied to classical formulas, a (flat) disjunction $\alpha\vee\beta$ being true on a team $X$ means that either disjunct is true on each valuation $v$ in the team $X$ {\em locally}, while a relevant disjunction $\alpha\ror\beta$ being true on the same team $X$ requires, in addition to the local truth of the disjuncts, also that each disjunct is actually true on some valuations (and thus both disjuncts are considered ``relevant" for the truth of the disjunction).
For an illustration of the two different disjunctions, consider the following two sentences in natural language:
\begin{quotation}
{\em (a) The coin lands heads or tails. }

{\em (b) Either the moon is made of green cheese or it rained today.}
\end{quotation}
Consider also the team $X_a$ of the records of certain times of coin tossing, and the team $X_b$ of the meteorological and astronomical reports of a certain year. If we interpret the two disjunctions in (a) and (b) using the disjunction $\vee$ from classical logic (i.e., (a) is understood as $h\vee t$ and (b) as $m\vee r$), the two sentences are then both true in the relevant teams $X_a$ and $X_b$. If we interpret (a) and (b) using the relevant disjunction, as $h\ror t$ and $m\ror r$ instead, then $m\ror r$ will fail in the team $X_b$, and $h\ror t$ is possible to fail in the team $X_a$ if the coin is a bias one.  

Closely related is another disjunction that we shall call the {\em global disjunction} (also known in the literature by the name  {\em intuitionistic disjunction} or {\em Boolean disjunction} or {\em classical disjunction}), defined as
\begin{itemize}
\item $X\models\phi\vvee\psi$ ~~iff~~ $X\models\phi$ or $X\models\psi$.
\end{itemize}
The global disjunction states that either disjunct is true {\em globally} in the team in question.
It is easy to verify that $\phi\vee\psi\equiv\phi\vvee\psi\vvee(\phi\ror\psi)$. The global disjunction does not, however, preserves union closure, as e.g., $p\vvee q$ is clearly not closed under unions. We thus do not consider the global disjunction in this paper. 

Another related logical constant is the unary operator $\might$, called the {\em might modality}, whose team semantics is defined as
\begin{itemize}
\item $X\models\might\phi$ ~~iff~~  $X=\emptyset$ or there exists a nonempty subteam $Y\subseteq X$ such that $Y\models\phi$.
\end{itemize}
It is easy to verify that the might modality preserves union closure.
It was also observed in  \cite{HellaStumpf15} that the relevant disjunction $\ror$ and the might operator $\might$ are inter-definable, as
\(\might\phi\equiv \phi\ror\top\text{ and }\phi\ror\psi\equiv (\phi\sor\psi)\wedge \might\phi\wedge\might\psi. \)
We say that a team-based logic $\LL_1$ is expressively weaker than another team-based logic $\LL_2$, denoted as $\LL_1\leq \LL_2$, if  for every $\LL_1$-formula $\phi$, there exists an $\LL_2$-formula $\psi$ such that $\phi\equiv\psi$. If both $\LL_1\leq \LL_2$ and $\LL_2\leq \LL_1$, then we write $\LL_1\equiv \LL_2$ and say that $\LL_1$ and $\LL_2$ are  {\em expressively equivalent}. Clearly,  $\PU\equiv\PML$ for  the extension \PML of \CPL with the unary might modality $\might$. In this paper we are more interested in binary connectives and atoms;  the might modality is left for future research.

An inclusion atom $\mathsf{a}\subseteq\mathsf{b}$ as we defined can take the two atoms $\bot$ and $\top$ as arguments. It thus has a more relaxed syntax than the standard one in the literature where the arguments $a_i,b_i$ can only be propositional variables. Let us  point out that the standard version of inclusion logic \PInc with this standard syntax of inclusion atoms is strictly weaker in expressive power than our version of \PInc. To see why, consider our inclusion atom $\top\subseteq p$ in one variable. 
To express this inclusion atom in the standard version of \PInc, by the locality property, it is sufficient to consider formulas in the only variable $p$. Modulo equivalence, the only such classical formulas are $\top,\bot,p,\neg p$, and the only inclusion atom with merely the propositional variable $p$ is $p\subseteq p$, which is equivalent to $\top$. All these formulas are flat, and thus are not equivalent to the non-flat inclusion atom $\top\subseteq p$. We will show in this paper that our version of \PInc is actually expressively complete. Our proof applies essentially the same argument as in \cite{HellaStumpf15} in the context of modal inclusion logic, which has an even more relaxed syntax for inclusion atoms $\mathsf{a}\subseteq\mathsf{b}$, for which the arguments $a_i,b_i$ are allowed to be arbitrary classical formulas. Such inclusion atoms are known in the literature as the {\em extended inclusion atom}. It follows from  \cite{HellaStumpf15} that propositional inclusion logic \PInc with extended inclusion atoms is expressively complete, and is thus expressively equivalent to our version of \PInc with the relatively less general inclusion atoms as we defined. Our choice of the syntax of \PInc, or of inclusion atoms, thus enables us to obtain an expressively complete logic with minimal modification to the standard syntax of \PInc.

The anonymity atom $\mathsf{p}\Upsilon\mathsf{q}$, also known as {\em non-dependence atom}, states an extreme case of the negation of the functional dependence between $\mathsf{p}$ and $\mathsf{q}$: For {\em every} valuation $v$ in the team $X$ in question, there is a witness $u$ in $X$ with respect to $v$ that witnesses the failure of the said functional dependence. Recently V\"{a}\"{a}n\"{a}nen \cite{Vaananen_anonymity19} also used this atom to express the property ``$\mathsf{p}$ is anonymized with respect to $\mathsf{q}$'', and hence the name.
Such defined anonymity atoms $\mathsf{p}\Upsilon\mathsf{q}$ also corresponds exactly to the {\em afunctional dependencies} studied in database theory (see e.g., \cite{DeBraParedaens82,BraParedaens83}). 
We write $\idep(\mathsf{p})$ for the anonymity atom $\langle\rangle\Upsilon \mathsf{p}$ whose left component is the empty sequence $\langle\rangle$,  and call such an atom {\em inconstancy atom}. Clearly, the semantics clause of the inconstancy atom $\idep(\mathsf{p})$ 
reduces to
\begin{itemize}
\item $X\models\idep(\mathsf{p})$ ~~iff~~ either $X=\emptyset$ or there exist $v,u\in X$ such that $v(\mathsf{p})\neq u(\mathsf{p})$.
\end{itemize}
Intuitively, $\idep(\mathsf{p})$ states that the sequence $\mathsf{p}$ of propositional variables does not have a constant value in the team in question. It is easy to verify that inconstancy atoms with multiple arguments are definable in terms of those with single arguments:
\[\idep(p_1\dots p_n)\equiv\idep(p_1)\vee\dots\vee\idep(p_n).\]
In addition, inconstancy atoms with single arguments are  easily definable in terms of relevant disjunction: $\idep(p)\equiv p\ror\neg p$.
 
 Recall that an atom of a dual flavor is the {\em constancy atom} $\dep(p)$ which states that $p$ has a constant value in the team:
\begin{itemize}
\item $X\models\dep(p)$ ~~iff~~ for all $v,u\in X$, $v(p)=u(p)$.
\end{itemize}
Constancy atoms are clearly downwards closed. Dually, inconstancy atoms $\lambda$ are clearly {\em upwards closed}, meaning that 
$X\models\lambda$ and $Y\supseteq X$ imply $Y\models\lambda$.
Upwards closure clearly implies union closure. 

We call the inclusion atoms $\mathsf{x}\subseteq\mathsf{a}$ with  $x_i\in\{\bot,\top\}$  for each $i$ {\em primitive inclusion atoms}. For instance, $\top\bot\subseteq pq$ and $\bot \top\subseteq \top p$ are primitive inclusion atoms, whereas $p\subseteq q$, $q\subseteq\top$ are not.
Interestingly, primitive inclusion atoms are also upwards closed. Denote by \Picnst and \PIncz the logics extended from \CPL by adding, respectively, inconstancy atoms with single arguments and primitive inclusion atoms. Arbitrary formulas in these sublogics of \PInc and \PAm are, however, not in general upwards closed, as, e.g.,  already the propositional variable $p$ is not upward closed.



\section{Expressive Completeness and normal form}\label{sec:expr_comp}

In this section, we study the expressive power of the logics \PUw, \PInc, \PIncz, \PAm and \Picnst we introduced. The logic $\PUw$ was proved in \cite{YangVaananen:17PT} to be expressively complete. Building on this result, we show that the logics \Picnst and \PAm are also expressively complete. It was proved in \cite{HellaStumpf15} that modal inclusion logic with extended inclusion atoms (which allow arbitrary classical formulas to occur as arguments) is expressively complete, from which it follows essentially that propositional inclusion logic  \PInc and its variant \PIncz are also expressively complete. We recast here the argument of \cite{HellaStumpf15} in our propositional setting to give a detailed proof of the expressive completeness of \PIncz and \PInc. As an immediate corollary, all these five logics are thus expressively equivalent. From the proof of these expressive completeness results, we will also obtain normal forms for formulas in these logics. The (disjunctive) normal form of \PUw has already been introduced in \cite{YangVaananen:17PT}. We will discuss the normal forms for the other logics, as well as normal forms for inclusion atoms and anonymity atoms. The normal forms of these logics will play a crucial role in the axiomatization of the logics in Section 4. The expressive completeness is also required for obtaining the interpolation theorem to be discussed in Section 5.



Let us start by giving formal definitions of the relevant terminologies. A {\em team property} $\mathsf{P}$ is a set of teams over certain domain $\mathsf{N}\subseteq \mathsf{Prop}$. For any formula $\phi(\mathsf{N})$ in the language of any of the above logics, the set 
\[\llbracket\phi\rrbracket_{\mathsf{N}}=\{X\subseteq2^\mathsf{N}: X\models\phi\}\]
of $\mathsf{N}$-teams that satisfy $\phi(\mathsf{N})$ is a team property (over $\mathsf{N}$), where $2^\mathsf{N}$ stands for the set of all valuations $v:\mathsf{N}\cup\{\bot,\top\}\to 2=\{0,1\}$. Clearly, by locality, for any two formulas $\phi(\mathsf{N})$ and $\psi(\mathsf{N})$, $\llbracket\phi\rrbracket_{\mathsf{N}}=\llbracket\psi\rrbracket_{\mathsf{N}}$ implies $\phi\equiv\psi$. For any formula $\phi$ in any of the five union closed logics we consider (i.e., \PUw, \PInc, \PIncz, \PAm and \Picnst), the set $\llbracket\phi\rrbracket_{\mathsf{N}}$ clearly contains the empty team $\emptyset$, and is closed under unions, i.e., $X,Y\in \llbracket\phi\rrbracket_{\mathsf{N}}$ implies $X\cup Y\in \llbracket\phi\rrbracket_{\mathsf{N}}$. 
Let $\mathbb{P}$ be a collection of team properties over some domains. 
For $\mathsf{N}\subseteq \textsf{Prop}$, we write
\[\mathbb{P}_{\mathsf{N}}=\{\mathsf{P}\in \mathbb{P}: \mathsf{P}\text{ is a set of $\mathsf{N}$-teams}\}\]
for the class of team properties over $\mathsf{N}$ that are in $\mathbb{P}$. We are now ready to give the definition for the notion of expressive completeness.
\begin{definition}[expressive completeness]\label{expressive_comp_df}
  We say that a team-based logic \LL \emph{characterizes}  $\mathbb{P}$, or \LL is \emph{expressively complete} in $\mathbb{P}$, if for every set $\mathsf{N}\subseteq \textsf{Prop}$ of propositional variables, 
\[\mathbb{P}_{\mathsf{N}}=\{\llbracket \phi\rrbracket_{\mathsf{N}}:~\phi\text{ is an \LL-formula with }\textsf{Prop}(\phi)=\mathsf{N}\}.\] 
\end{definition}

That is, if \LL is expressively complete in $\mathbb{P}$, then  for every $\mathsf{N}$, every \LL-formula $\phi(\mathsf{N})$ defines a team property $\llbracket \phi\rrbracket_{\mathsf{N}}$ that belongs to $\mathbb{P}_{\mathsf{N}}\subseteq\mathbb{P}$, and conversely, every team property $\mathsf{P}\in \mathbb{P}_{\mathsf{N}}$ is definable by some formula $\phi(\mathsf{N})$  in \LL. Clearly, if both $\LL_1$ and $\LL_2$ are expressively complete in some class $\mathbb{P}$, then $\LL_1$ and $\LL_2$ are expressively equivalent, i.e., $\LL_1\equiv\LL_2$, since for every $\LL_1$-formula $\phi(\mathsf{N})$, the team property  $\llbracket \phi\rrbracket_{\mathsf{N}}\in \mathbb{P}_{\mathsf{N}}$ is definable by some $\LL_2$-formula $\psi(\mathsf{N})$, namely $\llbracket \phi\rrbracket_{\mathsf{N}}=\llbracket \psi\rrbracket_{\mathsf{N}}$ or $\phi\equiv\psi$; and vice versa.



Let $\mathbb{F}$ denote the collection of all flat team properties (i.e., properties $\mathsf{P}$ satisfying $X\in \mathsf{P}$ iff $\{v\}\in \mathsf{P}$ for all $v\in X$). It was proved in \cite{YangVaananen:17PT} that classical propositional logic \CPL is expressively complete in $\mathbb{F}$. We now recall briefly also the proof of this fact from \cite{YangVaananen:17PT}, as a crucial formula $\Theta_X$ used in this proof will play an important role in the main theorem (\Cref{PT_exp_pw}) in this section.



\begin{theorem}[\cite{YangVaananen:17PT}]\label{cpl_excmp}
\CPL is expressively complete in $\mathbb{F}$.
\end{theorem}
\begin{proof}
We only give a sketch of the proof. For any \CPL-formula $\alpha(\mathsf{N})$, since $\alpha$ is flat, we have $\llbracket \alpha\rrbracket_{\mathsf{N}}\in\mathbb{F}$. Conversely, for any $\mathsf{N}$-team property $\mathsf{P}\in \mathbb{F}_{\mathsf{N}}$, putting $X=\bigcup \mathsf{P}$, we show that $\llbracket \Theta_X\rrbracket_{\mathsf{N}}=\mathsf{P}$, where 
\[\Theta_X:=\bigsor_{v\in X}(p_{1}^{v(1)}\wedge\dots\wedge p_{n}^{v(n)})\] 
with $v(i)$  short for $v(p_i)$, $p_i^1:=p_i$, $p_i^0=\neg p_i$ and $\bigvee\emptyset:=\bot$.  Indeed, each disjunct in the formula $\Theta_X$ clearly defines a valuation $v$ in the team $X$, in the sense that for any $\mathsf{N}$-team $Y$, 
\begin{equation}\label{cpl_excmp_eq1}
Y\models p_{1}^{v(1)}\wedge\dots\wedge p_{n}^{v(n)} \iff Y\subseteq\{v\}.
\end{equation}
It then follows that for any $\mathsf{N}$-team $Y$, 
\begin{equation}\label{cpl_excmp_eq2}
Y\models \Theta_X\iff Y=\bigcup_{v\in X}Y_v\text{ for some }Y_v\subseteq\{v\} \iff Y\subseteq X.
\end{equation}
Thus, since $\mathsf{P}$ is flat, we have  $Y\models\Theta_X$ iff $Y\subseteq X= \bigcup \mathsf{P}\in \mathsf{P}$ iff $Y\in\mathsf{P}$.
\end{proof}

The expressive completeness of \CPL immediately implies the following characterization of classical formulas in the logics we consider in this paper.

\begin{corollary}\label{flat_char}
A  formula of any of the  logics \PUw, \PInc, \PIncz, \PAm and \Picnst is flat iff it is equivalent to a classical formula.
\end{corollary}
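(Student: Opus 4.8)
The plan is to prove Corollary~\ref{flat_char} as a direct consequence of the already-cited fact that \CPL is expressively complete in the collection of all flat team properties. The statement is a biconditional, so I would prove each direction separately. The right-to-left direction is the easy one: if $\phi$ is equivalent to a classical formula $\alpha$, then since all \CPL-formulas are flat (as verified in the Preliminaries), and flatness is clearly invariant under semantic equivalence, $\phi$ is flat as well. This requires only the observation that the flatness property (stated as $X\models\alpha$ iff $\{v\}\models\alpha$ for all $v\in X$) is preserved when one passes to a semantically equivalent formula, which is immediate from the definition of $\equiv$.

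For the left-to-right direction, suppose $\phi$ is a formula of one of the union closed logics and is flat. Let $\mathsf{N}=\textsf{Prop}(\phi)$. The key move is to consider the team property $\llbracket\phi\rrbracket_{\mathsf{N}}$. Because $\phi$ is flat, this property satisfies the defining condition of a flat team property, namely $X\in\llbracket\phi\rrbracket_{\mathsf{N}}$ iff $\{v\}\in\llbracket\phi\rrbracket_{\mathsf{N}}$ for all $v\in X$. Since \CPL is expressively complete in the collection of all flat team properties, there exists a classical formula $\alpha$ with $\textsf{Prop}(\alpha)=\mathsf{N}$ such that $\llbracket\alpha\rrbracket_{\mathsf{N}}=\llbracket\phi\rrbracket_{\mathsf{N}}$. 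This equality of team properties over the common domain $\mathsf{N}$ gives $X\models\phi\iff X\models\alpha$ for every $\mathsf{N}$-team $X$, which is exactly $\phi\equiv\alpha$.

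There is a subtlety worth addressing explicitly: equivalence $\phi\equiv\alpha$ as defined in the paper quantifies over all teams whose domain contains $\textsf{Prop}(\phi)\cup\textsf{Prop}(\alpha)=\mathsf{N}$, not merely over $\mathsf{N}$-teams. I would bridge this gap using the locality property, which both \CPL and the three union closed logics possess. Given any team $X$ with $\textsf{dom}(X)\supseteq\mathsf{N}$, locality yields $X\models\phi\iff X\rstr\mathsf{N}\models\phi$ and likewise for $\alpha$; since $X\rstr\mathsf{N}$ is an $\mathsf{N}$-team, the equality $\llbracket\alpha\rrbracket_{\mathsf{N}}=\llbracket\phi\rrbracket_{\mathsf{N}}$ then transfers to give $X\models\phi\iff X\models\alpha$ on all such $X$, establishing $\phi\equiv\alpha$ in the full sense.

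The main obstacle, such as it is, lies in correctly matching the domain conventions and in invoking the expressive completeness of \CPL with exactly the right formulation. The cited result characterizes \emph{all} flat team properties, and one must confirm that $\llbracket\phi\rrbracket_{\mathsf{N}}$ genuinely falls into this class purely on the strength of $\phi$'s flatness, independently of which extended logic $\phi$ belongs to. This is straightforward because the flatness condition on the property is literally a restatement of the flatness of $\phi$. Hence the corollary requires no new ideas beyond assembling the previously established expressive completeness of \CPL together with the shared locality property of the logics in question.
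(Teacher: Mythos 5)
Your proposal is correct and follows essentially the same route as the paper's proof: both directions are handled identically, with the left-to-right direction obtained by observing that $\llbracket\phi\rrbracket_{\mathsf{N}}$ is a flat team property and invoking the expressive completeness of \CPL in the class of flat properties. Your extra step bridging $\mathsf{N}$-teams to arbitrary teams via locality merely makes explicit what the paper leaves implicit in writing ``$\llbracket \phi\rrbracket_{\mathsf{N}}=\llbracket \alpha\rrbracket_{\mathsf{N}}$, i.e., $\phi\equiv\alpha$.''
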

\begin{proof}
The right to left direction is obvious. For the other direction, let $\phi(\mathsf{N})$ be a flat formula  in any of the five logics. The team property $\llbracket \phi\rrbracket_{\mathsf{N}}$  is clearly also flat. Then, by the expressive completeness of \CPL, there is a classical formula $\alpha(\mathsf{N})$ such that $\llbracket \phi\rrbracket_{\mathsf{N}}=\llbracket \alpha\rrbracket_{\mathsf{N}}$, which implies $\phi\equiv\alpha$ by locality.
\end{proof}


Denote by $\mathbb{P}^{\dot{\overline{\cup}}}$ the collection of all union closed team properties which contain the empty team. Clearly, for any formula $\phi(\mathsf{N})$ in the union closed team logics we introduced, $\llbracket \phi\rrbracket_{\mathsf{N}}\in \mathbb{P}^{\dot{\overline{\cup}}}$. We will see in this section that the other direction holds for all five union closed team logics we consider in the paper. First, let us recall that this result was proved for \PU already in \cite{YangVaananen:17PT}.

\begin{theorem}[\cite{YangVaananen:17PT}]\label{PU_excmp}
\PU is expressively complete in $\mathbb{P}^{\dot{\overline{\cup}}}$
\end{theorem}
\begin{proof}
See \cite{YangVaananen:17PT} for the detailed proof. We now give a sketch of the proof. 
It suffices to show that for every team property $\mathsf{P}\in \mathbb{P}^{\dot{\overline{\cup}}}_{\mathsf{N}}$, we can find a \PU formula $\phi(\mathsf{N})$ such that $\mathsf{P}=\llbracket\phi\rrbracket_{\mathsf{N}}$. 
For any team $X$ (i.e., a set of valuations $v$) with $\mathsf{dom}(X)=\mathsf{N}=\{p_1,\dots,p_n\}$, define  a \PU-formula 
\begin{equation}\label{eq_psi_x}
\Psi_X:=\bigror_{v\in X}(p_{1}^{v(1)}\wedge\dots\wedge p_{n}^{v(n)}),
\end{equation}
where, again, $v(i)$ is short for $v(p_i)$, $p_i^1:=p_i$, $p_i^0=\neg p_i$ and \,$\bigroril\emptyset:=\bot$. 
Since (\ref{cpl_excmp_eq1}) holds for each $\ror$-disjunct in the formula $\Psi_X$, the formula $\Psi_X$  characterizes the team $X$ modulo the empty team, in the sense that  for any $\mathsf{N}$-team $Y$
\begin{equation}\label{eq_psi_x_prop}
Y\models\Psi_X\iff Y=\emptyset\text{, or }Y=\bigcup_{v\in X}Y_v\text{ and }Y_v=\{v\}\iff Y=\emptyset\text{ or }Y= X.
\end{equation}
Finally, we show $\mathsf{P}=\llbracket\bigsor_{X\in \mathsf{P}}\Psi_X\rrbracket_{\mathsf{N}}$. The inclusion ``$\subseteq$'' is immediate. For the other inclusion ``$\supseteq$", for any $\mathsf{N}$-team $Y$, if $Y\models\bigsor_{X\in \mathsf{P}}\Psi_X$, then $Y=\bigcup_{X\in \mathsf{P}}Y_X$ for some $Y_X$ satisfying $Y_X\models \Psi_X$. It follows from (\ref{eq_psi_x_prop}) that each $Y_X=X$ or $Y_X=\emptyset$. Thus, $Y=\bigcup\mathsf{P}'$ for some (possibly empty) subclass $\mathsf{P}'\subseteq\mathsf{P}$. Now, $Y\in \mathsf{P}$ follows from the fact that $\mathsf{P}$ contains the empty team $\emptyset$ and is closed under unions.
\end{proof}

The proof of the above theorem gives rise to a disjunctive normal form for formulas in \PU.

\begin{corollary}[\cite{YangVaananen:17PT}, Normal Form]\label{nf-pu}
Every \PU formula $\phi(\mathsf{N})$ is equivalent to a formula for the form $\bigvee_{X\in \mathcal{X}}\Psi_X$ for some collection $\mathcal{X}$ of $\mathsf{N}$-teams.
\end{corollary}
\begin{proof}
Put $\mathsf{P}=\llbracket\phi\rrbracket_{\mathsf{N}}$. Since $\mathsf{P}\in\mathbb{P}^{\dot{\overline{\cup}}}_{\mathsf{N}}$, by the proof of Theorem \ref{PU_excmp}, we have $\llbracket\phi\rrbracket_{\mathsf{N}}=\mathsf{P}=\llbracket\bigsor_{X\in \mathsf{P}}\Psi_X\rrbracket_{\mathsf{N}}$, and thus $\phi\equiv\bigsor_{X\in \mathsf{P}}\Psi_X$ follows from locality.
\end{proof}

Another immediate corollary of the expressive completeness of \PU (in $\mathbb{P}^{\dot{\overline{\cup}}}$) is that all union closed team logics with the empty team property (including \PUw, \PInc, \PIncz, \PAm and \Picnst) are compact, as  \PUw was shown in \cite{YangVaananen:17PT} to be compact. 

\begin{corollary}[Compactness]\label{compactness_thm}
Let \LL be a team-based logic that is closed under unions and has the empty team property. Then \LL is compact, that is, for any set  $\Gamma\cup\{\phi\}$ of \LL-formulas, if $\Gamma\models\phi$, then there exists a finite set $\Gamma_0\subseteq\Gamma$ such that $\Gamma_0\models\phi$.
In particular, \PUw, \PInc, \PIncz, \PAm and \Picnst are compact.
\end{corollary}
\begin{proof}
 Since  \LL is closed under unions and has the empty team property, for every \LL-formula $\psi(\mathsf{N})\in \Gamma\cup\{\phi\}$, the $\mathsf{N}$-team property $\llbracket\psi\rrbracket_{\mathsf{N}}$ that $\psi$ defines belongs to $\mathbb{P}^{\dot{\overline{\cup}}}_{\mathsf{N}}$. Since \PU is expressively complete in $\mathbb{P}^{\dot{\overline{\cup}}}$, there exists a \PU-formula $\psi^\ast(\mathsf{N})$ such that $\llbracket\psi^\ast\rrbracket_{\mathsf{N}}=\llbracket\psi\rrbracket_{\mathsf{N}}$ and thus $\psi\equiv\psi^\ast$. Since $\Gamma\models\phi$, we have $\Gamma^\ast\models\phi^\ast$, where $\Gamma^\ast=\{\gamma^\ast\mid \gamma\in \Gamma\}$. Now, since \PUw is compact (by \cite{YangVaananen:17PT}), there exists a finite set $\Gamma_0\subseteq\Gamma$ such that $\Gamma_0^\ast\models\phi^\ast$, which then gives $\Gamma_0\models\phi$.
\end{proof}


Building on \Cref{PU_excmp} and its proof, we now show that all the other union closed team logics \PIncz, \PInc,  \Picnst and \PAm  are also expressively complete in $\mathbb{P}^{\dot{\overline{\cup}}}$. The proofs below for the expressive completeness of \PIncz and \PInc are essentially an adaptation of a similar one for the expressive completeness of modal inclusion logic given in \cite{HellaStumpf15}.



\begin{theorem}\label{PT_exp_pw} 
The logics \PIncz, \PInc,  \Picnst and \PAm are all expressively complete in $\mathbb{P}^{\dot{\overline{\cup}}}$. In particular, 
$\PUw\equiv\PIncz\equiv\PInc\equiv\Picnst\equiv\PAm$. 
\end{theorem}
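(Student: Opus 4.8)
The plan is to leverage the expressive completeness of \PU in $\mathbb{P}^{\dot{\overline{\cup}}}$, already recalled in the excerpt, and to reduce everything to a single family of formulas. Since each of \PIncz, \PInc, \Picnst and \PAm extends \CPL, every formula of these logics defines a property in $\mathbb{P}^{\dot{\overline{\cup}}}$, so one inclusion is free; moreover the flat disjunction $\sor$ and the complete conjunctions $\theta_v:=p_1^{v(1)}\wedge\cdots\wedge p_n^{v(n)}$ are available in all of them. Because every $\mathsf{P}\in\mathbb{P}^{\dot{\overline{\cup}}}_{\mathsf{N}}$ equals $\llbracket\bigsor_{X\in\mathsf{P}}\Psi_X\rrbracket_{\mathsf{N}}$, it suffices to exhibit, for each of the four logics and each $\mathsf{N}$-team $X$, a formula of that logic equivalent to $\Psi_X$, i.e. one whose only models are $\emptyset$ and $X$. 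I would then obtain \PInc from \PIncz and \PAm from \Picnst by a squeezing argument: since $\idep(p)=\langle\rangle\Upsilon p$ we have $\Picnst\leq\PAm$, and $\PIncz\leq\PInc$ syntactically, so once the weaker logic characterizes $\mathbb{P}^{\dot{\overline{\cup}}}$, the stronger one, whose formulas also define only properties in $\mathbb{P}^{\dot{\overline{\cup}}}$, is squeezed to do the same. The final "in particular" clause is then immediate, as all five logics characterize the same collection.

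For \PIncz the translation is direct. Writing $\mathsf{x}_v$ for the constant sequence whose $i$th entry is $\top$ if $v(i)=1$ and $\bot$ otherwise, the primitive inclusion atom $\mathsf{x}_v\subseteq p_1\cdots p_n$ holds on $Y$ exactly when $Y=\emptyset$ or $v\in Y$, since its left-hand side evaluates to the constant tuple coding $v$. Hence I would put
\[
\Psi_X\ \equiv\ \Bigl(\bigsor_{v\in X}\theta_v\Bigr)\wedge\bigwedge_{v\in X}(\mathsf{x}_v\subseteq p_1\cdots p_n),
\]
the first conjunct forcing $Y\subseteq X$ and the atoms forcing every $v\in X$ into any nonempty $Y$.

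The delicate case, and what I expect to be the main obstacle, is \Picnst, where the only available non-flat atoms are the single-variable inconstancy atoms $\idep(p_i)$, which merely assert non-constancy of one coordinate and cannot by themselves detect the presence of a specific valuation; naive attempts (e.g. conjoining $\bigsor_{v\in X}\theta_v$ with inconstancy atoms) fail on "spread" subteams such as $\{01,10\}\subsetneq\{00,01,10\}$, which share every inconstancy profile with $X$. The idea I would use is an induction on the number $n$ of variables, with the flat disjunction serving as a routing device. Splitting $X$ by the value of $p_n$ into $X^0$ and $X^1$, whose restrictions to $\{p_1,\dots,p_{n-1}\}$ I call $Y^0,Y^1$ (and for which $\Psi_{Y^0},\Psi_{Y^1}$ exist by the induction hypothesis), I would set, in the case where both halves are nonempty,
\[
\Psi_X\ \equiv\ \bigl[(\neg p_n\wedge\Psi_{Y^0})\sor(p_n\wedge\Psi_{Y^1})\bigr]\wedge\idep(p_n),
\]
with the degenerate cases handled by $\neg p_n\wedge\Psi_{Y^0}$ or $p_n\wedge\Psi_{Y^1}$ alone, and the recursion bottoming out at one variable where $\Psi_{\{v_0,v_1\}}\equiv\idep(p_1)$.

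The bracketed disjunction confines $Y$ to $\{\emptyset,X^0,X^1,X\}$, and the factor $\idep(p_n)$, forcing both values of $p_n$ to occur, then eliminates the partial slices $X^0$ and $X^1$, leaving exactly $\{\emptyset,X\}$. Verifying that this recursion really yields $\llbracket\Psi_X\rrbracket=\{\emptyset,X\}$ at every stage — in particular that the flat split cannot "leak" an unwanted subteam — is the crux of the whole argument, and is precisely where the weakness of single-variable inconstancy must be overcome. With \Picnst settled, \PAm follows by the squeezing step above, completing the chain of equivalences.
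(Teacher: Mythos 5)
Your proposal is correct and takes essentially the same route as the paper: your \PIncz translation is exactly the paper's $\Theta_X\wedge\Phi_X$ (the complete disjunction forcing $Y\subseteq X$, conjoined with the primitive inclusion atoms forcing $X\subseteq Y$ or $Y=\emptyset$), and your inductive construction for \Picnst --- splitting $X$ on the last variable, routing the two halves through the flat disjunction, and conjoining $\idep(p_n)$ to kill the partial slices --- is the paper's induction in only cosmetically different form (the paper shrinks the set of recorded variables while keeping $X$ an $\mathsf{N}$-team, rather than restricting the teams). The squeezing step for \PInc and \PAm is likewise the paper's opening observation, so nothing essential differs.
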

\begin{proof}
Since \PIncz and \Picnst are sublogics of \PInc and \PAm, respectively, we only need to show that \PIncz and \Picnst are expressively complete in $\mathbb{P}^{\dot{\overline{\cup}}}$. That is to show that for every set $\mathsf{N}$ of propositional variables, for every $\mathsf{N}$-team property $\mathsf{P}\in \mathbb{P}^{\dot{\overline{\cup}}}$, there is a formula $\phi(\mathsf{N})$ in \PIncz and in \Picnst such that $\mathsf{P}=\llbracket\phi\rrbracket_{\mathsf{N}}$. Now, by the proof of \Cref{PU_excmp}, we know that $\mathsf{P}=\llbracket\bigsor_{X\in \mathsf{P}}\Psi_X\rrbracket_{\mathsf{N}}$, where each $\Psi_X$ is a \PU-formula. We would thus be done if for every $\mathsf{N}$-team $X\in\mathsf{P}$, we can find a \PIncz formula $\eta(\mathsf{N})$  and a \Picnst-formula $\psi_X(\mathsf{N})$ such that $\eta_X\equiv\Psi_X\equiv\psi_X$. 

\vspace{0.5\baselineskip}

We first construct for every $\mathsf{N}$-team $X$, the formula $\eta_X(\mathsf{N})$ in \PIncz such that $\psi_X\equiv\Psi_X$. If $X=\emptyset$, then by definition, $\Psi_X=\bot$ and we can take $\eta_X=\bot$. Now, assume that $X\neq\emptyset$. First, consider the formula
\[\Phi_X:=\bigwedge_{v\in X}\underline{v(1)}\dots \underline{v(n)}\subseteq p_{1}\dots p_{n},\] 
where, once again, $v(i)$ is short for $v(p_i)$,  $\underline{0}:=\bot$ and $\underline{1}:=\top$. Each conjunct in $\Phi_X$ ensures (modulo the empty team) that the valuation $v$ must belong to the team $Y$ in question, in the sense that for any $\mathsf{N}$-team $Y$,
\begin{equation}\label{PT_exp_pw_eq0} 
Y\models\underline{v(1)}\dots \underline{v(n)}\subseteq p_{1}\dots p_{n}\iff Y=\emptyset\text{ or }v\in Y.
\end{equation}
To see why (\ref{PT_exp_pw_eq0}) holds, note that if $Y=\emptyset$, then $Y\models \underline{v(1)}\dots \underline{v(n)}\subseteq p_{1}\dots p_{n}$. If $Y\neq\emptyset$, then
$Y\models \underline{v(1)}\dots \underline{v(n)}\subseteq p_{1}\dots p_{n}$, iff there exists $u\in Y$ such that 
\[u(p_i)=u(\underline{v(i)})=v(p_i)\text{ for all }1\leq i\leq n,\]
iff $v\in Y$.
Now, by (\ref{PT_exp_pw_eq0}), we obtain that 
\begin{equation}\label{PT_exp_pw_eq1} 
Y\models\Phi_X\iff Y=\emptyset,\text{ or }v\in Y\text{ for all }v\in X\iff Y=\emptyset\text{ or }X\subseteq Y.
\end{equation}

Recall that we have defined a formula $\Theta_X$ in the proof of \Cref{cpl_excmp}, and the equivalence (\ref{cpl_excmp_eq2}) holds for the formula $\Theta_X$. Define now $\eta_X=\Theta_X\wedge\Phi_X$\footnote{This \PIncz-formula is essentially adapted from a very similar and slightly more complex modal formula in \cite{HellaStumpf15}, 
which uses  the more general extended inclusion atoms. 
}. By (\ref{cpl_excmp_eq2}) and (\ref{PT_exp_pw_eq1}), we obtain that for any $\mathsf{N}$-team $Y$, 
\[Y\models \psi_X\iff Y\subseteq X,\text{ and }Y=\emptyset\text{ or }X\subseteq Y\iff Y=\emptyset\text{ or }Y=X.\]
By (\ref{eq_psi_x_prop}) and locality, this implies that $\eta_X\equiv\Psi_X$, 
as we wanted.

\vspace{0.5\baselineskip}

Next, for every $\mathsf{N}$-team $X$, we define a formula $\psi_X$ in \Picnst  such that $\psi_X\equiv\Psi_X$. Let $\mathsf{N}=\{p_1,\dots,p_n\}$. We will define, inductively, for each subset $\mathsf{K}=\{p_{1},\dots,p_{k}\}$ ($k\leq n$) of propositional variables from $\mathsf{N}$ a \Picnst-formula $\psi_X^{\mathsf{K}}(\mathsf{K})$ such that $\psi_X^{\mathsf{K}}\equiv \Psi_X^{\mathsf{K}}$, where
\[\Psi_X^{\mathsf{K}}=\bigror_{v\in X}(p_{1}^{v(1)}\wedge\dots\wedge p_{k}^{v(k)}).\]
The above formula $\Psi_X^{\mathsf{K}}$ can be viewed as an approximation of the formula $\Psi_X$, and obviously $\Psi_X=\Psi_X^{\mathsf{N}}$. The required formula $\psi_X$ can thus be defined as $\psi_X=\psi_X^{\mathsf{N}}$. We now give the definition of $\psi_X^{\mathsf{K}}$ for every $\mathsf{N}$-team $X$ by induction on $|\mathsf{K}|$.

If $\mathsf{K}=\{p_1\}$, we have 
\[\displaystyle\Psi_X^{\mathsf{K}}=\big(\bigror_{v\in X^+} p_{1}\big)\ror\big(\bigror_{v\in X^-} \neg p_{1}\big),\] 
where $X^+=\{v\in X\mid v(p_1)=1\}$ and $X^-=\{v\in X\mid v(p_1)=0\}$.  If $X^+=\emptyset$, then $\Psi_X^{\mathsf{K}}=\bigroril_{v\in X^-} \neg p_{1}\equiv \neg p_{1}$, and we define $\psi_X^{\mathsf{K}}=\neg p_1$. If $X^-=\emptyset$, then $\Psi_X^{\mathsf{K}}\equiv  p_{1}$ and we define $\psi_X^{\mathsf{K}}=p_1$. If $X^+,X^-\neq\emptyset$, then $\displaystyle\Psi_X^{\mathsf{K}}=p_{1}\ror \neg p_{1}\equiv (p_1\sor \neg p_1)\wedge\anm(p_{1})$, and we define $\psi_X^{\mathsf{K}}=(p_1\sor \neg p_1)\wedge\anm(p_{1})$. 

If $\mathsf{K}=\{p_1,\dots,p_{m+1}\}=\mathsf{K}_0\cup\{p_{m+1}\}$, let $X^+=\{v\in X\mid v(p_{m+1})=1\}$ and $X^-=\{v\in X\mid v(p_{m+1})=0\}$. If $X^+=\emptyset$, then 
\begin{align*}
\Psi_X^{\mathsf{K}}&=\bigror_{v\in X^-} (p_{1}^{v(1)}\wedge\dots\wedge p_{m}^{v(m)}\wedge \neg p_{m+1})\\
&\equiv\big(\bigror_{v\in X^-} (p_{1}^{v(1)}\wedge\dots\wedge p_{m}^{v(m)})\big)\wedge \neg p_{m+1}\\
&\equiv\psi_{X^-}^{\mathsf{K}_0}\wedge \neg p_{m+1},\tag{by induction hypothesis}
\end{align*}
and we define $\psi_{X}^{\mathsf{K}}=\psi_{X^-}^{\mathsf{K}_0}\wedge \neg p_{m+1}$.
Similarly, if $X^-=\emptyset$, then $\Psi_X^{\mathsf{K}}\equiv \psi_{X^+}^{\mathsf{K}_0}\wedge p_{m+1}$, and we define $\psi_{X}^{\mathsf{K}}=\psi_{X^+}^{\mathsf{K}_0}\wedge  p_{m+1}$.
If $X^+,X^-\neq \emptyset$, by induction hypothesis we have  that 
\begin{align*}
\Psi_X^{\mathsf{K}} &\equiv(\psi_{X^+}^{\mathsf{K}_0}\wedge p_{m+1})\ror(\psi_{X^-}^{\mathsf{K}_0}\wedge \neg p_{m+1})
\equiv\big((\psi_{X^+}^{\mathsf{K}_0}\wedge p_{m+1})\sor(\psi_{X^-}^{\mathsf{K}_0}\wedge \neg p_{m+1})\big)\wedge \anm(p_{m+1}).
\end{align*}
Then, we define $\psi_{X}^{\mathsf{K}}=\big((\psi_{X^+}^{\mathsf{K}_0}\wedge p_{m+1})\sor(\psi_{X^-}^{\mathsf{K}_0}\wedge \neg p_{m+1})\big)\wedge \anm(p_{m+1})$.
\end{proof}

The above expressive completeness proof gives rise to normal forms of formulas in the logics \PIncz, \PInc,  \Picnst and \PAm.

\begin{corollary}[Normal form]\label{nf-pinc-picnst}
\begin{enumerate}[(i)]
\item\label{nf-pinc-picnst-1} Every formula $\phi(\mathsf{N})$ in \PIncz and \PInc is equivalent to a formula of the form $\bigvee_{X\in \mathcal{X}}(\Theta_X\wedge\Phi_X)$  for some collection $\mathcal{X}$ of $\mathsf{N}$-teams.
\item\label{nf-pinc-picnst-2} Every formula $\phi(\mathsf{N})$ in \Picnst and \PAm is equivalent to a formula of the form $\bigvee_{X\in \mathcal{X}}\psi_X$  for some collection $\mathcal{X}$ of $\mathsf{N}$-teams.
\end{enumerate}
\end{corollary}

Let us end this section with some further discussion on the normal forms of the logics \PInc and \PAm, or the normal forms of inclusion atoms and anonymity atoms in the logics in particular.
First, note that  \PInc-formulas in the normal form $\bigvee_{X\in \mathcal{X}}(\Theta_X\wedge\Phi_X)$ contain primitive inclusion atoms $\mathsf{x}\subseteq \mathsf{p}$ only 
Similarly, \PAm-formulas in the normal form $\bigvee_{X\in \mathcal{X}}\psi_X$ contain inconstancy atoms $\idep(p)$ with single arguments only. It then follows that arbitrary (nontrivial) inclusion atoms $\mathsf{b}\subseteq\mathsf{c}$ and anonymity atoms $\mathsf{p}\Upsilon\mathsf{q}$ are definable in terms of regular primitive inclusion atoms $\mathsf{x}\subseteq \mathsf{p}$  and inconstancy atoms $\idep(p)$ with single arguments, respectively. 
We provide  direct definitions for these atoms in terms of the corresponding simpler atoms in the following. 



To simplify notations,  we write $v(i)$ for $v(p_i)$. The notation $p^1$ or $p^{\top}$ stand for $p$, and $p^0$ or $p^{\bot}$ stand for $\neg p$; similarly, $\top^\bot$ and $\bot^{\top}$ both stand for $\bot$. Hereafter, we reserve the letters $p,q,r,\dots$ with or without subscripts for propositional variables, the letters $x,y,z,\dots$ with or without subscripts for  the constants $\bot$ and $\top$, and the letters $a,b,c,\dots$ with or without subscripts for either propositional variables or $\bot$ or $\top$. The serif font letters  $\mathsf{p},\mathsf{q},\dots$ will stand for sequences of propositional variables of certain lengths; similarly for $\mathsf{x},\mathsf{y},\dots$ in serif font, and $\mathsf{a},\mathsf{b},\mathsf{c},\dots$ in serif font. We write $|\mathsf{a}|$ for the length of the sequence $\mathsf{a}$. For two sequences $\mathsf{a}=\langle a_1,\dots, a_n\rangle$ and $\mathsf{x}=\langle x_1,\dots,x_n\rangle$ with each $a_i\in \textsf{Prop}\cup\{\top,\bot\}$ and  $x_i\in\{\top,\bot\}$, we write $\mathsf{a}^{\mathsf{x}}$ for $a_1^{x_1}\wedge \dots\wedge a_n^{x_n}$. 


\begin{proposition}\label{anonymity2ror_df}
\begin{enumerate}[(i)]
\item $\mathsf{p}\Upsilon q_1\dots q_m \equiv \mathsf{p}\Upsilon q_1\vee\dots\vee \mathsf{p}\Upsilon q_m\text{ and }\mathsf{p}\Upsilon\langle\rangle\equiv\bot$,
\item $\displaystyle p_1\dots p_k\Upsilon q \equiv  \bigvee_{v\in 2^{\mathsf{K}}}(p_1^{v(1)} \wedge\dots\wedge p_k^{v(k)}\wedge \idep(q)),\text{ where }\mathsf{K}=\{p_1,\dots,p_k\},$
\item $\mathsf{p}\Upsilon \langle\rangle\vdash\bot$
\end{enumerate}
\end{proposition}
\begin{proof}
Easy.
\end{proof}

\begin{proposition}\label{inc_2prim}
\begin{enumerate}[(i)]
\item\label{inc_2prim_1}
$\bigwedge_{\mathsf{x}\in\{\top,\bot\}^{|\mathsf{a}|}}\big(\mathsf{a}^{\mathsf{x}}\to\mathsf{x}\subseteq \mathsf{b}\big)\equiv\mathsf{a}\subseteq \mathsf{b}$.
\item\label{inc_2prim_3} If $|\mathsf{a}|=|\mathsf{c}|$ and $|\mathsf{b}|=|\mathsf{d}|$, then $\mathsf{a}\subseteq \mathsf{a}\equiv\top$,
\[\mathsf{a}\top\mathsf{b}\subseteq \mathsf{c}\top\mathsf{d}\equiv \mathsf{a}\mathsf{b}\subseteq \mathsf{c}\mathsf{d}\equiv\mathsf{a}\bot\mathsf{b}\subseteq \mathsf{c}\bot\mathsf{d}\text{ and } \mathsf{a}\top\mathsf{b}\subseteq \mathsf{c}\bot\mathsf{d}\equiv \bot\equiv\mathsf{a}\bot\mathsf{b}\subseteq \mathsf{c}\top\mathsf{d}.\]
\end{enumerate}
\end{proposition}
\begin{proof}
Item (\ref{inc_2prim_3}) is easy to prove. We only give the detailed proof for item (\ref{inc_2prim_1}). For the left to right direction, suppose $X\models \neg\mathsf{a}^{\mathsf{x}}\vee\mathsf{x}\subseteq \mathsf{b}$ for all $\mathsf{x}\in\{\top,\bot\}^{|\mathsf{a}|}$. Let $v\in X$ and let $\mathsf{x}\in\{\top,\bot\}^{|\mathsf{a}|}$ be such that $v(\mathsf{x})=v(\mathsf{a})$. By assumption, there exist $Y,Z\subseteq X$ such that $X=Y\cup Z$, $Y\models \neg\mathsf{a}^{\mathsf{x}}$ and $Z\models \mathsf{x}\subseteq \mathsf{b}$. Clearly, $v\notin Y$, which means $v\in Z$. Then, there exists $u\in Z$ such that $u(\mathsf{b})=v(\mathsf{x})=v(\mathsf{a})$. Hence, $X\models \mathsf{a}\subseteq \mathsf{b}$.

Conversely, suppose $X\models \mathsf{a}\subseteq \mathsf{b}$. We show that $X\models \mathsf{a}^{\mathsf{x}}\to\mathsf{x}\subseteq \mathsf{b}$ for any $\mathsf{x}\in\{\top,\bot\}^{|\mathsf{a}|}$. Let $X_{\mathsf{x}}=\{v\in X\mid v(\mathsf{a})\neq v(\mathsf{x})\}$. Clearly, $X_{\mathsf{x}}\models \neg \mathsf{a}^{\mathsf{x}}$. If $X_{\mathsf{x}}=X$, then  $X\models \neg \mathsf{a}^{\mathsf{x}}$ and thus $X\models \neg \mathsf{a}^{\mathsf{x}}\vee \mathsf{x}\subseteq \mathsf{b}$ as required. Otherwise, $X\setminus X_{\mathsf{x}}\neq\emptyset$. 
We show that $X\models \mathsf{x}\subseteq \mathsf{b}$, which would suffice. Let $u\in X$ and pick any $v\in X\setminus X_{\mathsf{x}}$. We have that $u(\mathsf{x})=v(\mathsf{x})=v(\mathsf{a})$. Since $X\models \mathsf{a}\subseteq \mathsf{b}$, there exists $w\in X$ such that $w(\mathsf{b})=v(\mathsf{a})=u(\mathsf{x})$, as required.
\end{proof}

Moreover, we show that 
primitive inclusion atoms can actually be defined in terms of such atoms of arity $1$, i.e., inclusion atoms of the form $\top\subseteq p$ or $\bot\subseteq q$.

\begin{proposition}\label{inc_2prim_2}
$\mathsf{x}y\subseteq\mathsf{p}q\equiv \mathsf{x}\subseteq\mathsf{p}\wedge \big((y\subseteq q\wedge \mathsf{p}^{\mathsf{x}})\vee\neg \mathsf{p}^{\mathsf{x}}\big)$.
\end{proposition}
\begin{proof}
For the  left to right direction, suppose $X\models \mathsf{x}y\subseteq\mathsf{p}q$. Then clearly $X\models \mathsf{x}\subseteq\mathsf{p}$ as well. It remains to show $X\models(y\subseteq q\wedge \mathsf{p}^{\mathsf{x}})\vee\neg \mathsf{p}^{\mathsf{x}}$. Define 
\[Y=\{v\in X\mid v(\mathsf{p})=v(\mathsf{x})\}\]
and $Z=X\setminus Z$. Clearly, $Y\cup Z=X$, $Y\models \mathsf{p}^{\mathsf{x}}$ and $Z\models \neg \mathsf{p}^{\mathsf{x}}$. We now show that $Y\models y\subseteq q$. For any $v\in Y$, we have $v(\mathsf{p})=v(\mathsf{x})$ by definition. Also, since $Y\subseteq X\models \mathsf{x}y\subseteq\mathsf{p}q$, there exists $u\in X$ such that $v(\mathsf{x})=u(\mathsf{p})$ and $v(y)=u(q)$. From $u(\mathsf{p})=v(\mathsf{x})=u(\mathsf{x})$, we conclude that $u\in Y$. Hence, $Y\models y\subseteq q$.

Conversely, suppose $X\models \mathsf{x}\subseteq\mathsf{p}\wedge \big((y\subseteq q\wedge \mathsf{p}^{\mathsf{x}})\vee\neg \mathsf{p}^{\mathsf{x}}\big)$. For any $v\in X$, since $X\models \mathsf{x}\subseteq\mathsf{p}$, there exists $u\in X$ such that $u(\mathsf{p})=v(\mathsf{x})=u(\mathsf{x})$. On the other hand, since $X\models (y\subseteq q\wedge \mathsf{p}^{\mathsf{x}})\vee\neg \mathsf{p}^{\mathsf{x}}$, there exist $Y,Z\subseteq X$ such that $X=Y\cup Z$, $Y\models y\subseteq q\wedge \mathsf{p}^{\mathsf{x}}$ and $Z\models \neg \mathsf{p}^{\mathsf{x}}$. Clearly, $u\notin Z$ and so $u\in Y\neq\emptyset$. Now, since $Y\models y\subseteq q$, there exists $w\in Y$ such that $w(q)=u(y)=v(y)$. From $Y\models \mathsf{p}^{\mathsf{x}}$, we also conclude that $w(\mathsf{p})=w(\mathsf{x})=v(\mathsf{x})$. Hence $X\models \mathsf{x}y\subseteq\mathsf{p}q$.
\end{proof}

Putting these results together, we obtain that an arbitrary nontrivial inclusion atom $\mathsf{a}\subseteq\mathsf{b}$ can be decomposed into very simple inclusion atoms of the form $\bot\subseteq p$ and $\top\subseteq p$.

\begin{corollary}\label{inc_atm2_reg_primitive_atm_sem}
An arbitrary inclusion atom $\mathsf{a}\subseteq\mathsf{b}$ is either equivalent to $\bot$ or $\top$, or it can be expressed in terms of  primitive inclusion atoms of arity $1$.
\end{corollary}
\begin{proof}
Given an arbitrary inclusion atom $\mathsf{a}\subseteq\mathsf{b}$, by applying \Cref{inc_2prim}(\ref{inc_2prim_1}), we obtain an equivalent formula $\phi$ in which all inclusion atoms are primitive and of the form $\mathsf{x}\subseteq \mathsf{b}$. Next,  apply \Cref{inc_2prim}(\ref{inc_2prim_3}) to remove the constants $\bot$ and $\top$ from the sequence $\mathsf{b}$ on the right-hand side of all inclusion atoms $\mathsf{x}\subseteq \mathsf{b}$. Some of the resulting formulas are equivalent to $\bot$ or $\top$. Finally, we apply \Cref{inc_2prim_2} exhaustedly to turn every primitive inclusion atom $\mathsf{x}\subseteq\mathsf{p}$ obtained in the previous step into an equivalent formula in which inclusion atoms are all of arity $1$.
\end{proof}

It is interesting to note that \Cref{inc_atm2_reg_primitive_atm_sem} also gives rise to a direct definition of inclusion atoms $\mathsf{a}\subseteq\mathsf{b}$ in terms of relevant disjunction $\ror$: First transform $\mathsf{a}\subseteq\mathsf{b}$ to a formula that contains primitive inclusion atoms $\bot\subseteq p$ or $\top\subseteq p$ of arity $1$. Then observe that
\(\bot\subseteq p\equiv \neg p\ror \top\text{ and }\top\subseteq p\equiv p\ror\top.\)

\section{Axiomatizations}

In this section, we axiomatize the union closed team logics \PUw and \PInc as well as \PIncz. We define systems of natural deduction for these logics and prove the completeness theorem for these systems. Our argument for the completeness proof makes heavy and essential use of the disjunctive normal form of the logics given in  \Cref{nf-pu} and \Cref{nf-pinc-picnst} from the previous section. Such a technique is a generalization of the similar ones developed in  \cite{VY_PD,YangVaananen:17PT} for propositional team logics. As seen in \Cref{nf-pinc-picnst}(ii), the disjunctive normal form for the logic \PAm is substantially more complex, 
our approach thus does not suit  well for \PAm. We leave the axiomatization for \PAm for future work.

\subsection{\PUw}\label{sec:pu}

In this subsection, we define a system of natural deduction for \PUw and prove the completeness theorem.

Let us first present the system. We adopt the standard conventions of systems of natural deduction; readers who are not familiar with natural deduction systems are referred to, e.g., \cite{TroelstraSchwichtenberg96,logic_structure_vanDalen}. For example, the letter $D$ (with or without subscripts) in the following definition stands for an arbitrary derivation. 

\begin{table}[t]
 \begin{center}
\caption{Rules for constants and classical connectives
}
\setlength{\tabcolsep}{6pt}
\renewcommand{\arraystretch}{1.8}
\setlength{\extrarowheight}{1pt}
\vspace{2pt}
\scalebox{.96}{\begin{tabular}{|C{0.4\linewidth}C{0.52\linewidth}|}
\hline
\multicolumn{2}{|c|}{\AxiomC{}\RightLabel{\topi}\UnaryInfC{$\top$} \DisplayProof\quad
\AxiomC{}\noLine\UnaryInfC{[$\alpha$]}\noLine\UnaryInfC{{\small$D$}}\noLine\UnaryInfC{$\bot$} \RightLabel{$\neg$\textsf{I} {\footnotesize (1)}}\UnaryInfC{$\neg\alpha$}\noLine\UnaryInfC{}\noLine\UnaryInfC{}\DisplayProof
\quad\AxiomC{{\small$D_0$}}\noLine\UnaryInfC{$\alpha$}\AxiomC{{\small$D_1$}}\noLine\UnaryInfC{$\neg\alpha$}\RightLabel{$\neg$\textsf{E}}\BinaryInfC{$\phi$} \DisplayProof
\quad\quad\AxiomC{[$\neg\alpha$]}\noLine\UnaryInfC{{\small$D$}}\noLine\UnaryInfC{$\bot$}\RightLabel{\textsf{RAA} {\footnotesize (1)}}\UnaryInfC{$\alpha$}\noLine\UnaryInfC{} \DisplayProof 
}\\
\AxiomC{{\small$D_0$}}\noLine\UnaryInfC{$\phi$}\AxiomC{{\small$D_1$}}\noLine\UnaryInfC{$\psi$}\RightLabel{\ci}\BinaryInfC{$\phi\wedge\psi$} \DisplayProof &
\AxiomC{{\small$D$}}\noLine\UnaryInfC{$\phi\wedge\psi$} \RightLabel{\ce}\UnaryInfC{$\phi$} \DisplayProof\quad\AxiomC{{\small$D$}}\noLine\UnaryInfC{$\phi\wedge\psi$} \RightLabel{\ce}\UnaryInfC{$\psi$} \DisplayProof\\
\multirow{2}{*}{\AxiomC{{\small$D$}}\noLine\UnaryInfC{$\phi$}\RightLabel{\sori} \UnaryInfC{$\phi\sor\psi$} \DisplayProof\AxiomC{{\small$D$}}\noLine\UnaryInfC{$\phi$}\RightLabel{\sori} \UnaryInfC{$\psi\sor\phi$}  \DisplayProof}
&\AxiomC{{\small$D$}}\noLine\UnaryInfC{$\phi\vee\psi$} \AxiomC{[$\phi$]}\noLine\UnaryInfC{{\small$D_0$}}\noLine\UnaryInfC{$\chi$} \AxiomC{}\noLine\UnaryInfC{[$\psi$]}\noLine\UnaryInfC{{\small$D_1$}}\noLine\UnaryInfC{$\chi$} \RightLabel{$\vee\textsf{E}$ {\footnotesize (2)}}\TrinaryInfC{$\chi$}\DisplayProof\\[-4pt]
\multicolumn{2}{|l|}{\footnotesize{(1) The undischarged assumptions\footnotemark in the derivation $D$ contain classical formulas only.}} \\[-14pt]
\multicolumn{2}{|l|}{\footnotesize{(2) The undischarged assumptions in the derivations $D_0$ and $D_1$ contain classical formulas only.}} \\\hline
\end{tabular}
}
\label{tab_rule_general}
\vspace{-28pt}
\end{center}
\end{table}
\begin{table}[H]
 \begin{center}
\caption{Rules for $\ror$ and interactions}
\vspace{2pt}
\setlength{\tabcolsep}{6pt}
\renewcommand{\arraystretch}{1.8}
\setlength{\extrarowheight}{1pt}
\scalebox{.96}{\begin{tabular}{|C{0.48\linewidth}C{0.45\linewidth}|}\hline
\AxiomC{}\noLine\UnaryInfC{{\small$D_0$}}\noLine\UnaryInfC{$\phi$}\AxiomC{}\noLine\UnaryInfC{{\small$D_1$}}\noLine\UnaryInfC{$\psi$} \RightLabel{$\ror$\textsf{I}}\BinaryInfC{$\phi\ror\psi$} \noLine\UnaryInfC{}\DisplayProof&\multirow{2}{*}{\AxiomC{{\small$D_0$}}\noLine\UnaryInfC{$\phi\ror\psi$} \AxiomC{}\noLine\UnaryInfC{$[\phi]$}\noLine\UnaryInfC{{\small$D_1$}}\noLine\UnaryInfC{$\chi$} \RightLabel{$\ror\textsf{Mon}$ {\footnotesize(1)}}\BinaryInfC{$\chi\ror\psi$}\DisplayProof} 
 \\
\AxiomC{{\small$D$}}\noLine\UnaryInfC{$\phi\ror\psi$} \RightLabel{$\ror$\textsf{Com}}\UnaryInfC{$\psi\ror\phi$}\noLine\UnaryInfC{}\DisplayProof\AxiomC{{\small$D$}}\noLine\UnaryInfC{$\phi\ror(\psi\ror\chi)$} \RightLabel{$\ror$\textsf{Ass}}\UnaryInfC{$(\phi\ror\psi)\ror\chi$}\noLine\UnaryInfC{}\DisplayProof
&
\\
\AxiomC{{\small$D$}}\noLine\UnaryInfC{$\phi\sor\psi$} \AxiomC{}\noLine\UnaryInfC{[$\phi$]}\noLine\UnaryInfC{{\small$D_0$}}\noLine\UnaryInfC{$\chi$} 
\AxiomC{}\noLine\UnaryInfC{[$\psi$]}\noLine\UnaryInfC{{\small$D_1$}}\noLine\UnaryInfC{$\chi$} 
\AxiomC{}\noLine\UnaryInfC{$[\phi\ror\psi]$}\noLine\UnaryInfC{{\small$D_2$}}\noLine\UnaryInfC{$\chi$} \RightLabel{$\sor\textsf{E}_{\ror}$}\QuaternaryInfC{$\chi$}\noLine\UnaryInfC{}\DisplayProof &\AxiomC{{\small$D$}}\noLine\UnaryInfC{$\phi\ror\psi$}\RightLabel{$\ror\sor$\textsf{Tr}} \UnaryInfC{$\phi\sor\psi$}  \DisplayProof\\
\AxiomC{{\small$D$}}\noLine\UnaryInfC{$\phi\ror\bot$}\RightLabel{$\ror\bot$\textsf{E}}\UnaryInfC{$\psi$} \DisplayProof
&\AxiomC{{\small$D$}}\noLine\UnaryInfC{$\phi\ror(\psi\sor\chi)$} \RightLabel{$\dstr\ror\sor$}\UnaryInfC{$(\phi\ror\psi)\sor(\phi\ror\chi)$} \DisplayProof\\[-4pt]
\multicolumn{2}{|l|}{\footnotesize  (1) The undischarged assumptions in the derivation $D_1$ contain classical formulas only.} \\\hline
\end{tabular}}
\label{tab_rule_ror}
\end{center}
\end{table}
\vspace{-26pt}

\footnotetext{When this rule is applied, the (open) assumption $\alpha$ at the top of the branch of the derivation will be deemed as  {\em closed} and thus discharged (from the set of open assumptions of the derivation). All the remaining assumptions in the derivation are regarded as ``undischarged'' assumptions.}

\begin{definition}
The  system of \PU consists of all rules  given in \Cref{tab_rule_general,tab_rule_ror}, where $\alpha$ ranges over classical formulas only.

We write $\Gamma\vdash_{\PU}\phi$ or simply $\Gamma\vdash\phi$ if $\phi$ is derivable from the set  $\Gamma$ of formulas by applying the rules of the system of \PU. We write simply $\phi\vdash\psi$ for $\{\phi\}\vdash\psi$. Two formulas $\phi$ and $\psi$ are said to be {\em provably equivalent}, written $\phi\dashv\vdash\psi$, if both $\phi\vdash\psi$ and $\psi\vdash\phi$. 
\end{definition}

Our system does not admit uniform substitution, as, e.g., the rules for negation $\neg$ apply to classical formulas  only. When restricted to classical formulas the system coincides with the system of classical propositional logic. In particular, the disjunction $\vee$ and the negation $\neg$ admit the usual elimination rule $\vee\textsf{E}$, introduction rule $\neg$\textsf{I}  and {\em reductio ad absurdum} rule \textsf{RAA}, respectively, under the condition that the undischarged assumptions in the derivations involved contain classical 
formulas only. 
It is interesting to note that the soundness of  the disjunction elimination rule $\vee\textsf{E}$  is a nontrivial  feature of the union closed team logics, especially because this same rule is actually not sound for the propositional team logics with the downwards closure property or without any closure property (see \cite{VY_PD,YangVaananen:17PT}).

The rules for the relevant disjunction $\ror$ are peculiar.
 Unsurprisingly, the usual introduction rule ($\phi/\phi\ror\psi$) is not sound for the relevant disjunction $\ror$, because, e.g., obviously $\phi\not\models\phi\ror\bot$. While the relevant disjunction introduction rule $\ror$\textsf{I} we have in the system  is considerably weak,
%
%
%
%
the relevant disjunction $\ror$ does admit the usual elimination rule under the same side condition as that for $\vee\textsf{E}$. We will show in the next proposition that such restricted elimination rule is derivable from monotonicity rule of the relevant disjunction $\ror\textsf{Mon}$. The rules $\ror\textsf{Com}$ and $\ror\textsf{Ass}$ are added in the system also in order to compensate the weakness of the nonstandard introduction and elimination rule for $\ror$.
The two rules $\sor\textsf{E}_{\ror}$ and $\ror\sor\textsf{Tr}$ together simulate the evident equivalence 
\(\phi\sor\psi\equiv\phi\ior\psi\ior(\phi\ror\psi)\) with $\sor\textsf{E}_{\ror}$ simulating  the left to right direction and $\ror\sor\textsf{Tr}$ simulating the right to left direction.
The rule $\ror\bot$\textsf{E} characterizes the fact that each disjunct in a relevant disjunction has to be satisfied by a nonempty team (if the starting team is not empty).
The distributive rule $\dstr\ror\sor$ is actually invertible, as we will show in the next proposition that lists also some other useful clauses for our system.

\begin{proposition}\label{der_rule_prop}
\begin{enumerate}[(i)]
\item\label{der_rule_dstr_rore} Let $\Delta$ be a set of classical formulas. If $\Delta,\phi\vdash\chi$ and $\Delta,\psi\vdash\chi$, then $\Delta,\phi\ror\psi\vdash\chi$. 
\item\label{der_rule_dstr_roridempotent} $\phi\ror\phi\dashv\vdash\phi$.
\item\label{der_rule_dstr_rorsor} $\phi\ror(\psi\sor\chi)\dashv\vdash(\phi\ror\psi)\sor(\phi\ror\chi)$.
\item\label{der_rule_exfalso} $\bot\vdash\phi$.
\item\label{der_rule_bote} $\phi\ror\alpha,\neg\alpha\vdash\psi$

\end{enumerate}
\end{proposition}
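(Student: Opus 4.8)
The plan is to derive all five clauses inside the natural deduction system of \Cref{tab_rule_general,tab_rule_ror}, establishing them in the order (iv), (ii), (i), (iii), (v), since the later clauses reuse \emph{ex falso} and the idempotence of $\ror$. For (iv), from the single assumption $\bot$ I apply $\ror\textsf{I}$ with both premises equal to $\bot$ to get $\bot\ror\bot$, and then $\ror\bot\textsf{E}$ yields an arbitrary $\phi$. For the idempotence (ii), the direction $\phi\vdash\phi\ror\phi$ is one instance of $\ror\textsf{I}$; conversely, $\phi\ror\phi\vdash\phi$ follows by $\ror\sor\textsf{Tr}$ (which gives $\phi\sor\phi$) followed by $\vee\textsf{E}$ whose two minor derivations are the identity on $[\phi]$. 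Here the side condition of $\vee\textsf{E}$ is satisfied vacuously, since each minor derivation discharges $\phi$ and leaves no undischarged assumption.

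For the restricted elimination (i), I start from $\phi\ror\psi$ together with the classical context $\Delta$. Applying $\ror\textsf{Sub}$ to the given derivation $\Delta,\phi\vdash\chi$ produces $\chi\ror\psi$ — the side condition of $\ror\textsf{Sub}$ holds precisely because $\Delta$ consists of classical formulas. I then use $\ror\textsf{Com}$ to reach $\psi\ror\chi$, apply $\ror\textsf{Sub}$ again to $\Delta,\psi\vdash\chi$ to reach $\chi\ror\chi$, and finish by idempotence (ii) to conclude $\chi$.

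The invertibility (iii) is where the real work lies. The direction $\phi\ror(\psi\sor\chi)\vdash(\phi\ror\psi)\sor(\phi\ror\chi)$ is literally the rule $\dstr\ror\sor$. For the converse I would eliminate the outer $\sor$ in $(\phi\ror\psi)\sor(\phi\ror\chi)$ using $\sor\textsf{E}_{\ror}$ against the target $\theta:=\phi\ror(\psi\sor\chi)$, leaving three subgoals: $\phi\ror\psi\vdash\theta$, $\phi\ror\chi\vdash\theta$, and $(\phi\ror\psi)\ror(\phi\ror\chi)\vdash\theta$. The first two are handled by $\ror\textsf{Com}$ followed by a single $\ror\textsf{Sub}$ that weakens a disjunct through $\sor\textsf{I}$ (using $\psi\vdash\psi\sor\chi$, respectively $\chi\vdash\psi\sor\chi$). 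The third, which is the crux, I would handle by regrouping $(\phi\ror\psi)\ror(\phi\ror\chi)$ as $(\phi\ror\phi)\ror(\psi\ror\chi)$, collapsing $\phi\ror\phi$ to $\phi$ by $\ror\textsf{Sub}$ and idempotence, and then turning $\psi\ror\chi$ into $\psi\sor\chi$ via $\ror\sor\textsf{Tr}$ (again routed through $\ror\textsf{Com}$ and $\ror\textsf{Sub}$). I expect the main obstacle to be justifying these regroupings: $\ror\textsf{Ass}$ is stated in only one direction, so I first need to verify that full (two-sided) associativity, and hence free rearrangement of an iterated $\ror$, is derivable from $\ror\textsf{Com}$ and $\ror\textsf{Ass}$ together.

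Finally, for (v) the common mechanism is that the standing assumption $\neg\alpha$ collapses the $\alpha$-disjunct. For $\ast=\ror$ I use $\ror\textsf{Com}$ to bring $\alpha$ to the left of $\phi\ror\alpha$, apply $\ror\textsf{Sub}$ with the derivation $\alpha,\neg\alpha\vdash\bot$ (by $\neg\textsf{E}$, whose undischarged assumption $\neg\alpha$ is classical) to obtain $\bot\ror\phi$, restore the order by $\ror\textsf{Com}$ to $\phi\ror\bot$, and conclude an arbitrary $\psi$ by $\ror\bot\textsf{E}$; this is exactly the $\bot$-elimination behaviour. The delicate point I would flag is the asymmetry with $\sor$: because $\sor$ does not force its disjuncts to be satisfied by nonempty teams, $\phi\sor\alpha,\neg\alpha$ does not pin the team down to $\emptyset$, so the arbitrary-conclusion form is genuine only for $\ror$; for $\sor$ what is derivable is disjunctive syllogism $\phi\sor\alpha,\neg\alpha\vdash\phi$, obtained by $\vee\textsf{E}$ in which the $[\alpha]$-branch closes through $\neg\textsf{E}$ and the $[\phi]$-branch is trivial.
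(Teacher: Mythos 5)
Your proposal is correct, and it diverges from the paper's proof in two genuinely different ways, one of which matters. For (i), (ii) and (iv) the content is essentially the paper's (the paper proves (i) first and obtains the hard half of (ii) as a special case, while you prove (ii) directly and feed it into (i); it proves (iv) via $\neg\textsf{I}$/$\neg\textsf{E}$ while you use $\ror\textsf{I}$ and $\ror\bot\textsf{E}$ --- all of these are sound permutations). For (iii) the paper is leaner than you: for the converse direction it applies the classical rule $\vee\textsf{E}$ to the major premise $(\phi\ror\psi)\sor(\phi\ror\chi)$ (the side condition is vacuous there), so only your first two subgoals arise, each settled by $\sor\textsf{I}$ plus $\ror\textsf{Sub}$ exactly as you do; your third subgoal $(\phi\ror\psi)\ror(\phi\ror\chi)\vdash\phi\ror(\psi\sor\chi)$ is avoided entirely. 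Your route is nevertheless not a gap: the two-sided associativity you flag is derivable from the stated rules alone, e.g.\ $(\phi\ror\psi)\ror\chi\vdash\chi\ror(\phi\ror\psi)\vdash(\chi\ror\phi)\ror\psi\vdash\psi\ror(\chi\ror\phi)\vdash(\psi\ror\chi)\ror\phi\vdash\phi\ror(\psi\ror\chi)$ using only $\ror\textsf{Com}$ and $\ror\textsf{Ass}$, and rewriting inside a disjunct is available by routing $\ror\textsf{Com}$/$\ror\textsf{Ass}$ through $\ror\textsf{Sub}$, so your regrouping goes through; it is just more work than the paper needs.

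Your treatment of (v) is the more valuable deviation, and you are right on both counts. For $\ast=\vee$ the arbitrary-conclusion form is unsound: with $X=\{v\}$, $v(p)=1$, $v(q)=v(r)=0$, we have $X\models p\vee q$ and $X\models\neg q$ but $X\not\models r$, so by the soundness theorem $p\vee q,\neg q\not\vdash r$; only the disjunctive syllogism $\phi\vee\alpha,\neg\alpha\vdash\phi$ that you derive is available. This is in fact all that the paper's own proof of (v) establishes --- it reduces the $\ror$ case to the $\vee$ case via $\ror\sor\textsf{Tr}$ and then concludes $\phi\vee\alpha,\neg\alpha\vdash\phi$ --- so the clause as printed, with arbitrary $\psi$ and $\ast=\vee$, is an overstatement, and the reduction yields only $\phi\ror\alpha,\neg\alpha\vdash\phi$ for the $\ror$ case as well. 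That weaker form is insufficient for the later application in \Cref{der_rule_psiXYe}, where (v) is invoked with an arbitrary conclusion. Your direct derivation for $\ror$ --- $\ror\textsf{Com}$, then $\ror\textsf{Sub}$ with $\alpha,\neg\alpha\vdash\bot$, then $\ror\textsf{Com}$ and $\ror\bot\textsf{E}$ --- delivers exactly the full strength that the statement claims and that \Cref{der_rule_psiXYe} needs, so on this point your proof is the one the paper should have given.
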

\begin{proof}
For item (\ref{der_rule_dstr_rore}), since $\Delta,\phi\vdash\chi$, we derive by $\ror\textsf{Sub}$ that $\Delta,\phi\ror\psi\vdash\chi\ror\psi$. Similarly, from $\Delta,\psi\vdash\chi$ we derive $\Delta,\chi\ror\psi\vdash\chi\ror\chi$. Thus, $\Delta,\phi\ror\psi\vdash\chi\ror\chi$. By $\ror\sor$\textsf{Tr} and $\vee\textsf{E}$ we derive $\chi\ror\chi\vdash\chi\vee\chi\vdash\chi$. Hence we conclude $\Delta,\phi\ror\psi\vdash\chi$.

For item (\ref{der_rule_dstr_roridempotent}), the left to right direction is a special case of item (\ref{der_rule_dstr_rore}), and the right to left direction follows from $\ror\textsf{I}$.

For item (\ref{der_rule_dstr_rorsor}), the left to right direction follows from $\dstr\ror\sor$. For the other direction, by $\vee\textsf{E}$ it suffices to prove $\phi\ror\psi\vdash \phi\ror(\psi\sor\chi)$ and $\phi\ror\chi\vdash\phi\ror(\psi\sor\chi)$. But these follow easily from $\vee\textsf{I}$ and $\ror\textsf{Mon}$.


Item (\ref{der_rule_exfalso}) is proved by the usual argument by applying $\neg\textsf{I}$ and $\neg\textsf{E}$.

For item (\ref{der_rule_bote}), by $\neg$\textsf{E}, we have $\alpha,\neg\alpha\vdash \bot$, which gives $\phi\ror\alpha,\neg\alpha\vdash\phi\ror\bot$ by $\ror\textsf{Mon}$. Furthermore, we have $\phi\ror\bot\vdash\psi$ by $\ror\bot$\textsf{E}. Hence $\phi\ror\alpha,\neg\alpha\vdash\psi$.
%
%
\end{proof}

\begin{theorem}[Soundness]
For any set $\Gamma\cup\{\phi\}$ of \PUw-formulas, we have that $\Gamma\vdash\phi\Longrightarrow\Gamma\models\phi$.
\end{theorem}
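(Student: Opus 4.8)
The plan is to argue by induction on the height of the derivation witnessing $\Gamma\vdash\phi$, showing at each step that the rule applied preserves the semantic consequence relation $\models$. The base case, where $\phi\in\Gamma$ is an undischarged assumption, is immediate. For the inductive step I treat one rule at a time, using throughout the three structural facts recorded in Section~\ref{sec:pre}: every \PUw-formula enjoys the empty team property and the union closure property, and every classical formula is flat. Since $\models$ is monotone in its premise set, I may freely enlarge contexts so that all premises of a multi-premise rule are read against the same set of undischarged assumptions.

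Most rules fall out directly from the semantic clauses. For instance \topi, \ci, \ce are immediate; \sori\ uses the empty team property (let the other component hold on $\emptyset$); \rori\ splits a nonempty $X$ as $X=X\cup X$; $\ror\textsf{Com}$ and $\ror\textsf{Ass}$ follow from the symmetry and reassociation of nonempty splits; $\ror\sor\textsf{Tr}$ holds because a nonempty split is in particular a split and $\emptyset\models\phi\sor\psi$; $\ror\bot\textsf{E}$ holds because a relevant disjunct satisfying $\bot$ must be empty, forcing $X=\emptyset$ and then invoking the empty team property; and $\dstr\ror\sor$ is a routine verification of the distributive law, keeping track of the nonemptiness conditions. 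The rule $\sor\textsf{E}_{\ror}$ is sound \emph{without} any classical restriction: given $X\models\phi\sor\psi$ with $X=Y\cup Z$, $Y\models\phi$, $Z\models\psi$, exactly one of the three branches applies on the \emph{whole} team $X$ according as $Z=\emptyset$, $Y=\emptyset$, or both are nonempty, so the context is never pushed to a proper subteam.

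The substantive cases are precisely the rules carrying the side condition that the undischarged assumptions be classical: \sore, $\neg\textsf{I}$, \textsf{RAA}, and $\ror\textsf{Sub}$. The paradigmatic one is \sore. Suppose the undischarged classical context is $\Delta$, that $\Delta\models\phi\sor\psi$, $\Delta,\phi\models\chi$, and $\Delta,\psi\models\chi$; I must show $\Delta\models\chi$. Let $X\models\Delta$. From $X\models\phi\sor\psi$ obtain $X=Y\cup Z$ with $Y\models\phi$ and $Z\models\psi$. Because $\Delta$ consists of flat formulas, $X\models\Delta$ descends to the subteams $Y,Z\subseteq X$, so $Y\models\Delta\cup\{\phi\}$ and $Z\models\Delta\cup\{\psi\}$, whence $Y\models\chi$ and $Z\models\chi$; the union closure of $\chi$ then gives $X=Y\cup Z\models\chi$. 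This is exactly where both hypotheses on the logic are used: flatness of the classical context to transport it onto the subteams, and union closure of the conclusion to recombine.

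The remaining three cases follow the same template. For $\neg\textsf{I}$ and \textsf{RAA} one pushes the classical context down to \emph{singletons} rather than arbitrary subteams: if $\Delta,\alpha\models\bot$ with $\Delta$ classical and $X\models\Delta$, then any $v\in X$ would give $\{v\}\models\Delta$ by flatness, so $\{v\}\models\alpha$ is impossible and $X\models\neg\alpha$ follows; \textsf{RAA} is dual, using that on a singleton $\neg\alpha$ holds iff $\alpha$ fails together with the correspondence $\{v\}\models\alpha\iff v\models\alpha$. For $\ror\textsf{Sub}$ one argues as in \sore\ but against the relevant split: from $\Delta\models\phi\ror\psi$ with $X\models\Delta$ nonempty, take nonempty $Y,Z$ with $X=Y\cup Z$, $Y\models\phi$, $Z\models\psi$; flatness gives $Y\models\Delta$, so $Y\models\chi$ by $\Delta,\phi\models\chi$, and $X=Y\cup Z\models\chi\ror\psi$ is again a nonempty split. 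I expect the main obstacle to be bookkeeping rather than ideas: the classical side conditions must be made to do exactly the work of descending to subteams (or singletons), since that descent, combined with union closure, is the sole reason these otherwise unsound-looking elimination rules are admissible in the union closed setting, as the paper already flags for \sore.
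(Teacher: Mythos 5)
Your proof is correct and follows essentially the same route as the paper's: a rule-by-rule verification that every inference preserves $\models$, with the classical side conditions doing their work via flatness of the context and the union closure of \PUw-formulas. The only difference is one of emphasis — the paper writes out $\dstr\ror\sor$ in detail (which you dismiss as routine bookkeeping) and declares the remaining rules, including \sore and $\ror\textsf{Sub}$, ``easy to verify'', whereas you spell out precisely the flatness-plus-union-closure argument for \sore, $\neg\textsf{I}$, \textsf{RAA} and $\ror\textsf{Sub}$ that the paper leaves implicit (and flags elsewhere as the nontrivial feature of union closed logics); both verifications are sound.
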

\begin{proof}
The soundness of the rules in \Cref{tab_rule_general} and the first four rules in \Cref{tab_rule_ror} are easy to verify. 
The soundness of $\sor\textsf{E}_{\ror}$ and $\ror\sor$\textsf{Tr} follow from the fact that $X\models\phi\sor\psi$ if and only if $X\models\phi$ or $X\models\psi$ or $X\models\phi\ror\psi$. The rule $\ror\bot$\textsf{E} is also clearly sound, since the assumption $\phi\ror\bot$ is satisfied only by the empty team, which satisfies every formula $\psi$. We only verify the soundness of the  rule $\dstr\ror\sor$.

Assuming that $X\models \phi\ror(\psi\sor\chi)$ for some nonempty team $X$ we show that $X\models(\phi\ror\psi)\sor(\phi\ror\chi)$. By the assumption, there are nonempty teams $Y,Z\subseteq X$ such that $X=Y\cup Z$, $Y\models\phi$ and $Z\models \psi\vee\chi$. The latter implies that there are subteams $W,U\subseteq Z$ such that $Z=W\cup U$, $W\models\psi$ and $U\models \chi$. If $W=\emptyset$, then $U\neq\emptyset$ as $Z\neq \emptyset$. In this case $X=Y\cup U\models\phi\ror\chi$ and thus $X\models (\phi\ror\psi)\sor(\phi\ror\chi)$. Symmetrically, if $U=\emptyset$, then $W\neq\emptyset$ and $X\models (\phi\ror\psi)\sor(\phi\ror\chi)$ as well. Lastly, if $W,U\neq \emptyset$, then $Y\cup W\models\phi\ror\psi$ and $Y\cup U\models\phi\ror\chi$. Thus, we have that $(Y\cup W)\cup (Y\cup U)=X\models(\phi\ror\psi)\sor(\phi\ror\chi)$.
%
%
\end{proof}

The rest of this section is devoted to the proof of the completeness theorem of our system. We will show that every \PU-formula is provably equivalent to a formula in the normal form $\bigvee_{X\in \mathcal{X}}\Psi_X$ given by \Cref{nf-pu}.

\begin{lemma}\label{DNF_PU}
Let $\mathsf{N}=\{p_1,\dots,p_n\}$. Every \PUw-formula $\phi(\mathsf{N})$  is provably equivalent to a formula of the  form
\begin{equation}\label{NF_CTLe_eq}
\bigsor_{X\in\mathcal{X}}\Psi_{X},~\text{ where }~\Psi_{X}=\bigror_{v\in X}(p_{1}^{v(1)}\wedge\dots\wedge p_{n}^{v(n)}),
\end{equation}
 and $\mathcal{X}$ is a  finite set of $\mathsf{N}$-teams.
\end{lemma}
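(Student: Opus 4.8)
The plan is to prove this normal form lemma by induction on the structure of the \PUw-formula $\phi(\mathsf{N})$. The target normal form is a $\sor$-disjunction of formulas $\Psi_X$, each of which itself is a $\ror$-disjunction of the ``team-describing'' conjunctions $\delta_v := p_1^{v(1)}\wedge\dots\wedge p_n^{v(n)}$. Since every connective of \PUw (namely $\wedge$, $\sor$, $\ror$, and negated classical formulas) must be reduced to this shape, the heart of the argument is a case analysis showing that the normal form is preserved under each connective.

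I would proceed as follows. First, for the \textbf{atomic/base cases} (a variable $p_i$, the constants $\top$, $\bot$, and a negated classical formula $\neg\alpha$), each is equivalent to a classical formula, hence to a $\sor$-disjunction $\bigsor_{v}\delta_v$ over the valuations satisfying it; this is a degenerate normal form where each inner $\ror$-disjunction is a single conjunct (recalling $\delta_v\equiv\Psi_{\{v\}}$). The real work is the \textbf{inductive step}, where I assume $\phi_1\dashv\vdash\bigsor_{X\in\mathcal{X}}\Psi_X$ and $\phi_2\dashv\vdash\bigsor_{Y\in\mathcal{Y}}\Psi_Y$ and must renormalize each of $\phi_1\wedge\phi_2$, $\phi_1\sor\phi_2$, and $\phi_1\ror\phi_2$. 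The $\sor$-case is immediate: $(\bigsor_{X}\Psi_X)\sor(\bigsor_{Y}\Psi_Y)$ is already a $\sor$-disjunction of $\Psi_Z$'s. For the $\ror$-case, I would use the distributivity law \Cref{der_rule_prop}(\ref{der_rule_dstr_rorsor}) to push $\ror$ inside the outer $\sor$'s, reducing to the problem of normalizing $\Psi_X\ror\Psi_Y$; unfolding the definitions and using $\ror$\textsf{Ass}, $\ror$\textsf{Com}, together with \Cref{lem_rorsor_trans}, should express this again as a $\sor$-disjunction of $\Psi_Z$'s, where the relevant $Z$'s are unions $X'\cup Y'$ arising from splitting the index sets.

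The \textbf{conjunction case} is what I expect to be the main obstacle. After distributing $\wedge$ over the outer $\sor$'s (which requires a distributivity law for $\wedge$ over $\sor$ to be available or derivable), one is left to normalize a single conjunction $\Psi_X\wedge\Psi_Y$. Semantically $\Psi_X\wedge\Psi_Y$ is satisfied only by the empty team unless $X=Y$ (since $\Psi_X$ pins down the team to be $X$ or $\emptyset$ by \eqref{eq_psi_x_prop}), so the expected outcome is $\Psi_X\wedge\Psi_Y\dashv\vdash\Psi_X$ when $X=Y$ and $\dashv\vdash\bot$ otherwise. Proving the $X\neq Y$ subcase in the deductive system is precisely where \Cref{der_rule_psiXYe} is designed to be used: it yields $\Psi_X,\Psi_Y\vdash\phi$ for any $\phi$, hence $\Psi_X\wedge\Psi_Y\vdash\bot$, and then $\bot$ (an empty $\sor$-disjunction) sits inside the normal form. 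The genuinely delicate point is handling the interaction of the inner $\ror$-structure with $\wedge$: I would reduce a general conjunct-against-conjunct step to the classical level, where each $\delta_v\wedge\delta_u$ is either $\delta_v$ (if $v=u$) or provably $\bot$ (if $v\neq u$, since they disagree on some $p_i^{(\cdot)}$), using the classical rules available in the system via \Cref{der_rule_prop}(\ref{der_rule_exfalso}) and (\ref{der_rule_bote}). Assembling these, together with \Cref{der_rule_sormulti} to manage the multi-disjunct eliminations when collapsing redundant or contradictory disjuncts, yields the normal form; I would finally invoke soundness and the expressive completeness from \Cref{PT_exp_pw} only to confirm the semantic content, not to carry the syntactic derivation.
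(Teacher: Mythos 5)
Your base cases, your $\sor$-case, and your $\ror$-case all track the paper's proof (the paper likewise derives $\psi\ror\chi\dashv\vdash\bigsor_{X\in\mathcal{X},Y\in\mathcal{Y}}\Psi_{X\cup Y}$ via $\ror$\textsf{Sub}, $\dstr\ror\sor$ and idempotence). The genuine gap is the conjunction case, and it is fatal as written. Your first move there, ``distributing $\wedge$ over the outer $\sor$'s'', needs the entailment $(\phi\sor\psi)\wedge\chi\vdash(\phi\wedge\chi)\sor(\psi\wedge\chi)$, which is \emph{not sound} in team semantics (only its converse holds for \PUw-formulas, by union closure), hence not derivable in the sound system and not available even as an admissible shortcut. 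Moreover, the outcome you predict for the conjunction is semantically wrong. Take $v\neq u$, $\mathcal{X}=\{\{v\},\{u\}\}$ and $\mathcal{Y}=\{\{v,u\}\}$, so $\psi:=\Psi_{\{v\}}\sor\Psi_{\{u\}}$ and $\chi:=\Psi_{\{v,u\}}$. The team $\{v,u\}$ satisfies $\psi\wedge\chi$, so $\psi\wedge\chi\not\equiv\bot$; but since no member of $\mathcal{X}$ equals a member of $\mathcal{Y}$, your recipe outputs $(\Psi_{\{v\}}\wedge\chi)\sor(\Psi_{\{u\}}\wedge\chi)$, which \emph{is} provably (and semantically) $\bot$ by \Cref{der_rule_psiXYe} and \ce. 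The point this misses is the content of \Cref{comp_main_lm}: a team satisfying $\bigsor_{X\in\mathcal{X}}\Psi_X$ is an \emph{arbitrary union} of members of $\mathcal{X}$, so the conjunction is characterized not by the pairs with $X=Y$ but by the teams in
\[\mathcal{Z}=\Big\{\bigcup\mathcal{X}'\;\Big|\;\mathcal{X}'\subseteq\mathcal{X}\text{ and }\bigcup\mathcal{X}'=\bigcup\mathcal{Y}'\text{ for some }\mathcal{Y}'\subseteq\mathcal{Y}\Big\}.\]

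The paper proves $\psi\wedge\chi\dashv\vdash\bigsor_{Z\in\mathcal{Z}}\Psi_Z$, and the sound surrogate for your distribution step is precisely \Cref{der_rule_sormulti}, the lemma you relegate to a bookkeeping role: it converts ``$\Gamma$ together with a $\sor$-disjunction proves $\chi$'' into the family of goals in which the disjunction is replaced by each $\ror$-combination of its disjuncts. Applying it to both conjuncts reduces the left-to-right direction to $\bigror_{X\in\mathcal{X}'}\Psi_X,\bigror_{Y\in\mathcal{Y}'}\Psi_Y\vdash\bigsor_{Z\in\mathcal{Z}}\Psi_Z$ for all nonempty $\mathcal{X}'\subseteq\mathcal{X}$ and $\mathcal{Y}'\subseteq\mathcal{Y}$, hence by idempotence (\Cref{der_rule_prop}(\ref{der_rule_dstr_roridempotent})) to $\Psi_{\bigcup\mathcal{X}'},\Psi_{\bigcup\mathcal{Y}'}\vdash\bigsor_{Z\in\mathcal{Z}}\Psi_Z$; there your (correct) appeal to \Cref{der_rule_psiXYe} handles $\bigcup\mathcal{X}'\neq\bigcup\mathcal{Y}'$ and \sori handles the remaining case. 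The right-to-left direction uses \Cref{lem_rorsor_trans} rather than any distribution. Without this rerouting the inductive step for $\wedge$ does not go through, so the proposal as it stands does not yield a proof.
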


We shall postpone the technical proof of this above lemma till the end of this section.
The completeness then follows from some derivations in the system that uses the specific syntactic shape of the normal form. One important step in this proof is to obtain from the semantic side that the entailment $\bigsor_{X\in\mathcal{X}}\Psi_{X}\models\bigsor_{Y\in\mathcal{Y}}\Psi_{Y}$ of two formulas in the disjunctive normal form implies that each team $X$ in $\mathcal{X}$ is identical to the union of all teams from a subcollection of $\mathcal{Y}$. We now prove this last semantic property and also its converse direction.



\begin{lemma}\label{comp_main_lm}
For any nonempty finite sets $\mathcal{X}$ and $\mathcal{Y}$ of $\mathsf{N}$-teams, the following are equivalent:
\begin{enumerate}[(i)]
\item $\displaystyle\bigsor_{X\in\mathcal{X}}\Psi_{X}\models\bigsor_{Y\in\mathcal{Y}}\Psi_{Y}$.
\item For each $X\in\mathcal{X}$, there exists $\mathcal{Y}_X\subseteq \mathcal{Y}$ such that $X= \bigcup\mathcal{Y}_X$.
\end{enumerate}
\end{lemma}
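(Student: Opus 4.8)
The plan is to reduce both implications to a single semantic characterization of when a team satisfies a formula in disjunctive normal form, and then read off the equivalence. The key fact I would establish first is that, for any $\mathsf{N}$-team $W$,
\[
W\models\bigsor_{X\in\mathcal{X}}\Psi_{X}\iff W=\bigcup\mathcal{S}\ \text{ for some }\ \mathcal{S}\subseteq\mathcal{X}.
\]
This follows by unwinding the semantics of the flat disjunction $\sor$ in its finitary form (obtained by iterating the binary clause, which is available since $\sor$ is associative): $W\models\bigsor_{X\in\mathcal{X}}\Psi_X$ holds iff there are subteams $Z_X\subseteq W$ with $W=\bigcup_{X\in\mathcal{X}}Z_X$ and $Z_X\models\Psi_X$ for each $X$. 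By Equation (\ref{eq_psi_x_prop}), $Z_X\models\Psi_X$ forces $Z_X\in\{X,\emptyset\}$, so setting $\mathcal{S}=\{X\in\mathcal{X}: Z_X=X\}$ gives $W=\bigcup\mathcal{S}$; conversely, given $W=\bigcup\mathcal{S}$ one takes $Z_X=X$ for $X\in\mathcal{S}$ and $Z_X=\emptyset$ otherwise, using the empty team property.

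For the direction (ii)$\Rightarrow$(i), I would take any $W$ with $W\models\bigsor_{X\in\mathcal{X}}\Psi_X$ and apply the characterization to write $W=\bigcup\mathcal{S}$ for some $\mathcal{S}\subseteq\mathcal{X}$. Substituting $X=\bigcup\mathcal{Y}_X$ for each $X\in\mathcal{S}$ and setting $\mathcal{T}=\bigcup_{X\in\mathcal{S}}\mathcal{Y}_X\subseteq\mathcal{Y}$, one gets $W=\bigcup\mathcal{T}$, whence $W\models\bigsor_{Y\in\mathcal{Y}}\Psi_Y$ by the characterization again. This is just the set-theoretic identity that a union of unions is a union, transported through Equation (\ref{eq_psi_x_prop}).

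For (i)$\Rightarrow$(ii), I fix $X\in\mathcal{X}$. Since $X=\bigcup\{X\}$ is a union of a subfamily of $\mathcal{X}$, the characterization gives $X\models\bigsor_{X'\in\mathcal{X}}\Psi_{X'}$ (equivalently, one observes directly that $X\models\Psi_X$ and uses the witnessing split that assigns all of $X$ to the $X$-th disjunct and $\emptyset$ elsewhere). By hypothesis (i) then $X\models\bigsor_{Y\in\mathcal{Y}}\Psi_Y$, and the characterization yields some $\mathcal{Y}_X\subseteq\mathcal{Y}$ with $X=\bigcup\mathcal{Y}_X$, which is exactly (ii). The only points requiring care are the empty-team bookkeeping: the characterization must allow $\mathcal{S}=\emptyset$ (covering $W=\emptyset$), and an empty $\mathcal{Y}_X$ must be permitted precisely when $X=\emptyset$. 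Neither is a genuine obstacle, so I expect no hard step beyond stating the semantic characterization cleanly.
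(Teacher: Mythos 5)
Your proof is correct and follows essentially the same route as the paper's: both directions hinge on Equation (\ref{eq_psi_x_prop}) together with the finitary semantics of $\sor$, exactly as in the paper's argument. The only cosmetic difference is that you extract the key fact ($W\models\bigsor_{X\in\mathcal{X}}\Psi_X$ iff $W$ is a union of some subfamily of $\mathcal{X}$) as an explicit standalone characterization, whereas the paper inlines this same reasoning into each direction.
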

\begin{proof}
(i)$\Longrightarrow$(ii): For each $X_0\in \mathcal{X}$, we have  $X_0\models\Psi_{X_{0}}$ by  Equation (\ref{eq_psi_x_prop}). Thus $X_{0}\models\bigsor_{X\in\mathcal{X}}\Psi_{X}$,
which by (i) implies that $ X_{0}\models \bigsor_{Y\in\mathcal{Y}}\Psi_{Y}$.
This means that for each $Y\in\mathcal{Y}$, there exists $Z_Y\subseteq X_{0}$ such that  $X_{0}=\bigcup_{Y\in\mathcal{Y}}Z_Y$ and each $Z_Y\models \Psi_{Y}$. The latter implies, by Equation (\ref{eq_psi_x_prop}) again, that $Z_Y=Y$ or $Z_Y=\emptyset$. Thus  we obtain $ X_{0}=\bigcup_{Y\in\mathcal{Y}_{X}}Y$ for some $\mathcal{Y}_{X}\subseteq \mathcal{Y}$.

(ii)$\Longrightarrow$(i): Suppose $Z$ is any $\mathsf{N}$-team  satisfying $Z\models\bigsor_{X\in\mathcal{X}}\Psi_{X}$. Then, by Equation (\ref{eq_psi_x_prop}), there exists $\mathcal{X}'\subseteq \mathcal{X}$ such that $Z=\bigcup_{X\in\mathcal{X}'}X$. By (ii), for each $X\in\mathcal{X}'$, there exists $\mathcal{Y}_X\subseteq \mathcal{Y}$ such that $X=\bigcup\mathcal{Y}_X$. Thus, we have that $Z=\bigcup_{X\in\mathcal{X}'}\bigcup\mathcal{Y}_X=\bigcup\mathcal{Y}'$, where $\mathcal{Y}'=\bigcup_{X\in\mathcal{X}'}\mathcal{Y}_X\subseteq \mathcal{Y}$. Hence, $Z\models \bigsor_{Y\in\mathcal{Y'}}\Psi_{Y}$ by Eqaution (\ref{eq_psi_x_prop}) again,  thereby $Z\models \bigsor_{Y\in\mathcal{Y}}\Psi_{Y}$.
\end{proof}

Next, we prove a technical lemma that concerns an interesting interaction between the two disjunctions $\vee$ and $\ror$.

\begin{lemma}\label{lem_rorsor_trans}
Let $\mathcal{Y}$ be a finite set with each $Y\in \mathcal{Y}$ being a finite set of indices. 
Then $\bigroril_{i\in \bigcup\mathcal{Y}}\phi_i\vdash\bigvee_{Y\in\mathcal{Y}}\bigroril_{i\in Y}\phi_i$.
\end{lemma}
\begin{proof}
Note that elements in $\mathcal{Y}$ are not necessarily disjoint. We thus first derive by applying \rori, $\ror$\textsf{Ass} and $\ror$\textsf{Com} that $\bigroril_{i\in \bigcup\mathcal{Y}}\phi_i\vdash\bigroril_{Y\in\mathcal{Y}}\bigroril_{i\in Y}\phi_i$. Next, by repeatedly applying $\ror\sor$\textsf{Tr} and \sore, we derive that $\bigroril_{Y\in\mathcal{Y}}\bigroril_{i\in Y}\phi_i\vdash \bigvee_{Y\in\mathcal{Y}}\bigroril_{i\in Y}\phi_i$, which then implies the desired clause.
\end{proof}

Now, we give the  proof of the completeness theorem of our system.

\begin{theorem}[Completeness]\label{PU_completeness}
For any set $\Gamma\cup\{\phi\}$  of \PUw-formulas, we have that $\Gamma\models\phi \iff \Gamma\vdash\phi$. 
\end{theorem}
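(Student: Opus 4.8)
The $\Longleftarrow$ direction is exactly the soundness theorem already proved, so I would concentrate on $\Longrightarrow$. The plan is to reduce an arbitrary consequence to a consequence between two formulas in the disjunctive normal form of \Cref{DNF_PU}, and then to read off a derivation from the purely combinatorial characterization supplied by \Cref{comp_main_lm}. First I would pass to a finite, single-premise problem: assuming $\Gamma\models\phi$, compactness (\Cref{compactness_thm}) gives a finite $\Gamma_0\subseteq\Gamma$ with $\Gamma_0\models\phi$, and for $\gamma:=\bigwedge\Gamma_0$ we have $\Gamma_0\vdash\gamma$ and $\gamma\vdash\psi$ for every $\psi\in\Gamma_0$ by \ci and \ce. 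Hence it suffices to prove $\gamma\vdash\phi$ from $\gamma\models\phi$, after which monotonicity of $\vdash$ yields $\Gamma\vdash\phi$. I would fix $\mathsf{N}$ to be the finite set of all variables occurring in $\gamma$ or $\phi$, so that both are formulas over $\mathsf{N}$.

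Next I would move to normal form. By \Cref{DNF_PU} there are finite sets $\mathcal{X},\mathcal{Y}$ of $\mathsf{N}$-teams with $\gamma\dashv\vdash\bigsor_{X\in\mathcal{X}}\Psi_X$ and $\phi\dashv\vdash\bigsor_{Y\in\mathcal{Y}}\Psi_Y$. By the soundness theorem these provable equivalences are semantic equivalences, so from $\gamma\models\phi$ together with locality I obtain $\bigsor_{X\in\mathcal{X}}\Psi_X\models\bigsor_{Y\in\mathcal{Y}}\Psi_Y$ on $\mathsf{N}$-teams. Since $\gamma\vdash\bigsor_{X\in\mathcal{X}}\Psi_X$ and $\bigsor_{Y\in\mathcal{Y}}\Psi_Y\vdash\phi$, it now suffices to derive $\bigsor_{X\in\mathcal{X}}\Psi_X\vdash\bigsor_{Y\in\mathcal{Y}}\Psi_Y$. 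If $\mathcal{X}=\emptyset$, or more generally if $\bigsor_{X\in\mathcal{X}}\Psi_X\dashv\vdash\bot$ (which, via Equation~(\ref{eq_psi_x_prop}), is forced when $\mathcal{Y}=\emptyset$, as then every team satisfying $\bigsor_{X\in\mathcal{X}}\Psi_X$ must be empty), this follows from \exfalso, i.e.\ \Cref{der_rule_prop}(\ref{der_rule_exfalso}); so I may assume $\mathcal{X},\mathcal{Y}\neq\emptyset$.

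The core step is then to convert the set-theoretic content of \Cref{comp_main_lm} into a derivation. Applying that lemma to $\bigsor_{X\in\mathcal{X}}\Psi_X\models\bigsor_{Y\in\mathcal{Y}}\Psi_Y$ yields, for each $X\in\mathcal{X}$, a subfamily $\mathcal{Y}_X\subseteq\mathcal{Y}$ with $X=\bigcup\mathcal{Y}_X$. By repeated applications of $\vee\textsf{E}$, whose side condition holds vacuously here because the only open assumption in each branch is the discharged $\Psi_X$, it is enough to derive $\Psi_X\vdash\bigsor_{Y\in\mathcal{Y}}\Psi_Y$ for each $X\in\mathcal{X}$. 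Writing $\Psi_X=\bigror_{v\in X}\Psi_{\{v\}}$ and recalling $X=\bigcup\mathcal{Y}_X$, I would instantiate \Cref{lem_rorsor_trans} with $\phi_v:=\Psi_{\{v\}}$ and the family $\mathcal{Y}_X$ (each of whose members is a finite set of valuations, i.e.\ of indices) to get $\Psi_X=\bigroril_{v\in\bigcup\mathcal{Y}_X}\Psi_{\{v\}}\vdash\bigsor_{Y\in\mathcal{Y}_X}\bigroril_{v\in Y}\Psi_{\{v\}}=\bigsor_{Y\in\mathcal{Y}_X}\Psi_Y$, and then weaken to $\bigsor_{Y\in\mathcal{Y}}\Psi_Y$ by \sori. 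Chaining $\gamma\vdash\bigsor_{X\in\mathcal{X}}\Psi_X\vdash\bigsor_{Y\in\mathcal{Y}}\Psi_Y\vdash\phi$ completes the argument.

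I expect the main obstacle to be precisely this last conversion: turning the decomposition $X=\bigcup\mathcal{Y}_X$ produced by \Cref{comp_main_lm} into an honest derivation. The decisive observation is that $\Psi_X$ is literally the relevant disjunction of the singleton normal forms $\Psi_{\{v\}}$ over $v\in X$, so the union $X=\bigcup\mathcal{Y}_X$ is exactly the hypothesis on which \Cref{lem_rorsor_trans} transfers a $\ror$ into a $\sor$; once this is seen, everything else is routine bookkeeping with compactness, the normal form, soundness, and locality.
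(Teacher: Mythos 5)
Your proposal is correct and follows essentially the same route as the paper's own proof: compactness to reduce to a single finite premise, \Cref{DNF_PU} to pass to disjunctive normal forms, soundness to transfer the entailment to the normal forms, \Cref{comp_main_lm} to extract the decompositions $X=\bigcup\mathcal{Y}_X$, and then \Cref{lem_rorsor_trans} with \sori and $\vee\textsf{E}$ to assemble the derivation. Your handling of the degenerate case $\mathcal{Y}=\emptyset$ is in fact slightly more careful than the paper's (which asserts $\mathcal{X}=\emptyset$ where $\mathcal{X}=\{\emptyset\}$ is also possible), but the argument is otherwise the same.
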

\begin{proof}
It suffices to prove the left to right direction. Suppose $\Gamma\models\phi$.
By the compactness theorem (\Cref{compactness_thm}) we may assume that $\Gamma$ is a finite set. Let $\psi=\bigwedge\Gamma$, and  suppose $\phi,\psi$ are formulas in $\mathsf{N}=\{p_1,\dots,p_n\}$. By  \Cref{DNF_PU},  
\begin{equation*}
\psi\dashv\vdash \bigsor_{X\in \mathcal{X}}\Psi_{X}\quad\text{ and }\quad\phi\dashv\vdash \bigsor_{Y\in\mathcal{Y}}\Psi_{Y}
\end{equation*}
 for some finite sets $\mathcal{X}$ and $\mathcal{Y}$ of $\mathsf{N}$-teams. The soundness theorem implies that
\begin{equation}\label{pu_compl_eq2}
\bigsor_{X\in \mathcal{X}}\Psi_{X}\models\bigsor_{Y\in \mathcal{Y}}\Psi_{Y}.
\end{equation}

If $\mathcal{X}=\emptyset$, then $\psi\dashv\vdash\bot$, and we derive  $\psi\vdash\phi$ by \Cref{der_rule_prop}(\ref{der_rule_exfalso}). If $\mathcal{Y}=\emptyset$, then $\phi\dashv\vdash\bot$. In view of (\ref{pu_compl_eq2}), it must be that $\mathcal{X}=\emptyset$ as well. Thus $\psi\dashv\vdash\bot$ giving that $\psi\vdash \phi$.

If $\mathcal{X},\mathcal{Y}\neq \emptyset$, then by Lemma \ref{comp_main_lm},  for each $X\in \mathcal{X}$ we have that $X=\bigcup\mathcal{Y}_X$ for some $\mathcal{Y}_X\subseteq\mathcal{Y}$.  
Thus, we derive $\Psi_{X}\vdash \bigsor_{Y\in \mathcal{Y}_X}\Psi_{Y}\vdash \bigsor_{Y\in \mathcal{Y}}\Psi_{Y}$ by \Cref{lem_rorsor_trans} and \sori.
Finally, we obtain
\(
\bigsor_{X\in \mathcal{X}}\Psi_{X}\vdash\bigsor_{Y\in \mathcal{Y}}\Psi_{Y}
\) by $\sor$\textsf{E},
thereby $\psi\vdash\phi$.
\end{proof}

Before we supply the proof of \Cref{DNF_PU}, let us first give an example of the  applications of our system of \PUw, in the context of the implication problem of anonymity atoms (or afunctional dependencies). 
Anonymity atoms $\mathsf{p}\Upsilon\mathsf{q}$ are definable in \PU, and recall from \Cref{anonymity2ror_df} the concrete definitions.
The implication problem of anonymity atoms (i.e., the problem of whether $\Gamma\models\phi$ for  a set $\Gamma\cup\{\phi\}$ of anonymity atoms) is shown in \cite{Vaananen_anonymity19} to be completely axiomatized by the rules listed in the next example (read the clauses in the example as rules). We now show that these rules are derivable in the system of \PUw (via the translation given in \Cref{anonymity2ror_df}).

\begin{example}
Let $\mathsf{p},\mathsf{q},\mathsf{r},\mathsf{s},\mathsf{p}',\mathsf{q}',\mathsf{r}'$ be sequences of propositional variables.
\begin{description}
\item[(i)] $\mathsf{pqr}\Upsilon \mathsf{p'q'r'}\vdash \mathsf{qpr}\Upsilon \mathsf{p'q'r'}\wedge \mathsf{pqr}\Upsilon \mathsf{q'p'r'}$ (permutation)
\item[(ii)] $\mathsf{p}\mathsf{q}\Upsilon \mathsf{r} \vdash \mathsf{p}\Upsilon \mathsf{rs}$ (monotonicity)
\item[(iii)] $\mathsf{p}\mathsf{q}\Upsilon \mathsf{r}\mathsf{q}\vdash \mathsf{p}\mathsf{q}\Upsilon \mathsf{r}$ (weakening)
\item[(iv)] $\mathsf{p}\Upsilon \langle\rangle\vdash\bot$
\end{description}
\end{example}
\begin{proof}
Items (i) and (iv) are clear. For item (ii), noting that $\mathsf{p}\Upsilon \mathsf{rs}:=\mathsf{p}\Upsilon \mathsf{r}\vee \mathsf{p}\Upsilon \mathsf{s}$, by $\vee\textsf{I}$ it suffices to show $\mathsf{p}\mathsf{q}\Upsilon \mathsf{r} \vdash\mathsf{p}\Upsilon \mathsf{r}$. Let $\mathsf{r}=\langle r_1\dots r_n\rangle$. By $\vee\textsf{E}$ and $\vee\textsf{I}$, it further suffices to show that for each $1\leq i\leq n$, $\mathsf{p}\mathsf{q}\Upsilon r_i\vdash \mathsf{p}\Upsilon r_i$, which is
\[\bigvee_{v\in 2^{\mathsf{K}},u\in 2^{\mathsf{M}}}\!(p_1^{v(1)} \wedge\dots\wedge p_k^{v(k)}\wedge q_1^{u(1)} \wedge\dots\wedge q_m^{u(m)}\wedge  \Upsilon r_i)\vdash\! \bigvee_{v\in 2^{\mathsf{K}}}(p_1^{v(1)} \wedge\dots\wedge p_k^{v(k)}\wedge   \Upsilon r_i),\]
where $\mathsf{K}=\{p_1,\dots,p_k\}$ and $\mathsf{M}=\{q_1,\dots,q_m\}$.
But this follows easily from $\wedge \textsf{E}$.

For item (iii), we show $\mathsf{p}\mathsf{q}\Upsilon \mathsf{r}\vee\mathsf{p}\mathsf{q}\Upsilon\mathsf{q}\vdash \mathsf{p}\mathsf{q}\Upsilon \mathsf{r}$. By $\vee \textsf{E}$ and \Cref{der_rule_prop}(\ref{der_rule_exfalso}), it suffices to show that $\mathsf{p}\mathsf{q}\Upsilon\mathsf{q}\vdash\bot$. Since $\mathsf{pq}\Upsilon\mathsf{q}=\mathsf{pq}\Upsilon q_1\vee\dots\vee \mathsf{pq}\Upsilon q_m$, it suffices to show that $\mathsf{pq}\Upsilon q_i\vdash\bot$ for each $1\leq i\leq m$, i.e.,
\begin{equation}\label{exm_anm_eq}
\bigvee_{v\in 2^{\mathsf{K}},u\in 2^{\mathsf{M}}}(p_1^{v(1)} \wedge\dots\wedge p_k^{v(k)}\wedge q_1^{u(1)} \wedge\dots\wedge q_m^{u(m)}\wedge (q_i\ror\neg q_i))\vdash \bot.
\end{equation}
Now, by \Cref{der_rule_prop}(\ref{der_rule_bote}) we have that $q_i^{u(i)}\wedge (q_i\ror\neg q_i)\vdash\bot$ for each $u\in 2^{\mathsf{M}}$. Thus, in  (\ref{exm_anm_eq}) each disjunct of the  formula on the left-hand-side of the turnstile implies $\bot$, from which we conclude that (\ref{exm_anm_eq}) holds by \sore.
%
\end{proof}

Finally, we give the proof of the normal form lemma, \Cref{DNF_PU}, which requires a few further lemmas.
The first one shows that a generalized version of the rule $\sor\textsf{E}_{\ror}$ with disjunctions of multiple formulas is derivable in our system.


\begin{lemma}\label{der_rule_sormulti}
For any nonempty index set $I$, we have that 
\[\Gamma,\bigsor_{i\in I}\phi_i\vdash\chi \iff \Gamma,\bigveedot_{i\in I_0}\phi_i\vdash\chi\text{ for all nonempty sets }I_0\subseteq I.\]
\end{lemma}
\begin{proof}
The direction from left to right follows easily from $\ror\sor$\textsf{Tr} and \sori. We prove the other direction by induction on $|I|$. The case $|I|= 1$ is  trivial. 
Now, if $I=J\cup\{k\}$, then we have that\allowdisplaybreaks
{\small\begin{align*}
&\quad\forall I_0\subseteq I,~I_0\neq\emptyset:\Gamma,\bigror_{i\in I_0}\phi_i\vdash\chi\\
\Longrightarrow&\quad\forall J_0\subseteq J,~J_0\neq\emptyset:\Gamma,\bigror_{i\in J_0}\phi_i\vdash\chi~~\&~~\Gamma,\phi_{k}\vdash\chi~\&~ \forall J_1\subseteq J,~J_1\neq\emptyset:\Gamma,\phi_k\ror\bigror_{j\in J_1}\phi_j\vdash\chi\\
\Longrightarrow&\quad\forall J_0\subseteq J,~J_0\neq\emptyset:\Gamma,\bigror_{i\in J_0}\phi_i\vdash\chi~~\&~~\Gamma,\phi_{k}\vdash\chi
~~\&~~ \forall J_1\subseteq J,~J_1\neq\emptyset:\Gamma,\bigror_{j\in J_1}(\phi_{k}\ror\phi_j)\vdash\chi\tag{since $\displaystyle\bigror_{j\in J_1}(\phi_{k}\ror\phi_j)\vdash\phi_k\ror\bigror_{j\in J_1}\phi_j$ by $\ror$\textsf{Ass}, $\ror$\textsf{Com} and \Cref{der_rule_prop}(\ref{der_rule_dstr_roridempotent})}\\
\Longrightarrow&\quad\Gamma,\bigvee_{i\in J}\phi_i\vdash\chi~~\&~~\Gamma,\phi_{k}\vdash\chi~~\&~~ \Gamma,\bigsor_{j\in J}(\phi_j\ror\phi_{k})\vdash\chi\tag{induction hypothesis}\\
\Longrightarrow&\quad\Gamma,\bigsor_{i\in J}\phi_i\vdash\chi~~\&~~\Gamma,\phi_{k}\vdash\chi~~~\&~~ \Gamma,(\bigsor_{j\in J}\phi_j)\ror\phi_{k}\vdash\chi\tag{$\dstr\ror\sor$}\\
\Longrightarrow&\quad\Gamma,(\bigsor_{i\in J}\phi_i)\sor\phi_{k}\vdash\chi\tag{$\sor\textsf{E}_{\ror}$}\\
\Longrightarrow&\quad\Gamma,\bigsor_{i\in I}\phi_i\vdash\chi\tag{since $I=J\cup\{k\}$}.
\end{align*}}
\end{proof}

Recall  that the formula $\Psi_X$ in the normal form defines the team $X$ modulo the empty team in the sense of Equation (\ref{eq_psi_x_prop}) from the proof of \Cref{PU_excmp} in Section \ref{sec:expr_comp}. Therefore for distinct teams $X$ and $Y$, the two formulas $\Psi_X$ and $\Psi_Y$ are contradictory to each other. We now prove this fact in our system, and  the proof of \Cref{DNF_PU} follows.

\begin{lemma}\label{der_rule_psiXYe}
If $X$ and $Y$ are two distinct  $\mathsf{N}$-teams, then $\Psi_X,\Psi_Y\vdash\phi$.
\end{lemma}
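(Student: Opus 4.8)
The plan is to turn the semantic fact (Equation \ref{eq_psi_x_prop}) that $\Psi_X$ pins down the team $X$ up to the empty team into a syntactic derivation of an arbitrary $\phi$. Write $\theta_v:=p_{1}^{v(1)}\wedge\dots\wedge p_{n}^{v(n)}$ for the complete conjunction associated with a valuation $v$, so that $\Psi_X=\bigror_{v\in X}\theta_v$ and each $\theta_v$ is a classical formula. If either team is empty, say $Y=\emptyset$, then $\Psi_Y=\bot$ and the claim is immediate from \Cref{der_rule_prop}(\ref{der_rule_exfalso}); so I assume both $X$ and $Y$ are nonempty. Since $X\neq Y$ and the two hypotheses play symmetric roles (via $\ror$\textsf{Com}), I may assume without loss of generality that there is a valuation $v_0\in X\setminus Y$.

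The first key step is to distill from $\Psi_Y$ the classical consequence $\neg\theta_{v_0}$. For each $w\in Y$ we have $w\neq v_0$, so $\theta_w$ and $\theta_{v_0}$ disagree on some variable $p_i$; extracting the two conflicting conjuncts by \ce and applying $\neg$\textsf{E} gives $\theta_w,\theta_{v_0}\vdash\bot$, whence $\theta_w\vdash\neg\theta_{v_0}$ by $\neg$\textsf{I} (the side condition holds, as the only undischarged assumption $\theta_w$ is classical). Since $\neg\theta_{v_0}$ is classical, a straightforward induction on $|Y|$ using \Cref{der_rule_prop}(\ref{der_rule_dstr_rore}), together with $\ror$\textsf{Com} and $\ror$\textsf{Ass} to reassociate, then yields $\Psi_Y=\bigror_{w\in Y}\theta_w\vdash\neg\theta_{v_0}$.

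The second key step is to use $\neg\theta_{v_0}$ to collapse the offending disjunct of $\Psi_X$. If $X=\{v_0\}$, then $\Psi_X=\theta_{v_0}$ and we are done at once by $\neg$\textsf{E}. Otherwise, rearranging with $\ror$\textsf{Com} and $\ror$\textsf{Ass} gives $\Psi_X\vdash\theta_{v_0}\ror\Psi'$, where $\Psi'=\bigror_{v\in X\setminus\{v_0\}}\theta_v$. Working under the classical assumption $\neg\theta_{v_0}$ supplied by the previous step, the subderivation $[\theta_{v_0}],\neg\theta_{v_0}\vdash\bot$ (by $\neg$\textsf{E}) meets the side condition of $\ror$\textsf{Sub}, so $\theta_{v_0}\ror\Psi'\vdash\bot\ror\Psi'$. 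Finally $\ror$\textsf{Com} turns this into $\Psi'\ror\bot$, and $\ror\bot$\textsf{E} delivers the arbitrary conclusion $\phi$. Chaining these derivations gives $\Psi_X,\Psi_Y\vdash\phi$.

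The only delicate point — and what I expect to be the main obstacle — is that both \Cref{der_rule_prop}(\ref{der_rule_dstr_rore}) and the rule $\ror$\textsf{Sub} require their side assumptions to be classical, which forbids a naive case analysis that would carry the non-classical formula $\Psi_Y$ in the context. This is exactly why the argument is organized around first extracting the classical formula $\neg\theta_{v_0}$ from $\Psi_Y$ and only then feeding it, as a legitimate classical side assumption, into the $\ror$\textsf{Sub} step that annihilates the $v_0$-disjunct of $\Psi_X$.
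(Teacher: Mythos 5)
Your proof is correct and matches the paper's own argument almost step for step: both dispose of the empty cases via \Cref{der_rule_prop}(\ref{der_rule_exfalso}), pick $v_0\in X\setminus Y$ without loss of generality, extract the classical formula $\neg\theta_{v_0}$ (the paper's $\neg\Psi_{\{v_0\}}$) from $\Psi_Y$ disjunct-by-disjunct using classical reasoning plus \Cref{der_rule_prop}(\ref{der_rule_dstr_rore}), and then feed it back as a classical side assumption to collapse the $v_0$-disjunct of $\Psi_X$. The only difference is cosmetic: where the paper cites \Cref{der_rule_prop}(\ref{der_rule_bote}) to obtain $\Psi_X,\neg\theta_{v_0}\vdash\phi$, you re-derive that fact inline via $\ror\textsf{Sub}$, $\ror\textsf{Com}$ and $\ror\bot$\textsf{E}, which is a sound derivation of the same derived rule (and indeed delivers the arbitrary conclusion $\phi$ directly).
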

\begin{proof}
Let $\mathsf{N}=\{p_1,\dots,p_n\}$. If $X=\emptyset$ or $Y=\emptyset$, then $\Psi_X=\bot$ or $\Psi_Y=\bot$, and $\bot\vdash\phi$ follows from \Cref{der_rule_prop}(\ref{der_rule_exfalso}). Now assume that $X,Y\neq\emptyset$. Since $X\neq Y$, there exists (w.l.o.g.) some $v\in X\setminus Y$. By \Cref{der_rule_prop}(\ref{der_rule_bote}), we have $\Psi_{\{v\}}\ror\Psi_{X\setminus\{v\}}, \neg\Psi_{\{v\}}\vdash\phi$, i.e., $\Psi_X,\neg\Psi_{\{v\}}\vdash\phi$. To derive $\Psi_X,\Psi_Y\vdash\phi$ it then suffices to derive  $\Psi_Y\vdash\neg\Psi_{\{v\}}$. 
 By \Cref{der_rule_prop}(\ref{der_rule_dstr_rore}), this reduces to showing that for each $u\in Y$, $\Psi_{\{u\}}\vdash \neg\Psi_{\{v\}}$, which is equivalent (by the usual rules for classical formulas) to 
\[p_1^{u(1)},\dots, p_n^{u(n)}\vdash \neg p_1^{v(1)}\sor\dots\sor \neg p_n^{v(n)}.\]
 We have $u\neq v$ by the assumption, thus $p_i^{u(i)}=\neg p_i^{v(i)}$ for some $1\leq i\leq n$, from which and \sori the above clause follows.
\end{proof}

 

\begin{proof}[Proof of \Cref{DNF_PU}]
We  prove the lemma by induction on the complexity of $\phi$. 
If $\phi(p_{1},\dots,p_{n})=p_{i}$, then we can prove by the usual rules of classical formulas (which are all present or derivable in our system) that
\begin{align*}
p_{i}&\dashv\vdash\mathop{\bigsor_{v\in 2^{\mathsf{N}\setminus\{p_i\}}}}(p_{1}^{v(1)}\wedge\dots\wedge p_{i-1}^{v(i-1)}\wedge p_{i}\wedge p_{i+1}^{v(i+1)}\wedge\dots\wedge p_{n}^{v(n)})\\
&\dashv\vdash\bigvee_{\{u\}\in \mathcal{X}_i}\Psi_{\{u\}},\text{ where }\mathcal{X}_i=\{\{u\}\mid u\in  2^{\mathsf{N}},~u(i)=1\}.
\end{align*}
If $\phi=\bot$, then trivially $\bot\dashv\vdash\bigsor\emptyset=\bot$.  If $\phi=\top$,  we derive similarly by the  rules of classical formulas that
\[\top\dashv\vdash\bigsor_{v\in 2^{\mathsf{N}}}(p_{1}^{v(1)}\wedge\dots\wedge p_{n}^{v(n)})\dashv\vdash\bigvee_{\{v\}\in \mathcal{X}_\top}\Psi_{\{v\}},\text{ where }\mathcal{X}_\top=\{\{v\}\mid v\in 2^{\mathsf{N}}\}.\]


Suppose $\alpha(\mathsf{N})$ is a classical formula, and $\alpha\dashv\vdash\bigsor_{X\in\mathcal{X}}\Psi_{X}$.  We show that $\neg\alpha\dashv\vdash\bigsor_{v\in 2^{\mathsf{N}}\setminus\bigcup\mathcal{X}}\Psi_{\{v\}}$. 
It is sufficient to prove that $\bigsor_{X\in\mathcal{X}}\Psi_{X}\dashv\vdash\bigsor_{v\in \bigcup\mathcal{X}}\Psi_{\{v\}}$, which then implies, by the rules of negation $\neg$ and other usual rules  of classical formulas, that $\neg\alpha\dashv\vdash\neg\bigsor_{v\in \bigcup\mathcal{X}}\Psi_{\{v\}}\dashv\vdash\bigsor_{v\in 2^{\mathsf{N}}\setminus\bigcup\mathcal{X}}\Psi_{\{v\}}$. Now, 
we first have by the soundness theorem that $\alpha\dashv\vdash\bigsor_{X\in\mathcal{X}}\Psi_{X}$ implies that $\alpha\equiv\bigsor_{X\in\mathcal{X}}\Psi_{X}$. Then, observe that for each  $v\in \bigcup\mathcal{X}$, $\{v\}\in \mathcal{X}$. Indeed, by Equation (\ref{eq_psi_x_prop}) in Section \ref{sec:expr_comp}, it is easy to see that $\bigcup \mathcal{X}\models\bigsor_{X\in\mathcal{X}}\Psi_{X}$. Since the classical formula $\alpha$ is flat, we further have that $\{v\}\models \bigsor_{X\in\mathcal{X}}\Psi_{X}$, which by Equation (\ref{eq_psi_x_prop}) again implies that $\{v\}=X_0$ for some $X_0\in\mathcal{X}$, namely $\{v\}\in\mathcal{X}$.

 Thus, we derive $\Psi_{\{v\}}\vdash\bigsor_{X\in\mathcal{X}}\Psi_{X}$ by \sori. Hence we obtain $\bigsor_{v\in \bigcup\mathcal{X}}\Psi_{\{v\}}\vdash\bigsor_{X\in\mathcal{X}}\Psi_{X}$ by \sore. To prove the other direction,  for each $X\in\mathcal{X}$, since $X\subseteq\bigcup\mathcal{X}$, we derive by applying $\ror\sor$\textsf{Tr} and $\sor$\textsf{I} that
\[\Psi_X=\bigror_{u\in X}\Psi_{\{u\}}\vdash \bigsor_{u\in X}\Psi_{\{u\}}\vdash\bigsor_{v\in \bigcup\mathcal{X}}\Psi_{\{v\}}.
\]
Thus, we conclude that $\bigsor_{X\in\mathcal{X}}\Psi_{X}\vdash\bigsor_{v\in \bigcup\mathcal{X}}\Psi_{\{v\}}$ by applying $\sor$\textsf{E}.

\vspace{0.5\baselineskip}

Suppose $\psi(\mathsf{N})$ and $\chi(\mathsf{N})$ satisfy $\psi\dashv\vdash\bigsor_{X\in\mathcal{X}}\Psi_{X}$ and $\chi\dashv\vdash\bigsor_{Y\in\mathcal{Y}}\Psi_{Y}$,
for some  finite sets $\mathcal{X}$ and $\mathcal{Y}$ of $\mathsf{N}$-teams. The case $\phi=\psi\vee\chi$ is clear.
If $\phi=\psi\ror\chi$, and $\mathcal{X}=\emptyset$ or $\mathcal{Y}=\emptyset$, i.e., $\psi\dashv\vdash \bot$ or $\chi\dashv\vdash \bot$, then we derive $\psi\ror\chi\dashv\vdash\bot=\bigsor\emptyset$ by $\ror\textsf{Sub}$, $\ror\bot$\textsf{E} and \Cref{der_rule_prop}(\ref{der_rule_exfalso}). 
If $\mathcal{X},\mathcal{Y}\neq\emptyset$, we show that $\psi\ror\chi\dashv\vdash \bigsor_{X\in\mathcal{X},Y\in\mathcal{Y}}\Psi_{X\cup Y}$.
For the left to right direction, we have that 
\begin{align*}
\psi\ror\chi&\vdash\Big(\bigsor_{X\in\mathcal{X}}\Psi_{X}\Big)\ror\Big(\bigsor_{Y\in\mathcal{Y}}\Psi_{Y}\Big)\tag{by induction hypothesis and $\ror\textsf{Mon}$}\\
&\vdash\bigsor_{X\in\mathcal{X}}\Big(\Psi_{X}\ror\Big(\bigsor_{Y\in\mathcal{Y}}\Psi_{Y}\Big)\Big)\tag{\dstr$\ror\sor$}\\
&\vdash\bigsor_{X\in\mathcal{X}}\bigsor_{Y\in\mathcal{Y}}\left(\Psi_{X}\ror\Psi_{Y}\right)\tag{\dstr$\ror\sor$}\\
&\vdash\displaystyle\bigsor_{X\in\mathcal{X},Y\in\mathcal{Y}}\Psi_{X\cup Y}\tag{apply Prop. \ref{der_rule_prop}(\ref{der_rule_dstr_roridempotent}) and \sore for the case $X=Y$}.
\end{align*}
The other direction  is proved similarly using \rori and \Cref{der_rule_prop}(\ref{der_rule_dstr_rorsor}).

\vspace{0.5\baselineskip}

If $\phi=\psi\wedge\chi$, and $\mathcal{X}=\emptyset$ or $\mathcal{Y}=\emptyset$, i.e., $\psi\dashv\vdash \bot$ or $\chi\dashv\vdash \bot$, then we derive $\psi\wedge\chi\dashv\vdash\bot=\bigsor\emptyset$ by \ce and \Cref{der_rule_prop}(\ref{der_rule_exfalso}).  If $\mathcal{X},\mathcal{Y}\neq\emptyset$, we show that $\psi\wedge\chi\dashv\vdash\bigsor_{Z\in \mathcal{Z}}\Psi_Z$, where
\[\mathcal{Z}=\{\bigcup \mathcal{X}'\mid  \mathcal{X}'\subseteq \mathcal{X}\text{ and }\bigcup \mathcal{X}'=\bigcup \mathcal{Y}'\text{ for some }\mathcal{Y}'\subseteq \mathcal{Y}\}.\]
For the right to left direction, by \sore it suffices to derive $\Psi_Z\vdash\psi\wedge\chi$ for each $Z=\bigcup \mathcal{X}'=\bigcup \mathcal{Y}'\in \mathcal{Z}$, where $\mathcal{X}'\subseteq \mathcal{X}$ and $\mathcal{Y}'\subseteq \mathcal{Y}$.  By \Cref{lem_rorsor_trans}, we have that $\Psi_Z\vdash\bigvee_{X\in \mathcal{X}'}\Psi_X$. Further, by \sori and the induction hypothesis we derive $\bigvee_{X\in \mathcal{X}'}\Psi_X\vdash\bigvee_{X\in \mathcal{X}}\Psi_X\vdash\psi$. Hence, $\Psi_Z\vdash\psi$. The fact $\Psi_Z\vdash\chi$ is proved similarly.

For the left to right direction, by induction hypothesis and \Cref{der_rule_sormulti} it suffices to prove that for each nonempty $\mathcal{X}'\subseteq \mathcal{X}$ and  $\mathcal{Y}'\subseteq \mathcal{Y}$,
\[\bigror_{X\in \mathcal{X}'}\Psi_X,\bigror_{Y\in \mathcal{Y}'}\Psi_Y\vdash\bigsor_{Z\in\mathcal{Z}}\Psi_Z.\]
Note that elements in $\mathcal{X}'$ and in $\mathcal{Y}'$ may not be disjoint. So by \Cref{der_rule_prop}(\ref{der_rule_dstr_roridempotent}) we further reduce showing the above clause to showing $\Psi_{\bigcup \mathcal{X}'},\Psi_{\bigcup \mathcal{Y}'}\vdash\bigsor_{Z\in\mathcal{Z}}\Psi_Z$.
But now, if $\bigcup \mathcal{X}'\neq\bigcup \mathcal{Y}'$,  the desired clause follows simply from \Cref{der_rule_psiXYe}. Otherwise, if $\bigcup \mathcal{X}'=\bigcup \mathcal{Y}'\in \mathcal{Z}$, then we have $\Psi_{\bigcup \mathcal{X}'}\vdash\bigsor_{Z\in\mathcal{Z}}\Psi_Z$ by \sori. 
 \end{proof}

\subsection{\PIncz}

In this subsection, we axiomatize the sublogic \PIncz of \PInc, by introducing a sound and complete system of natural deduction. Recall that  \PIncz contains inclusion atoms of  primitive form $\mathsf{x}\subseteq\mathsf{a}$ with $x_i\in \{\top,\bot\}$ only. The system of \PInc will be introduced in the next subsection as an extension of the one for \PIncz. The proof of the completeness theorem for the system of \PIncz applies essentially the same argument (via normal form) as that in the previous subsection for \PU. 
Since the normal form of \PIncz (given in \Cref{nf-pinc-picnst}(\ref{nf-pinc-picnst-1})) is more complex, the proofs in this subsection will involve more steps. Let us start, again, by presenting the deduction system.

%


\begin{table}[t]
\begin{center}
\caption{Rules for (primitive) inclusion atoms}
\vspace{-4pt}
\setlength{\tabcolsep}{2pt}
\renewcommand{\arraystretch}{1.2}
\setlength{\extrarowheight}{0.5pt}
\scalebox{.92}{\begin{tabular}{|C{0.33\linewidth}C{0.33\linewidth}C{0.33\linewidth}|}
\hline
~\AxiomC{}\noLine\UnaryInfC{{\small$D$}}\noLine\UnaryInfC{$\mathsf{a}\mathsf{b}\mathsf{c}\subseteq \mathsf{a'}\mathsf{b'}\mathsf{c'}$}\RightLabel{\incexc}\UnaryInfC{$\mathsf{b}\mathsf{a}\mathsf{c}\subseteq \mathsf{b'}\mathsf{a'}\mathsf{c'}$}\noLine\UnaryInfC{}\DisplayProof
&\AxiomC{}\noLine\UnaryInfC{{\small$D$}}\noLine\UnaryInfC{$\mathsf{a}\mathsf{b}\subseteq \mathsf{c}\mathsf{d}$}\RightLabel{\incctr}\UnaryInfC{$\mathsf{a}\subseteq \mathsf{c}$}\noLine\UnaryInfC{}\DisplayProof
&\AxiomC{}\noLine\UnaryInfC{{\small$D$}}\noLine\UnaryInfC{$\mathsf{ab}\subseteq\mathsf{cd}$}\RightLabel{$\subseteq\textsf{Wk}$}\UnaryInfC{$\mathsf{aab}\subseteq\mathsf{ccd}$}\noLine\UnaryInfC{}\DisplayProof
\\
~\AxiomC{{\small$D_0$}}\noLine\UnaryInfC{$\mathsf{a}\subseteq\mathsf{b}$}\AxiomC{{\small$D_1$}}\noLine\UnaryInfC{$\mathsf{b}\subseteq\mathsf{c}$}\RightLabel{\inctrs}\BinaryInfC{$\mathsf{a}\subseteq\mathsf{c}$}\noLine\UnaryInfC{} \DisplayProof
&\AxiomC{}\noLine\UnaryInfC{}\RightLabel{\incid} \UnaryInfC{$\mathsf{a}\subseteq\mathsf{a}$}\noLine\UnaryInfC{}   \DisplayProof 
&\AxiomC{{\small$D_0$}}\noLine\UnaryInfC{$\mathsf{a}\subseteq\mathsf{b}$}\AxiomC{{\small$D_1$}}\noLine\UnaryInfC{$\alpha(\mathsf{b})$}\RightLabel{\inccmp}\BinaryInfC{$\alpha(\mathsf{a})$}\noLine\UnaryInfC{} \DisplayProof
\\
\multicolumn{3}{|c|}{\!\!\AxiomC{{\small$D$}}\noLine\UnaryInfC{$\mathsf{a}\subseteq\mathsf{b}$}\RightLabel{\inczext\!\!} \UnaryInfC{$\mathsf{\top a}\subseteq\mathsf{\top b}$}\noLine\UnaryInfC{}   \DisplayProof
\!\!\!\!\AxiomC{{\small$D$}}\noLine\UnaryInfC{$\mathsf{a}\subseteq\mathsf{b}$}\RightLabel{\inczext\!\!} \UnaryInfC{$\mathsf{\bot a}\subseteq\mathsf{\bot b}$}\noLine\UnaryInfC{}   \DisplayProof
\!\!\!\!\!\!\AxiomC{{\small$D_0$}}\noLine\UnaryInfC{$p$} \AxiomC{{\small$D_1$}}\noLine\UnaryInfC{$\mathsf{a}\subseteq\mathsf{b}$}  \RightLabel{\inczext\!\!}\BinaryInfC{$\top\mathsf{a}\subseteq p\mathsf{b}$}\noLine\UnaryInfC{} \DisplayProof
\!\!\!\!\AxiomC{{\small$D_0$}}\noLine\UnaryInfC{$\neg p$} \AxiomC{{\small$D_1$}}\noLine\UnaryInfC{$\mathsf{a}\subseteq\mathsf{b}$}  \RightLabel{\inczext\!\!}\BinaryInfC{$\bot\mathsf{a}\subseteq p\mathsf{b}$}\noLine\UnaryInfC{} \DisplayProof}\\
\multicolumn{3}{|c|}{\!\!\!\!\!\!\!\!\!\!\!\!\AxiomC{{\small$D$}}\noLine\UnaryInfC{$(\phi\wedge \mathsf{x}\subseteq\mathsf{a})\vee\psi$} \AxiomC{}\noLine\UnaryInfC{~~~[$\phi$]~~~~~~~~~~~~~[$\mathsf{x}\subseteq\mathsf{a}$]}\noLine
\UnaryInfC{{\small$D_0$}}
\branchDeduce
\DeduceC{$\chi$} 
\AxiomC{}\noLine\UnaryInfC{\raisebox{4.5pt}{[$\psi$]}}\noLine\UnaryInfC{}\noLine\UnaryInfC{{\small$D_1$}}\noLine\UnaryInfC{}\noLine\UnaryInfC{}\noLine\UnaryInfC{}\noLine\UnaryInfC{$\chi$} 
\AxiomC{}\noLine\UnaryInfC{[$\phi\vee\psi$]~~~~~~~ [$\mathsf{x}\subseteq\mathsf{a}$]} 
\noLine
\UnaryInfC{{\small$D_2$}}
\branchDeduce
\DeduceC{$\chi$} 
\RightLabel{$\vee_{\subseteq_0}\textsf{E}$}\QuaternaryInfC{$\chi$}\noLine\UnaryInfC{}\DisplayProof~~~~~~~}\\
\multicolumn{3}{|c|}{\AxiomC{{\small$D_0$}}\noLine\UnaryInfC{$\phi\vee\psi$}
\AxiomC{{\small$D_1$} ~~ $\dots$ ~~{\small$D_k$}}\noLine\UnaryInfC{$\mathsf{x}_1\subseteq\mathsf{a}_1$~ $\dots$ ~$\mathsf{x}_k\subseteq\mathsf{a}_k$}
\RightLabel{$\subseteq_0\!\textsf{Dst}$}\BinaryInfC{$\big((\phi\vee \mathsf{a}_1^{\mathsf{x}_1}\vee\dots\vee \mathsf{a}_k^{\mathsf{x}_k})\wedge \mathsf{x}_1\subseteq\mathsf{a}_1\wedge\dots\wedge \mathsf{x}_k\subseteq\mathsf{a}_k\big)\vee\psi$}\noLine\UnaryInfC{} \DisplayProof }\\
\hline
\end{tabular}}
\label{rules_PIncz}
\vspace{-10pt}
\end{center}
\end{table}

\begin{definition}
The system of \PIncz consists of the rules for constants and connectives in \Cref{tab_rule_general} and the rules for inclusion atoms in \Cref{rules_PIncz}, where  $\alpha$ ranges over classical formulas only, $\mathsf{a},\mathsf{b},\mathsf{c},\dots$ (with or without subscripts) are arbitrary (and possibly empty) sequences of elements in $\textsf{Prop}\cup\{\top,\bot\}$,  $\mathsf{x}$ (with or without subscripts) stands for an arbitrary sequence of constants $\top$ and $\bot$, and the notation $\alpha(\mathsf{a})$ indicates that the propositional variables and constants occurring in $\alpha$ are among $\mathsf{a}$. 
\end{definition}


All the rules except for the last two in \Cref{rules_PIncz} are actually sound also for arbitrary inclusion atoms (that are not necessarily primitive).
It was proved in \cite{inclusion_dep_CFP_82} that the implication problem of inclusion dependencies 
is completely axiomatized by the rules \incid and \inctrs together with the following projection rule:
\[\AxiomC{}\noLine\UnaryInfC{$a_1\dots a_k\subseteq b_1\dots b_k$}  \RightLabel{$\subseteq$\textsf{Proj}}\UnaryInfC{$a_{i_1}\dots a_{i_m}\subseteq b_{i_1}\dots b_{i_m}$}\noLine\UnaryInfC{} \DisplayProof  ~~(i_1,\dots, i_m\in\{1,\dots,k\}).\]
This rule $\subseteq$\textsf{Proj} is easily shown to be equivalent to  the three rules \incexc, \incctr, and $\subseteq\textsf{Wk}$  together in our system.


The inclusion atom compression rule \inccmp is a natural generalization of a similar rule introduced in \cite{Hannula_fo_ind_13} for first-order inclusion atoms. The primitive inclusion atom extension rule \inczext in four different forms are evidently sound. Note however that a stronger form of the extension rule $\mathsf{a}\subseteq\mathsf{b}/p\mathsf{a}\subseteq p\mathsf{b}$ with $p$ being a propositional variable in the context of the full logic \PInc is easily seen to be {\em not} sound.\todo{rephrase}

The rule $\vee_{\subseteq_0}\textsf{E}$ simulates  the entailment 
\begin{equation}\label{vincE_sound}
(\phi\wedge\mathsf{x}\subseteq\mathsf{a})\vee\psi\models (\phi\wedge\mathsf{x}\subseteq\mathsf{a})\vvee \psi\vvee ((\phi\vee\psi)\wedge \mathsf{x}\subseteq\mathsf{a}),
\end{equation}
which highlights the fact that in a team $X$ satisfying the formula $(\phi\wedge\mathsf{x}\subseteq\mathsf{a})\vee\psi$, if the  left disjunct of the formula  is satisfied by a nonempty subteam of $X$, then the primitive inclusion atom $\mathsf{x}\subseteq\mathsf{a}$ (being upward closed) is true actually in the whole team $X$.
Note that the converse direction of the entailment (\ref{vincE_sound}) does not hold, because the third disjunct $(\phi\vee\psi)\wedge \mathsf{x}\subseteq\mathsf{a}$ of the formula on the right-hand-side does not necessarily imply $(\phi\wedge\mathsf{x}\subseteq\mathsf{a})\vee\psi$. Instead, the  formula $(\phi\vee\psi)\wedge \mathsf{x}\subseteq\mathsf{a}$  implies $((\phi\vee\mathsf{a}^{\mathsf{x}})\wedge\mathsf{x}\subseteq\mathsf{a})\vee\psi$, as the rule $\subseteq_0\!\textsf{Dst}$ states.



\begin{theorem}[Soundness]
For any set $\Gamma\cup\{\phi\}$ of \PIncz-formulas,
we have that $\Gamma\vdash\phi\Longrightarrow\Gamma\models\phi$.
\end{theorem}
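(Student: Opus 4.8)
The plan is to argue by induction on the height of the derivation, showing that each inference rule preserves semantic consequence; soundness of the whole system then follows in the usual way. For a rule whose premises conclude $\theta_1,\dots,\theta_m$ (possibly under locally discharged assumptions) and whose conclusion is $\theta$, I would verify that whenever a team $X$ satisfies the undischarged hypotheses together with the entailments supplied by the premises, then $X\models\theta$. The rules of \Cref{tab_rule_general} are shared with the system for \PUw, and their soundness — including the side conditions restricting the relevant undischarged assumptions to classical formulas — was already established there; that argument uses only the empty team property, union closure and locality, all of which \PIncz enjoys, so it transfers verbatim. Thus the real work is to check the rules of \Cref{rules_PIncz}.

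For the structural inclusion rules \incexc, \incctr, $\subseteq\textsf{Wk}$, \inctrs and \incid I would read off the semantic clause for (primitive) inclusion atoms directly: these are the classical inclusion-dependency manipulations of the witnessing valuations (permuting, projecting, duplicating coordinates, chaining two witnesses, and the trivial reflexive witness), and each is immediate. The four forms of \inczext are equally direct once one notes that prepending a constant coordinate, or a coordinate pinned to a truth value $p$ (resp.\ $\neg p$) that already holds throughout $X$, does not disturb the existing witness. The compression rule \inccmp is the one place where flatness of classical formulas enters: if $X\models\mathsf{a}\subseteq\mathsf{b}$ and $X\models\alpha(\mathsf{b})$, then for each $v\in X$ there is $u\in X$ with $v(\mathsf{a})=u(\mathsf{b})$; flatness gives $u\models\alpha(\mathsf{b})$, and since $\alpha(\mathsf{a})$ is the same classical template evaluated at $\mathsf{a}$ where $\alpha(\mathsf{b})$ is evaluated at $\mathsf{b}$, the equality $v(\mathsf{a})=u(\mathsf{b})$ yields $v\models\alpha(\mathsf{a})$, whence $X\models\alpha(\mathsf{a})$ by flatness again.

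The two genuinely team-semantic rules, $\vee_{\subseteq_0}\textsf{E}$ and $\subseteq_0\!\textsf{Dst}$, are where the upward closure of primitive inclusion atoms is used, and I would isolate this as a pair of semantic observations. For $\subseteq_0\!\textsf{Dst}$ the point is to push a witness into a chosen disjunct: if $X\models\phi\vee\psi$ via $X=Y\cup Z$ with $Y\models\phi$, $Z\models\psi$, and each $\mathsf{x}_i\subseteq\mathsf{a}_i$ holds on $X$ with witness $w_i\in X$, then setting $Y'=Y\cup\{w_1,\dots,w_k\}$ and $Z'=Z$ gives $X=Y'\cup Z'$ with $Z'\models\psi$ and $Y'\models(\phi\vee\mathsf{a}_1^{\mathsf{x}_1}\vee\dots\vee\mathsf{a}_k^{\mathsf{x}_k})\wedge\bigwedge_i\mathsf{x}_i\subseteq\mathsf{a}_i$, since each $w_i$ both satisfies $\mathsf{a}_i^{\mathsf{x}_i}$ and keeps $\mathsf{x}_i\subseteq\mathsf{a}_i$ true on $Y'$. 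Dually, for $\vee_{\subseteq_0}\textsf{E}$ I would verify the entailment (\ref{vincE_sound}) by pulling the inclusion atom out to the whole team: if $X\models(\phi\wedge\mathsf{x}\subseteq\mathsf{a})\vee\psi$ via $X=Y\cup Z$, then either $Y=\emptyset$ and $X=Z\models\psi$, or $Y\neq\emptyset$, in which case $Y\models\mathsf{x}\subseteq\mathsf{a}$ and upward closure promotes this to $X\models\mathsf{x}\subseteq\mathsf{a}$ while $X\models\phi\vee\psi$; thus $X$ falls into one of the three global disjuncts of (\ref{vincE_sound}).

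The main obstacle will be $\vee_{\subseteq_0}\textsf{E}$ itself, a four-premise rule carrying discharged assumptions — including the inclusion atom $\mathsf{x}\subseteq\mathsf{a}$, discharged in two branches — together with the classical-assumption side condition. To close the induction I would feed the three-way case split furnished by (\ref{vincE_sound}) into the three minor premises: the disjunct $\phi\wedge\mathsf{x}\subseteq\mathsf{a}$ activates the premise discharging $[\phi]$ and $[\mathsf{x}\subseteq\mathsf{a}]$, the disjunct $\psi$ activates the premise discharging $[\psi]$, and the disjunct $(\phi\vee\psi)\wedge\mathsf{x}\subseteq\mathsf{a}$ activates the premise discharging $[\phi\vee\psi]$ and $[\mathsf{x}\subseteq\mathsf{a}]$; in each case the induction hypothesis for that premise, applied to $X$, yields $X\models\chi$. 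Care is needed to confirm that the classical undischarged assumptions of the minor derivations are carried along correctly, so that the conditional entailments provided by the induction hypothesis are precisely the ones the case split consumes.
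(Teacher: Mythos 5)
Your proposal is correct and takes essentially the same route as the paper: the paper likewise derives the soundness of $\vee_{\subseteq_0}\textsf{E}$ from the entailment (\ref{vincE_sound}) (justified exactly by the upward closure of primitive inclusion atoms), proves $\subseteq_0\!\textsf{Dst}$ by adjoining the witnessing valuations to the $\phi$-side of the split, and treats the structural inclusion rules as routine. The only notable difference is in \inccmp, where your appeal to the substitution property of classical formulas is spelled out in the paper by replacing the constants occurring in $\mathsf{a},\mathsf{b}$ with fresh propositional variables and invoking locality together with Equivalence (\ref{cpl_singleton_single}); also note that the paper's rule $\vee_{\subseteq_0}\textsf{E}$ carries no classical-assumption side condition, which is harmless since your global-disjunction case split never needs one.
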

\begin{proof}
The soundness of the rule $\vee_{\subseteq_0}\textsf{E}$ follows from the entailment (\ref{vincE_sound}), which can be easily checked. We now verify the soundness of the rules $\subseteq\!\textsf{Cmp}$ and $\subseteq_0$\textsf{Dst}.  The other rules are easily seen to be sound (for primitive and also arbitrary inclusion atoms).

For $\subseteq\!\textsf{Cmp}$, we will verify its soundness for arbitrary inclusion atoms (that are not necessarily primitive). Suppose $X\models\mathsf{a}\subseteq\mathsf{b}$ and $X\models\alpha(\mathsf{b})$. By \Cref{cpl_singleton_single}, to show that $X\models\alpha(\mathsf{a})$ it suffices to show $v\models\alpha(\mathsf{a})$ for any $v\in X$. By the assumption, there exists $u\in X$ such that $u(\mathsf{b})=v(\mathsf{a})$. Since $X\models\alpha(\mathsf{b})$ and  $\alpha(\mathsf{b})$ is flat, we have that $u\models\alpha(\mathsf{b})$. Now, if all elements in the sequences $\mathsf{a},\mathsf{b}$ are propositional variables, then $v\models\alpha(\mathsf{a})$  follows from locality. In case some elements in $\mathsf{a},\mathsf{b}$ are constants $\top$ or $\bot$, replace these elements with fresh propositional variables to obtain two new sequences $\mathsf{a}',\mathsf{b}'$ of propositional variables. Let $v',u'$ be valuations for $\mathsf{a}',\mathsf{b}'$ that agree with $v,u$ respectively on all propositional variables from $\mathsf{a},\mathsf{b}$, and map the fresh propositional variable corresponding to $\top$ to $1$ and the fresh propositional variable corresponding to $\bot$ to $0$. Clearly, $u'(\mathsf{b}')=u(\mathsf{b})=v(\mathsf{a})=v'(\mathsf{a}')$. Thus, by properties of classical propositional logic, we have that
\[v\models\alpha(\mathsf{a})\text{ iff }v'\models\alpha(\mathsf{a}'),\text{ and  }u\models\alpha(\mathsf{b})\text{ iff }u'\models\alpha(\mathsf{b}').\]
Now,  the fact that  $u\models\alpha(\mathsf{b})$ then implies  $u'\models\alpha(\mathsf{b}')$, which further implies, by locality, that $v'\models\alpha(\mathsf{a}')$. Thus, $v\models\alpha(\mathsf{a})$ follows.

For $\subseteq_0$\textsf{Dst}, suppose that $X\models\phi\vee\psi$ and $X\models\mathsf{x}_1\subseteq\mathsf{a}_1\wedge\dots\wedge\mathsf{x}_k\subseteq\mathsf{a}_k$. The latter implies that there are $v_1,\dots,v_k\in X$ such that $v_1(\mathsf{a}_1)=v_1(\mathsf{x}_1),\dots,v_k(\mathsf{a}_k)=v_k(\mathsf{x}_k)$. Thus, $\{v_1\}\models\mathsf{a}_1^{\mathsf{x}_1},\dots,\{v_k\}\models\mathsf{a}_k^{\mathsf{x}_k}$. On the other hand, there are $Y,Z\subseteq X$ such that $X=Y\cup Z$, $Y\models\phi$ and $Z\models\psi$. Clearly, $Y\cup\{v_1,\dots,v_k\}\models \phi\vee\mathsf{a}_1^{\mathsf{x}_1}\vee\dots\vee\mathsf{a}_k^{\mathsf{x}_k}$ and $Y\cup\{v_1,\dots,v_k\}\models\mathsf{x}_1\subseteq\mathsf{a}_1\wedge\dots\wedge\mathsf{x}_k\subseteq\mathsf{a}_k$. Hence, we conclude that $Y\cup\{v_1,\dots,v_k\}\cup Z=X\models\big((\phi\vee\mathsf{a}_1^{\mathsf{x}_1}\vee\dots\vee\mathsf{a}_k^{\mathsf{x}_k})\wedge \mathsf{x}_1\subseteq\mathsf{a}_1\wedge\dots\wedge\mathsf{x}_k\subseteq\mathsf{a}_k\big)\vee\psi$.
\end{proof}

The proof of the completeness theorem uses a similar normal form argument to that in the previous subsection. The general structure and key ingredients of the proof are the same as in the completeness proof for the system of \PU. The crucial step is to show the following lemma that every formula in \PIncz is provably equivalent to a formula in the normal  form $\bigvee_{X\in\mathcal{X}}(\Theta_X\wedge \Phi_X)$ (see also \Cref{nf-pinc-picnst}(\ref{nf-pinc-picnst-1})). Since the normal form for \PIncz is more complex, the proof of this lemma involves more preparation steps. As in the previous section, we will only give the detailed proof of the lemma after we presented the completeness proof (in \Cref{PInc_completeness}).

\begin{lemma}\label{DNF_PInc}
Let $\mathsf{N}=\{p_1,\dots,p_n\}$. Every \PIncz-formula $\phi(\mathsf{N})$ is provably equivalent to a formula of the form $\displaystyle \bigsor_{X\in\mathcal{X}}(\Theta_{X}\wedge\Phi_X)$, where  $\mathcal{X}$ is a  finite set of $\mathsf{N}$-teams,
\begin{equation}\label{NF_PInc_prof_eq}
\Theta_X:=\bigsor_{v\in X}(p_{1}^{v(1)}\wedge\dots\wedge p_{n}^{v(n)}),\quad\text{and}\quad\Phi_X:=\bigwedge_{v\in X}\underline{v(1)}\dots \underline{v(n)}\subseteq p_{1}\dots p_{n},
\end{equation}
\end{lemma}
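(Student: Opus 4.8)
The plan is to prove \Cref{DNF_PInc} by induction on the complexity of the \PIncz-formula $\phi(\mathsf{N})$, mirroring the structure of the corresponding proof of \Cref{DNF_PU} in the previous subsection, but using the normal form $\bigsor_{X\in\mathcal{X}}(\Theta_X\wedge\Phi_X)$ in place of $\bigsor_{X\in\mathcal{X}}\Psi_X$. The atomic and classical-connective cases should go through largely as before: for a propositional variable $p_i$, for the constants $\bot,\top$, and for classical formulas $\alpha$ under negation, I expect to reuse the purely classical arguments (all classical rules being available in the \PIncz-system), noting that each $\Theta_{\{v\}}\wedge\Phi_{\{v\}}$ is provably equivalent to the singleton conjunction $p_1^{v(1)}\wedge\dots\wedge p_n^{v(n)}$ by \Cref{der_rule_prop_pinc}(\ref{der_rule_pinc_winci}). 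The disjunction case $\phi=\psi\sor\chi$ is immediate by the induction hypothesis together with associativity/commutativity of $\sor$.

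The genuinely new work lies in the atomic case for a primitive inclusion atom $\mathsf{x}\subseteq\mathsf{a}$, and in the conjunction case $\phi=\psi\wedge\chi$. For the atomic case, I would first invoke \Cref{regular_incl_atm_lm} to reduce $\mathsf{x}\subseteq\mathsf{a}$ either to $\top$, to $\bot$, or to a regular primitive inclusion atom $\mathsf{x}_0\subseteq\mathsf{a}_0$; the first two fall under already-handled cases, so it suffices to put a regular atom into normal form. For a regular atom I would use \Cref{der_rule_prop_pinc}(\ref{der_rule_pinc_allposs}) to split $\top\dashv\vdash\bigsor_{v\in 2^{\mathsf{N}}}(\Theta_v\wedge\Phi_v)$ and then, against this background of all singletons, use $\subseteq_0\!\textsf{Dst}$ together with \Cref{der_rule_prop_pinc}(\ref{der_rule_pinc_ince}) (contradictoriness of $\neg\mathsf{a}^{\mathsf{x}}$ with $\mathsf{x}\subseteq\mathsf{a}$) to collect exactly those teams $X$ that satisfy the atom, arriving at $\bigsor_{X\in\mathcal{X}}(\Theta_X\wedge\Phi_X)$ for the appropriate family $\mathcal{X}$.

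For the conjunction case, I would again set $\mathcal{Z}=\{\bigcup\mathcal{X}'\mid \mathcal{X}'\subseteq\mathcal{X},\ \bigcup\mathcal{X}'=\bigcup\mathcal{Y}'\text{ for some }\mathcal{Y}'\subseteq\mathcal{Y}\}$ and aim to show $\psi\wedge\chi\dashv\vdash\bigsor_{Z\in\mathcal{Z}}(\Theta_Z\wedge\Phi_Z)$. The right-to-left direction should follow from \Cref{PInc_dstr_lm} (the \PIncz-analogue of \Cref{lem_rorsor_trans}), exactly as in the \PUw proof: from $\Theta_Z\wedge\Phi_Z$ with $Z=\bigcup\mathcal{X}'$ one derives $\bigsor_{X\in\mathcal{X}'}(\Theta_X\wedge\Phi_X)$, hence $\psi$, and symmetrically $\chi$. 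The left-to-right direction is where the extra machinery is needed: here I would push the two normal forms together, and using \Cref{veesub_eli}(\ref{veesub_eli_c}) to handle the multiple primitive inclusion atoms $\Phi_X,\Phi_Y$ as side conditions, reduce to showing, for each $\mathcal{X}'\subseteq\mathcal{X}$ and $\mathcal{Y}'\subseteq\mathcal{Y}$, a clause about $\Theta_{\bigcup\mathcal{X}'}$ and $\Theta_{\bigcup\mathcal{Y}'}$ against the conjunction of the relevant inclusion atoms, splitting on whether $\bigcup\mathcal{X}'=\bigcup\mathcal{Y}'$.

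The main obstacle I anticipate is the bookkeeping in the conjunction case: unlike the $\ror$-based normal form, the $\Phi_X$ are conjunctions of several primitive inclusion atoms, so the interaction between the flat disjunction $\sor$ and these atoms cannot be managed by distributivity of $\ror$ over $\sor$ as in \Cref{DNF_PU}. Instead the whole weight falls on the generalized elimination rule \Cref{veesub_eli}(\ref{veesub_eli_c}), which must be applied carefully so that the inclusion atoms in $\Phi_X$ and $\Phi_Y$ are correctly carried as hypotheses while the disjuncts $\Theta_X,\Theta_Y$ are combined; once the reduction to the two cases $\bigcup\mathcal{X}'\neq\bigcup\mathcal{Y}'$ (handled by a contradiction argument, using that distinct teams give incompatible $\Theta$'s together with \Cref{der_rule_prop_pinc}(\ref{der_rule_pinc_ince})) and $\bigcup\mathcal{X}'=\bigcup\mathcal{Y}'\in\mathcal{Z}$ (immediate by \sori) is achieved, the argument closes.
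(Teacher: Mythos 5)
Your proposal reproduces the paper's own proof in almost every respect: the same induction, the same handling of the classical cases via \Cref{der_rule_prop_pinc}(\ref{der_rule_pinc_winci}), the reduction of the atomic case to regular atoms via \Cref{regular_incl_atm_lm}, and in the conjunction case the same set $\mathcal{Z}$, with \Cref{PInc_dstr_lm} for the right-to-left direction and \Cref{veesub_eli}(\ref{veesub_eli_c}) plus the incompatibility argument through \Cref{der_rule_prop_pinc}(\ref{der_rule_pinc_ince}) for the left-to-right direction. All of that is correct and is exactly what the paper does.

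There is, however, one genuine gap: in the primitive inclusion atom case you assign the crucial step to the rule $\subseteq_0\!\textsf{Dst}$, and that rule cannot do the job. After invoking \Cref{der_rule_prop_pinc}(\ref{der_rule_pinc_allposs}) your hypotheses are $\bigsor_{v\in 2^{\mathsf{N}}}(\Theta_{\{v\}}\wedge\Phi_{\{v\}})$ together with the atom $\mathsf{x}\subseteq\mathsf{a}$, and what you need is to \emph{case-split} on the background disjunction; plain \sore is blocked because its side condition requires all undischarged assumptions in the minor derivations to be classical, while here the atom and the $\Phi_{\{v\}}$'s are not. The rule $\subseteq_0\!\textsf{Dst}$ moves information in the wrong direction for this: it pushes top-level atoms \emph{into} a disjunct while weakening that disjunct's classical part by $\mathsf{a}^{\mathsf{x}}$. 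Concretely, if you apply it with the witnessing singletons as the left disjunct, the output simplifies (classically, using that $\mathsf{a}^{\mathsf{x}}$ is provably equivalent to the disjunction of the witnessing $\Theta_{\{v\}}$'s) to $(\mathsf{a}^{\mathsf{x}}\wedge \mathsf{x}\subseteq\mathsf{a})\vee\bigsor_{v\not\models \mathsf{a}^{\mathsf{x}}}(\Theta_{\{v\}}\wedge\Phi_{\{v\}})$, which is satisfied by \emph{every} team (split any team into its witnesses and non-witnesses), so the content of the atom has evaporated; with the opposite split one merely re-derives a weakening of the hypotheses. What is needed is an \emph{elimination} principle that brings inclusion atoms buried inside disjuncts up to the top level, and that is precisely \Cref{veesub_eli}(\ref{veesub_eli_c}) (the generalization of $\vee_{\subseteq_0}\textsf{E}$) --- the very lemma you deploy correctly in the conjunction case. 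With it, the claim reduces to $\Theta_Y,\Phi_Y,\mathsf{x}\subseteq\mathsf{a}\vdash\bigsor_{X\in\mathcal{X}}(\Theta_X\wedge\Phi_X)$ for each nonempty $Y\subseteq 2^{\mathsf{N}}$, and then your intended case distinction works: if $Y\in\mathcal{X}$ conclude by \sori, otherwise $\Theta_Y\vdash\neg\mathsf{a}^{\mathsf{x}}$ classically and \Cref{der_rule_prop_pinc}(\ref{der_rule_pinc_ince}) gives a contradiction. Once this substitution is made (and the easy converse direction is added, extracting the atom from a conjunct of $\Phi_X$ by \incctr, which your sketch omits), your proof coincides with the paper's.
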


Recall from the proof of \Cref{PT_exp_pw}  that each disjunct $\Theta_X\wedge \Phi_X$ of the normal form $\bigsor_{X\in\mathcal{X}}(\Theta_{X}\wedge\Phi_X)$ above is equivalent to the \PU-formula $\Psi_X$. Thus, \Cref{comp_main_lm} from the previous subsection with respect to the formulas $\Psi_X$ holds also with $\Theta_X\wedge\Phi_X$ in place of $\Psi_X$.

Another key lemma for the completeness theorem is the following fact that corresponds to a specific case of Lemma \ref{lem_rorsor_trans} in the previous subsection.

%
%
%

\begin{lemma}\label{PInc_dstr_lm}
For any finite set $\mathcal{Y}$ of $\mathsf{N}$-teams, $\Theta_{\bigcup\mathcal{Y}},\Phi_{\bigcup\mathcal{Y}}\vdash \bigvee_{Y\in \mathcal{Y}}(\Theta_Y\wedge\Phi_{Y})$.
\end{lemma}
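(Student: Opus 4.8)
The plan is to mirror the proof of \Cref{lem_rorsor_trans} for \PUw, except that \PIncz has no relevant disjunction to rearrange, so the work is instead carried by the distribution rule $\subseteq_0\textsf{Dst}$. Write $\mathsf{N}=\{p_1,\dots,p_n\}$ and, for a valuation $v$, abbreviate $\theta_v:=p_1^{v(1)}\wedge\dots\wedge p_n^{v(n)}$ and $\iota_v:=\underline{v(1)}\dots\underline{v(n)}\subseteq p_1\dots p_n$, so that $\Theta_X=\bigsor_{v\in X}\theta_v$ and $\Phi_X=\bigwedge_{v\in X}\iota_v$. The observation that makes everything fit is that, for the inclusion atom $\iota_v$, the formula $\mathsf{a}^{\mathsf{x}}$ occurring in $\subseteq_0\textsf{Dst}$ is exactly $\theta_v$, because $p_i^{\underline{v(i)}}$ and $p_i^{v(i)}$ are the same literal. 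Hence feeding the atoms $\{\iota_v : v\in Y\}$ into $\subseteq_0\textsf{Dst}$ enriches a disjunct by precisely $\bigsor_{v\in Y}\theta_v=\Theta_Y$ and annexes to it precisely the conjunction $\bigwedge_{v\in Y}\iota_v=\Phi_Y$.

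First I would record two routine facts. Because $\bigcup\mathcal{Y}=\bigcup_{Y\in\mathcal{Y}}Y$ and the $\theta_v$ are classical, the idempotence, commutativity and associativity of $\sor$ for classical formulas give $\Theta_{\bigcup\mathcal{Y}}\dashv\vdash\bigsor_{Y\in\mathcal{Y}}\Theta_Y$; and because each $Y\subseteq\bigcup\mathcal{Y}$, repeated \ce yields $\Phi_{\bigcup\mathcal{Y}}\vdash\iota_v$ for every $v\in\bigcup\mathcal{Y}$, so $\Phi_{\bigcup\mathcal{Y}}$ supplies all the atoms occurring in any $\Phi_Y$. Keeping $\Phi_{\bigcup\mathcal{Y}}$ as a standing assumption, I then process the members $Y_1,\dots,Y_m$ of $\mathcal{Y}$ one at a time while maintaining a single flat disjunction. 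Having already converted $Y_1,\dots,Y_{k-1}$, the current formula is a $\sor$-disjunction with terms $\Theta_{Y_i}\wedge\Phi_{Y_i}$ for $i<k$ and $\Theta_{Y_i}$ for $i\ge k$; using commutativity and associativity of $\sor$ I bring $\Theta_{Y_k}$ to the front, apply $\subseteq_0\textsf{Dst}$ with the atoms $\{\iota_v : v\in Y_k\}$ drawn from $\Phi_{\bigcup\mathcal{Y}}$, and then collapse the enriched left disjunct $\big(\Theta_{Y_k}\sor\bigsor_{v\in Y_k}\theta_v\big)\wedge\Phi_{Y_k}$ to $\Theta_{Y_k}\wedge\Phi_{Y_k}$ using \ce, the classical idempotence $\Theta_{Y_k}\sor\Theta_{Y_k}\dashv\vdash\Theta_{Y_k}$, and \ci. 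After $m$ such steps the formula is $\bigsor_{Y\in\mathcal{Y}}(\Theta_Y\wedge\Phi_Y)$, as wanted.

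The subtle point --- and the reason $\subseteq_0\textsf{Dst}$ is indispensable here --- is the behaviour of the flat disjunction. The naive induction on $|\mathcal{Y}|$, which would peel off one $\Theta_{Y^\ast}\wedge\Phi_{Y^\ast}$ and apply the induction hypothesis to the remaining disjunct, fails: that hypothesis carries the non-classical side assumption $\Phi_{\bigcup\mathcal{Y}'}$, and pushing a derivation with such an assumption underneath a $\sor$ is blocked by the classical-assumptions side condition of \sore. The rule $\subseteq_0\textsf{Dst}$ circumvents this precisely because it has no side condition and consumes the inclusion atoms at the top level, from the global assumption $\Phi_{\bigcup\mathcal{Y}}$, rather than as hypotheses discharged under a disjunct. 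I must therefore check the bookkeeping for the only ``reasoning under a disjunct'' that I do use, namely the $\sor$-rearrangements and the final classical collapse: each is an instance of \sore in which the disjunct one starts from is itself discharged and no further non-classical hypothesis is introduced, so the classical side condition is met and the global assumption $\Phi_{\bigcup\mathcal{Y}}$ is never needed inside those auxiliary subderivations. Getting this discharge bookkeeping right is the main thing to be careful about; the algebra of $\sor$ and the semantic identity behind each step are routine.
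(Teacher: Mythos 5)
Your proof is correct and takes essentially the same route as the paper's: both iterate over the members of $\mathcal{Y}$ one at a time, at each stage applying $\subseteq_0\!\textsf{Dst}$ with the atoms of $\Phi_{Y_k}$ and exploiting that $\mathsf{a}^{\mathsf{x}}$ for the atom $\iota_v$ is exactly the classical description $\theta_v$, so the enriched left disjunct collapses to $\Theta_{Y_k}\wedge\Phi_{Y_k}$ by classical $\sor$-reasoning under \sore. The only cosmetic difference is that you keep $\Phi_{\bigcup\mathcal{Y}}$ as a single standing assumption and extract each $\iota_v$ by \ce, whereas the paper lists the $\Phi_{Y_i}$ as separate premises and leaves the step $\Phi_{\bigcup\mathcal{Y}}\vdash\Phi_{Y_i}$ implicit; your check of the $\vee\textsf{E}$ side condition matches what the paper's derivation tacitly relies on.
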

\begin{proof}
Let $\mathcal{Y}=\{Y_1,\dots,Y_k\}$. We first derive that 
\begin{align*}
\Theta_{Y_1\cup\dots\cup Y_k},\Phi_{Y_1}&\vdash\big(\Theta_{Y_1}\vee(\Theta_{Y_2}\vee\dots\vee\Theta_{Y_k})\big)\wedge\Phi_{Y_1}\tag{\sori}\\
&\vdash \big((\Theta_{Y_1}\vee\Theta_{Y_1})\wedge \Phi_{Y_1}\big)\vee(\Theta_{Y_2}\vee\dots\vee\Theta_{Y_k})\tag{$\subseteq_0\!\textsf{Dst}$}\\
&\vdash  (\Theta_{Y_1}\wedge \Phi_{Y_1})\vee(\Theta_{Y_2}\vee\dots\vee\Theta_{Y_k})\tag{\sore}
\end{align*}
Similarly, we have that
\[(\Theta_{Y_1}\wedge \Phi_{Y_1})\vee(\Theta_{Y_2}\vee\dots\vee\Theta_{Y_k}),\Phi_{Y_2}\vdash (\Theta_{Y_2}\wedge \Phi_{Y_2})\vee((\Theta_{Y_1}\wedge\Phi_{Y_1})\vee\Theta_{Y_3}\vee\dots\vee\Theta_{Y_k})\]
and so on. In the end, putting all these steps together, we obtain that 
\[\Theta_{Y_1\cup\dots\cup Y_k},\Phi_{Y_1},\dots,\Phi_{Y_k}\vdash (\Theta_{Y_1}\wedge\Phi_{Y_1})\vee\dots\vee(\Theta_{Y_k}\wedge\Phi_{Y_k}).\]
\end{proof}

\begin{theorem}[Completeness]\label{PInc_completeness}
For any set $\Gamma\cup\{\phi\}$ of \PIncz-formulas, we have that $\Gamma\models\phi \iff \Gamma\vdash\phi$.  
\end{theorem}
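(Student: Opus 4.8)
The plan is to mirror closely the completeness proof for \PUw (\Cref{PU_completeness}), substituting the inclusion-logic normal form $\bigsor_{X\in\mathcal{X}}(\Theta_X\wedge\Phi_X)$ for the \PUw normal form $\bigsor_{X\in\mathcal{X}}\Psi_X$ at every step. First I would invoke compactness: since every \PIncz-formula is a \PInc-formula, \Cref{compactness_thm} lets me assume $\Gamma$ is finite and set $\psi=\bigwedge\Gamma$, so that it suffices to prove $\psi\models\phi\Rightarrow\psi\vdash\phi$, the converse being soundness. Given $\psi\models\phi$, I would apply the normal form lemma \Cref{DNF_PInc} to obtain finite sets $\mathcal{X},\mathcal{Y}$ of $\mathsf{N}$-teams with $\psi\dashv\vdash\bigsor_{X\in\mathcal{X}}(\Theta_X\wedge\Phi_X)$ and $\phi\dashv\vdash\bigsor_{Y\in\mathcal{Y}}(\Theta_Y\wedge\Phi_Y)$, and then use soundness to push the semantic entailment down to the two normal forms, i.e.\ $\bigsor_{X\in\mathcal{X}}(\Theta_X\wedge\Phi_X)\models\bigsor_{Y\in\mathcal{Y}}(\Theta_Y\wedge\Phi_Y)$.

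Next I would dispose of the degenerate cases exactly as in the \PUw argument: if $\mathcal{X}=\emptyset$ then $\psi\dashv\vdash\bot$ and $\psi\vdash\phi$ follows by \emph{ex falso} (\Cref{der_rule_prop}(\ref{der_rule_exfalso})); and if $\mathcal{Y}=\emptyset$ then $\phi\dashv\vdash\bot$, the semantic entailment forces $\psi\dashv\vdash\bot$ as well, and again $\psi\vdash\phi$. For the principal case $\mathcal{X},\mathcal{Y}\neq\emptyset$, I would use the version of \Cref{comp_main_lm} stated for $\Theta_X\wedge\Phi_X$ in place of $\Psi_X$ (valid because, by \Cref{PT_exp_pw}, $\Theta_X\wedge\Phi_X\equiv\Psi_X$) to conclude that each $X\in\mathcal{X}$ satisfies $X=\bigcup\mathcal{Y}_X$ for some $\mathcal{Y}_X\subseteq\mathcal{Y}$. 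Since then $\Theta_X=\Theta_{\bigcup\mathcal{Y}_X}$ and $\Phi_X=\Phi_{\bigcup\mathcal{Y}_X}$, I would apply \Cref{PInc_dstr_lm} --- the inclusion-logic analogue of \Cref{lem_rorsor_trans} --- together with \ce to derive $\Theta_X\wedge\Phi_X\vdash\bigvee_{Y\in\mathcal{Y}_X}(\Theta_Y\wedge\Phi_Y)$, and then weaken by \sori to $\Theta_X\wedge\Phi_X\vdash\bigsor_{Y\in\mathcal{Y}}(\Theta_Y\wedge\Phi_Y)$. A final application of \sore over $X\in\mathcal{X}$ yields $\bigsor_{X\in\mathcal{X}}(\Theta_X\wedge\Phi_X)\vdash\bigsor_{Y\in\mathcal{Y}}(\Theta_Y\wedge\Phi_Y)$, hence $\psi\vdash\phi$.

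The main obstacle is not the assembly above but lies in the supporting lemmas, which have already been established. The genuine difficulty is that \PIncz lacks the relevant disjunction $\ror$, so the clean identity $\Psi_X=\bigror_{u\in X}\Psi_{\{u\}}$ that drives the \PUw proof is unavailable, and its role must be played by \Cref{PInc_dstr_lm}, whose derivation in turn leans on the inclusion-atom rules $\subseteq_0\!\textsf{Dst}$ and $\vee_{\subseteq_0}\textsf{E}$. Consequently the only point requiring real care in the theorem proper is checking that \Cref{comp_main_lm} and \Cref{PInc_dstr_lm} interface correctly --- that the decomposition $X=\bigcup\mathcal{Y}_X$ delivered semantically is exactly the hypothesis that the syntactic lemma \Cref{PInc_dstr_lm} consumes --- after which the derivation closes by the same bookkeeping with \sori{} and \sore{} as in \Cref{PU_completeness}.
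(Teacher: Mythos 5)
Your proposal is correct and takes essentially the same approach as the paper: the paper's own proof consists precisely of repeating the argument of \Cref{PU_completeness} with $\Psi_X$ replaced by $\Theta_X\wedge\Phi_X$, noting that \Cref{comp_main_lm} transfers to $\Theta_X\wedge\Phi_X$ (since $\Theta_X\wedge\Phi_X\equiv\Psi_X$) and that the crucial derivation $\Theta_X\wedge\Phi_X\vdash\bigvee_{Y\in\mathcal{Y}_X}(\Theta_Y\wedge\Phi_Y)$ in the case $X=\bigcup\mathcal{Y}_X$ is supplied by \Cref{PInc_dstr_lm}. Your identification of these two lemmas and of how they interface is exactly the content of the paper's proof.
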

\begin{proof}
The proof follows from  the same argument as  that for \Cref{PU_completeness} with the \PU-formula $\Psi_X$ now replaced with the \PIncz-formula $\Theta_X\wedge\Phi_X$. In the key steps, we now apply \Cref{DNF_PInc} and \Cref{comp_main_lm} for $\Theta_X\wedge\Phi_X$. Another crucial fact that $\Theta_X\wedge\Phi_X\vdash\bigvee_{Y\in \mathcal{Y}_X}(\Theta_Y\wedge\Phi_Y)$ in case $X=\bigcup\mathcal{Y}_X$ for some $\mathcal{Y}_X\subseteq \mathcal{Y}$ is  given by \Cref{PInc_dstr_lm}.
\end{proof}

The rest of this subsection is devoted to the proof of \Cref{DNF_PInc}.
We first prove the following technical lemma.  


\begin{proposition}\label{der_rule_prop_pinc}
\begin{enumerate}[(i)]
\item\label{der_rule_pinc_ince} $\neg (a_1^{x_1}\wedge\dots\wedge a_n^{x_n}), x_1\dots x_n\subseteq a_{1}\dots a_{n}\vdash\bot$.
\item\label{der_rule_pinc_winci} $p_1^{x_1},\dots,p_n^{x_n}\vdash x_1\dots x_n\subseteq p_{1}\dots p_{n}$. 
\item\label{der_rule_pinc_allposs} $\vdash \bigsor_{v\in 2^{\mathsf{N}}}(\Theta_v\wedge \Phi_v)$ for any $\mathsf{N}\subseteq\mathsf{Prop}$.
\end{enumerate}
\end{proposition}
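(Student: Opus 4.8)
Since items~(i) and~(ii) are self-contained while item~(iii) builds on~(ii), I would establish them in that order. For item~(i), the key observation is that $\mathsf{x}^{\mathsf{x}}=x_1^{x_1}\wedge\dots\wedge x_n^{x_n}$ is provably equivalent to $\top$: each $x_i\in\{\top,\bot\}$ gives $x_i^{x_i}=\top^\top$ or $\bot^\bot$, both of which are $\top$, so $\vdash\mathsf{x}^{\mathsf{x}}$ by \topi and \ci. I would then read $\neg(a_1^{x_1}\wedge\dots\wedge a_n^{x_n})=\neg\mathsf{a}^{\mathsf{x}}$ as a classical formula $\alpha(\mathsf{a})$ whose propositional variables and constants lie among $\mathsf{a}$, and apply the compression rule \inccmp to the primitive inclusion atom $\mathsf{x}\subseteq\mathsf{a}$ together with $\alpha(\mathsf{a})$. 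The rule substitutes $\mathsf{x}$ for $\mathsf{a}$ positionally and yields $\alpha(\mathsf{x})=\neg\mathsf{x}^{\mathsf{x}}$. An application of $\neg$\textsf{E} to $\mathsf{x}^{\mathsf{x}}$ and $\neg\mathsf{x}^{\mathsf{x}}$ then delivers $\bot$.

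For item~(ii), I would argue by induction on $n$, building the inclusion atom one coordinate at a time from the left with the four forms of the extension rule \inczext. The base case $n=0$ is $\langle\rangle\subseteq\langle\rangle$, an instance of \incid. For the inductive step, assuming $p_2^{x_2},\dots,p_n^{x_n}\vdash x_2\dots x_n\subseteq p_2\dots p_n$, I prepend the first coordinate according to its sign: if $x_1=\top$ then $p_1^{x_1}=p_1$, and the rule $p,\mathsf{a}\subseteq\mathsf{b}\,/\,\top\mathsf{a}\subseteq p\mathsf{b}$ (with $\mathsf{a}=x_2\dots x_n$ and $\mathsf{b}=p_2\dots p_n$) produces $x_1x_2\dots x_n\subseteq p_1p_2\dots p_n$; if $x_1=\bot$ then $p_1^{x_1}=\neg p_1$, and the symmetric rule $\neg p,\mathsf{a}\subseteq\mathsf{b}\,/\,\bot\mathsf{a}\subseteq p\mathsf{b}$ does the same with $\bot$ in the leading left position.

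For item~(iii), set $\chi=\bigsor_{v\in 2^{\mathsf{N}}}(\Theta_v\wedge\Phi_v)$, and recall from \Cref{PT_exp_pw} that for a single valuation $v$ we have $\Theta_v=p_1^{v(1)}\wedge\dots\wedge p_n^{v(n)}$ and $\Phi_v=\underline{v(1)}\dots\underline{v(n)}\subseteq p_1\dots p_n$. These are precisely the hypotheses and conclusion of item~(ii) with $x_i=\underline{v(i)}$, so by extracting the conjuncts of $\Theta_v$ with \ce, applying item~(ii) to obtain $\Phi_v$, and recombining with \ci, I get $\Theta_v\vdash\Theta_v\wedge\Phi_v$, hence $\Theta_v\vdash\chi$ by \sori. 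Next, $\vdash\bigsor_{v\in 2^{\mathsf{N}}}\Theta_v$ holds by purely classical reasoning, since $\bigsor_v\Theta_v$ is the full disjunctive normal form of $\top$ derived in the proof of \Cref{DNF_PU} and $\vdash\top$ by \topi. Finally, I discharge this disjunction by an iterated application of \sore, reducing $\bigsor_v\Theta_v\vdash\chi$ to the leaf obligations $\Theta_v\vdash\chi$ already established.

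The step I expect to require the most care is this last elimination, because of the side condition on \sore that every discharged assumption be classical. Since the target $\chi$ is non-classical, one cannot rewrite the disjuncts $\Theta_v$ into $\Theta_v\wedge\Phi_v$ one at a time: an intermediate disjunction would then occur as a branch assumption carrying a non-classical disjunct, violating the restriction. The remedy is to perform a single simultaneous iterated elimination straight from $\bigsor_v\Theta_v$ to the fixed conclusion $\chi$, so that the eliminated formula and every sub-disjunction appearing as a branch assumption is a disjunction of the classical formulas $\Theta_v$, and the side condition of \sore holds at every node. Items~(i) and~(ii) are otherwise routine once one notices the convention $x_i^{x_i}\equiv\top$ and the pairing of $p_i^{x_i}$ with the two signed forms of \inczext.
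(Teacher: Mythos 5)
Your proposal is correct and follows essentially the same route as the paper's proof: item (i) via \inccmp applied to $\neg\mathsf{a}^{\mathsf{x}}$ and then a contradiction from $\neg\mathsf{x}^{\mathsf{x}}$, item (ii) by iterating the signed forms of \inczext coordinate by coordinate (the paper seeds the iteration with $\top\subseteq\top$ and \incctr rather than with the empty atom, an immaterial difference), and item (iii) by deriving $\vdash\bigsor_{v}\Theta_v$ classically, strengthening each disjunct to $\Theta_v\wedge\Phi_v$ via item (ii), and closing with \sori and \sore. One small correction to your final remark: the side condition on \sore restricts the \emph{undischarged} assumptions in the subderivations, not the discharged disjuncts themselves, so the one-disjunct-at-a-time rewriting you were worried about would in fact also be legal; your simultaneous iterated elimination is nonetheless exactly what the paper does.
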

\begin{proof}
For item (\ref{der_rule_pinc_ince}), we derive by $\subseteq\textsf{Cmp}$ that 
\[\neg (a_1^{x_1}\wedge\dots\wedge a_n^{x_n}), x_1\dots x_n\subseteq a_{1}\dots a_{n}\vdash\neg (x_1^{x_1}\wedge\dots\wedge x_n^{x_n})\vdash\neg (\top\wedge\dots\wedge\top)\vdash\bot.\] 

For item (\ref{der_rule_pinc_winci}), we derive by applying \inczext and \incctr that 
\[p_1^{x_1},\top\subseteq \top\vdash x_1\top\subseteq p_1 \top\vdash x_1\subseteq p_1.\]
Since $\vdash \top\subseteq \top$ by  \incid , we conclude $p_1^{x_1}\vdash  x_1 \subseteq p_1$. By \inczext again, we  derive $p_2^{x_2},x_1 \subseteq p_1\vdash x_2x_1\subseteq p_1p_2$, and thus $p_1^{x_1},p_2^{x_2}\vdash x_2x_1\subseteq p_1p_2$. Proceed in the same way we obtain $p_1^{x_1},\dots,p_n^{x_n}\vdash x_1\dots x_n\subseteq p_{1}\dots p_{n}$ in the end.
%
%

For item (\ref{der_rule_pinc_allposs}), we first derive by rules of classical formulas that $\vdash \bigsor_{v\in 2^{\mathsf{N}}}\Theta_v$.  For each $v\in 2^{\mathsf{N}}$, by  item (\ref{der_rule_pinc_winci}) we have that $\Theta_v\vdash\Phi_v\vdash\Theta_v\wedge\Phi_v$. Hence we conclude $\vdash \bigsor_{v\in 2^{\mathsf{N}}}(\Theta_v\wedge \Phi_v)$ by applying \sori and \sore.
\end{proof}

Next, we show that the rule $\vee_{\subseteq_0}\textsf{E}$ for single primitive inclusion atoms can be generalized to one with multiple primitive inclusion atoms, and further to one with multiple disjunctions.

\begin{lemma}\label{veesub_eli}
\begin{enumerate}[(i)]
\item Let $\mathsf{x}_1\subseteq\mathsf{a}_1,\dots, \mathsf{x}_k\subseteq\mathsf{a}_k$ be primitive inclusion atoms. If 
\[\Gamma,\phi,\mathsf{x}_1\subseteq\mathsf{a}_1,\dots, \mathsf{x}_k\subseteq\mathsf{a}_k\vdash\chi,~~ \Gamma,\psi\vdash\chi,~~\text{and }\Gamma,\phi\vee\psi,\mathsf{x}_1\subseteq\mathsf{a}_1,\dots, \mathsf{x}_k\subseteq\mathsf{a}_k\vdash\chi,\] 
then $\Gamma,(\phi\wedge\mathsf{x}_1\subseteq\mathsf{a}_1\wedge\dots\wedge \mathsf{x}_k\subseteq\mathsf{a}_k)\vee\psi\vdash\chi$.
\item\label{veesub_eli_c} Let $I$ be a nonempty finite index set. For each $i\in I$, let $\iota_i$ be the conjunction of some  finitely many primitive inclusion atoms. If for every nonempty $J\subseteq I$,
\begin{equation}\label{veesub_eli_eq2}
\Gamma,\bigvee_{i\in J}\phi_i,\bigwedge_{i\in J}\iota_i\vdash\chi,
\end{equation}
then $\Gamma,\bigvee_{i\in I}(\phi_i\wedge\iota_i)\vdash\chi$.
\end{enumerate}
\end{lemma}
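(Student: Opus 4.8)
The plan is to derive both items from the single-atom rule $\vee_{\subseteq_0}\textsf{E}$ by induction, pushing the primitive inclusion atoms into the context so that the induction hypothesis supplies precisely the third (``$\phi\vee\psi$ with the atom still available'') premise that an ordinary disjunction elimination cannot provide. The reason $\vee\textsf{E}$ is unavailable here is its side condition, which forbids the non-classical formulas in $\Gamma$ and the inclusion atoms among the undischarged assumptions; so the characteristic third premise of $\vee_{\subseteq_0}\textsf{E}$ must be manufactured by the recursion itself.

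For item (i) I would induct on the number $k$ of primitive inclusion atoms, writing the three hypotheses as (a), (b), (c). The base case $k=1$ is literally the rule $\vee_{\subseteq_0}\textsf{E}$. For the step, set $\phi':=\phi\wedge\mathsf{x}_2\subseteq\mathsf{a}_2\wedge\dots\wedge\mathsf{x}_k\subseteq\mathsf{a}_k$ and apply $\vee_{\subseteq_0}\textsf{E}$ to $(\phi'\wedge\mathsf{x}_1\subseteq\mathsf{a}_1)\vee\psi$ with the single atom $\mathsf{x}_1\subseteq\mathsf{a}_1$. Its first premise $\Gamma,\phi',\mathsf{x}_1\subseteq\mathsf{a}_1\vdash\chi$ follows from (a) by \ce; its second premise is (b); and its third premise $\Gamma,\phi'\vee\psi,\mathsf{x}_1\subseteq\mathsf{a}_1\vdash\chi$ is obtained by applying the induction hypothesis for the $k-1$ atoms $\mathsf{x}_2\subseteq\mathsf{a}_2,\dots,\mathsf{x}_k\subseteq\mathsf{a}_k$ over the enlarged context $\Gamma\cup\{\mathsf{x}_1\subseteq\mathsf{a}_1\}$, fed the weakened/reordered versions of (a), (b), (c). The conclusion is $(\phi'\wedge\mathsf{x}_1\subseteq\mathsf{a}_1)\vee\psi\vdash\chi$, which is the desired clause after reassociating the conjunction.

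For item (ii), call the hypothesis (ranging over all nonempty $J\subseteq I$) $(\ast)$. I would prove a generalized statement by induction on $|T|$: for disjoint $S,T\subseteq I$ with $S\cup T\neq\emptyset$, $(\ast)$ implies $\Gamma,\{\iota_i : i\in S\},\big(\bigvee_{i\in S}\phi_i\big)\vee\bigvee_{i\in T}(\phi_i\wedge\iota_i)\vdash\chi$, with the first (resp.\ second) big disjunct omitted when $S$ (resp.\ $T$) is empty. The target clause is the instance $S=\emptyset$, $T=I$. The base case $T=\emptyset$ is $(\ast)$ with $J=S$, using \ci to gather $\{\iota_i:i\in S\}$ into $\bigwedge_{i\in S}\iota_i$. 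For the step, pick $t\in T$ and peel off the disjunct $\phi_t\wedge\iota_t$ by item (i) applied to the atoms of $\iota_t$, taking $\psi:=\big(\bigvee_{i\in S}\phi_i\big)\vee\bigvee_{i\in T\setminus\{t\}}(\phi_i\wedge\iota_i)$ and context $\Gamma\cup\{\iota_i:i\in S\}$. Its first premise is $(\ast)$ with $J=\{t\}$ (weakened); its second premise $\Gamma,\{\iota_i:i\in S\},\psi\vdash\chi$ is the induction hypothesis for the pair $(S,T\setminus\{t\})$; and its third premise, after absorbing $\iota_t$ into the context and merging $\phi_t$ into the bare disjunction, is exactly the induction hypothesis for the pair $(S\cup\{t\},T\setminus\{t\})$. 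Both invoked instances have strictly smaller $|T|$.

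Throughout I will use freely the structural weakening of contexts, the commutativity and associativity of $\wedge$ and of $\vee$ (all readily derivable), and ex falso (\Cref{der_rule_prop}(\ref{der_rule_exfalso})) to dispatch the degenerate empty-disjunction cases. The main obstacle is not any single calculation but the design of the generalized induction statement in item (ii): splitting the index set into a ``context part'' $S$ (whose inclusion atoms sit in the antecedent and whose $\phi_i$ occur bare) and an ``active part'' $T$, and inducting on $|T|$, so that \emph{both} the ``drop $t$'' premise and the ``move $t$ into the context'' premise fall out as smaller instances of the very same statement. Getting this bookkeeping right is what lets item (i) be applied exactly once per disjunct.
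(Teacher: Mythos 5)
Your proof is correct and takes essentially the same route as the paper's: item (i) by iterated application of $\vee_{\subseteq_0}\textsf{E}$, discharging one primitive inclusion atom at a time into the context, and item (ii) via a generalized claim that splits the index set into a ``context'' part (whose atoms sit among the assumptions and whose $\phi_i$ occur bare) and an ``active'' part, with induction on the size of the latter and item (i) supplying the inductive step. The only difference is bookkeeping: you allow the context part $S$ to be empty, so the target is a direct instance of your generalized claim and a single induction suffices, whereas the paper's auxiliary claim (\ref{veesub_eli_eq3}) requires $K\neq\emptyset$ and is therefore followed by a second induction on $|I|$.
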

\begin{proof}
(i). To show that $\Gamma,\big((\phi\wedge\mathsf{x}_2\subseteq\mathsf{a}_2\wedge\dots\wedge \mathsf{x}_k\subseteq\mathsf{a}_k)\wedge \mathsf{x}_1\subseteq\mathsf{a}_1\big)\vee\psi\vdash\chi$, by $\vee_{\subseteq_0}\textsf{E}$ it suffices to show that 
\[\Gamma,\phi,\mathsf{x}_2\subseteq\mathsf{a}_2,\dots, \mathsf{x}_k\subseteq\mathsf{a}_k,\mathsf{x}_1\subseteq\mathsf{a}_1\vdash\chi,\quad \Gamma,\psi\vdash\chi, \]
\[\text{and }\Gamma,(\phi\wedge\mathsf{x}_2\subseteq\mathsf{a}_2\wedge\dots\wedge \mathsf{x}_k\subseteq\mathsf{a}_k)\vee\psi,\mathsf{x}_1\subseteq\mathsf{a}_1\vdash\chi.\]
The first two clauses are given already by the assumption. To prove the third clause, by $\vee_{\subseteq_0}\textsf{E}$ again, it suffices to prove that
\[\Gamma,\phi,\mathsf{x}_3\subseteq\mathsf{a}_3,\dots, \mathsf{x}_k\subseteq\mathsf{a}_k,\mathsf{x}_1\subseteq\mathsf{a}_1,\mathsf{x}_2\subseteq\mathsf{a}_2\vdash\chi,\quad \Gamma,\psi,\mathsf{x}_1\subseteq\mathsf{a}_1\vdash\chi, \]
\[\text{and }\Gamma,(\phi\wedge\mathsf{x}_3\subseteq\mathsf{a}_3\wedge\dots\wedge \mathsf{x}_k\subseteq\mathsf{a}_k)\vee\psi,\mathsf{x}_1\subseteq\mathsf{a}_1,\mathsf{x}_2\subseteq\mathsf{a}_2\vdash\chi.\]
Again, the first two clauses follow from the assumption, and the third clause can be reduced to simpler clauses by applying $\vee_{\subseteq_0}\textsf{E}$. Proceed this way, in the end it remains to show that 
\(\Gamma,\phi\vee\psi,\mathsf{x}_1\subseteq\mathsf{a}_1,\dots,\mathsf{x}_k\subseteq\mathsf{a}_k\vdash\chi.\)
But this is also given by the assumption, and we are then done.

(ii). Suppose (\ref{veesub_eli_eq2}) holds for all nonempty $J\subseteq I$. We first prove a lemma that for any disjoint $K,L\subseteq I$ with $K\neq\emptyset$,
\begin{equation}\label{veesub_eli_eq3}
\Gamma,\bigvee_{k\in K}\phi_k\vee\bigvee_{l\in L}(\phi_l\wedge\iota_l),\bigwedge_{k\in K}\iota_k\vdash\chi.
\end{equation}
We proceed by induction on $|L|$. If $L=\emptyset$, then $\Gamma,\bigvee_{k\in K}\phi_k,\bigwedge_{k\in K}\iota_k\vdash\chi$ is given by assumption (since $K\neq\emptyset$). Suppose the claim holds for $L$. We show that 
\[\Gamma,\bigvee_{k\in K}\phi_k\vee(\bigvee_{l\in L}(\phi_l\wedge\iota_l))\vee(\phi_0\wedge\iota_0),\bigwedge_{k\in K}\iota_k\vdash\chi.\]
By item (i), it suffices to show that 
\[\Gamma,\phi_0,\iota_0,\bigwedge_{k\in K}\iota_k\vdash\chi,\quad \Gamma,\bigvee_{k\in K}\phi_k\vee\bigvee_{l\in L}(\phi_l\wedge\iota_l),\bigwedge_{k\in K}\iota_k\vdash\chi\]
\[\text{and }\Gamma,\phi_0\vee\bigvee_{k\in K}\phi_k\vee\bigvee_{l\in L}(\phi_l\wedge\iota_l),\bigwedge_{k\in K}\iota_k,\iota_0\vdash\chi.\]
The first clause follows from the assumption that $\Gamma,\phi_0,\iota_0\vdash\chi$. The last two clauses follow from the induction hypothesis.

Now we prove $\Gamma,\bigvee_{i\in I}(\phi_i\wedge\iota_i)\vdash\chi$ by induction on $|I|$. If $|I|=1$, the claim trivially holds.
Suppose the claim holds for $I$. We show that the claim holds also for $I\cup\{0\}$, that is $\Gamma,(\bigvee_{i\in I}(\phi_{i}\wedge\iota_{i}))\vee(\phi_{0}\wedge\iota_{0})\vdash\chi$ holds, assuming that (\ref{veesub_eli_eq2}) holds for any $J\subseteq I\cup\{0\}$. By item (i), it suffices to show that
\[\Gamma,\bigvee_{i\in I}(\phi_{i}\wedge\iota_{i})\vdash\chi,\quad \Gamma,\phi_{0},\iota_{0}\vdash\chi,\quad \Gamma,(\bigvee_{i\in I}(\phi_{i}\wedge\iota_{i}))\vee\phi_0,\iota_0\vdash\chi.\]
The second clause is given by the assumption. The first clause follows from the induction hypothesis, since  for every subset $J\subseteq I\subseteq I\cup\{0\}$, (\ref{veesub_eli_eq2}) holds by assumption. The third clause follows from (\ref{veesub_eli_eq3}).
%
\end{proof}

We call a primitive inclusion atom $\mathsf{x}\subseteq \mathsf{p}$ {\em regular} if the $\mathsf{p}$ is a sequence of distinct propositional variables. For example, the primitive inclusion atoms $\bot\top\bot\subseteq ppq$ and $\top\bot\bot\subseteq pq\bot$ are not regular. We now show that every nontrivial primitive inclusion atom can be transformed to a regular one.

\begin{lemma}\label{regular_incl_atm_lm}
Let $\mathsf{x}\subseteq \mathsf{a}$ be a primitive inclusion atom. Either $\mathsf{x}\subseteq \mathsf{a}\dashv\vdash\top$, or $\mathsf{x}\subseteq \mathsf{a}\dashv\vdash\bot$, or $\mathsf{x}\subseteq \mathsf{a}\dashv\vdash\mathsf{x}_0\subseteq \mathsf{a}_0$ for some regular primitive inclusion atom $\mathsf{x}_0\subseteq \mathsf{a}_0$.
\end{lemma}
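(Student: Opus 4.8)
The plan is to read off a complete case split from the semantics and then realize each case syntactically using the inclusion-atom rules of \Cref{rules_PIncz}. Semantically, for $X\neq\emptyset$ the atom $\mathsf{x}\subseteq\mathsf{a}$ asks for a single $u\in X$ with $u(\mathsf{a})$ equal to the constant tuple named by $\mathsf{x}$; hence a constant coordinate $a_i$ is either vacuously met (when $a_i^{x_i}=\top$) or unmeetable (when $a_i^{x_i}=\bot$), while two coordinates $a_i=a_j=p$ impose the same demand if $x_i=x_j$ but contradictory demands if $x_i\neq x_j$. I therefore call the atom \emph{conflicting} if some constant coordinate has $a_i^{x_i}=\bot$ or some repeated variable has $x_i\neq x_j$, and \emph{conflict-free} otherwise. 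The trichotomy then comes out as: conflicting $\Rightarrow\bot$; conflict-free reducing to the empty atom $\Rightarrow\top$; and conflict-free otherwise $\Rightarrow$ a regular atom.

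For the conflicting case I would argue uniformly. A conflict makes the classical conjunction $a_1^{x_1}\wedge\dots\wedge a_n^{x_n}$ provably contradictory (it either contains the conjunct $\bot$ or contains both $p$ and $\neg p$), so the usual classical rules give $\vdash\neg(a_1^{x_1}\wedge\dots\wedge a_n^{x_n})$. Feeding this into \Cref{der_rule_prop_pinc}(\ref{der_rule_pinc_ince}) yields $\mathsf{x}\subseteq\mathsf{a}\vdash\bot$, and the converse $\bot\vdash\mathsf{x}\subseteq\mathsf{a}$ is the usual \emph{ex falso} derivation from $\neg\textsf{I}$ and $\neg\textsf{E}$ (as in \Cref{der_rule_prop}(\ref{der_rule_exfalso})). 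Hence $\mathsf{x}\subseteq\mathsf{a}\dashv\vdash\bot$.

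For the conflict-free case I would induct on the length $n$, at each step deleting one redundant coordinate up to $\dashv\vdash$. The base $n=0$ is $\langle\rangle\subseteq\langle\rangle\dashv\vdash\top$, which holds since \incid gives $\vdash\langle\rangle\subseteq\langle\rangle$. In the step, if the atom is already regular there is nothing to do (third case); otherwise I bring a redundant coordinate into position by \incexc (an invertible rule, so harmless up to $\dashv\vdash$). If it is a matching constant $a_1\in\{\top,\bot\}$, then deleting it is an equivalence: the $\vdash$ direction is \incexc followed by \incctr (projection), and the $\dashv$ direction re-inserts the constant by the appropriate instance of \inczext (the $\top\mathsf{a}\subseteq\top\mathsf{b}$ or $\bot\mathsf{a}\subseteq\bot\mathsf{b}$ form, needing no side premise since the coordinate is constant). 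If instead it is a repeated variable $a_1=a_2=p$ with $x_1=x_2$, deleting one copy is again an equivalence: $\vdash$ is projection as before, while $\dashv$ duplicates the surviving coordinate by $\subseteq\textsf{Wk}$, turning $x_2\mathsf{x}''\subseteq p\mathsf{a}''$ into $x_2x_2\mathsf{x}''\subseteq pp\mathsf{a}''=x_1x_2\mathsf{x}''\subseteq pp\mathsf{a}''$. In either case the shorter atom is still conflict-free, so the induction hypothesis applies and transitivity of $\dashv\vdash$ closes the step.

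The routine parts are the permutation and projection steps furnishing the $\vdash$ directions, which are instances of \incexc and \incctr (equivalently the projection rule $\subseteq\textsf{Proj}$). The point needing care—the main obstacle—is the reconstruction ($\dashv$) directions in the conflict-free case: one must check that $\subseteq\textsf{Wk}$ and the constant-prefix forms of \inczext rebuild the deleted coordinate with exactly the right $x$-value, which is precisely where conflict-freeness is used (namely $a_i^{x_i}=\top$ for a deleted constant and $x_i=x_j$ for a deleted repeat), and that iterating these reinsertions together with \incexc recovers the original sequence $\mathsf{a}$.
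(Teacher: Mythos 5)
Your proof is correct and takes essentially the same route as the paper: permute the offending coordinate with \incexc, delete or reinsert it with \incctr, \inczext and $\subseteq\textsf{Wk}$ when it matches, and collapse to $\bot$ when it conflicts. The only difference is organizational: you detect all conflicts up front and dispose of them uniformly via \Cref{der_rule_prop_pinc}(\ref{der_rule_pinc_ince}), whereas the paper derives $\bot$ locally during the elimination by applying \inccmp to the tautologies $\bot\leftrightarrow\bot$ and $p\leftrightarrow p$ --- the same mechanism in the end, since \Cref{der_rule_prop_pinc}(\ref{der_rule_pinc_ince}) is itself proved by \inccmp.
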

\begin{proof}
We first eliminate constants $\top,\bot$ one by one from the right side of a primitive inclusion atom $\mathsf{x}\subseteq \mathsf{a}$. Consider a constant $v$ in $\mathsf{a}$. By \incexc we may without loss of generality assume that $v$ occurs at the last position of the sequence $\mathsf{a}$, i.e., the inclusion atom $\mathsf{x}\subseteq \mathsf{a}$ is $\mathsf{y}z\subseteq\mathsf{b}v$.  If $z=v$, 
by \incctr and \inczext, we have that 
\[\mathsf{y}\top\subseteq \mathsf{b}\top\dashv\vdash \mathsf{y}\subseteq \mathsf{b}\text{ and }\mathsf{y}\bot\subseteq \mathsf{b}\bot\dashv\vdash \mathsf{y}\subseteq \mathsf{b}.\]
For the special case when $\mathsf{y}$ and $\mathsf{b}$ are the empty sequence $\langle\rangle$, by \incid we have $\vdash \langle\rangle\subseteq \langle\rangle$. Then we derive that $\mathsf{y}\top\subseteq \mathsf{b}\top\dashv\vdash\top$ and $\mathsf{y}\bot\subseteq \mathsf{b}\bot\dashv\vdash\top$ by \topi.

If $z\neq v$, 
we show that 
\[\mathsf{y}\top\subseteq \mathsf{b}\bot\dashv\vdash\bot\text{ and }\mathsf{y}\bot\subseteq \mathsf{b}\top\dashv\vdash\bot.\] 
The right to left direction of the above two clauses follows from \Cref{der_rule_prop}(\ref{der_rule_exfalso}) (which is true also for the system of \PIncz). For the other direction, we only give the proof for $\mathsf{y}\top\subseteq \mathsf{b}\bot\vdash\bot$, the other case being symmetric. By \inczext, we have that $\mathsf{y}\top\subseteq \mathsf{b}\bot\vdash\mathsf{y}\top\bot\subseteq \mathsf{b}\bot\bot$. Since $\vdash\bot\leftrightarrow\bot$ by classical rules, we derive by $\subseteq\!\textsf{Cmp}$ that $\mathsf{y}\top\subseteq \mathsf{b}\bot\vdash\top\leftrightarrow\bot\vdash\bot$, as required.

Lastly, we remove repeated propositional variables from the right side of a primitive inclusion atom $\mathsf{x}\subseteq \mathsf{a}$. By \incctr and $\subseteq\textsf{Wk}$, we have that 
\[\mathsf{y}\top\top\subseteq \mathsf{b}pp\dashv\vdash \mathsf{y}\top\subseteq \mathsf{b}p~~\text{ and }~~\mathsf{y}\bot\bot\subseteq \mathsf{b}pp\dashv\vdash \mathsf{y}\bot\subseteq \mathsf{b}p.\]
For the last case, we show that
\(\mathsf{y}\top\bot\subseteq \mathsf{b}pp\dashv\vdash \bot.\)
The right to left direction follows from \Cref{der_rule_prop}(\ref{der_rule_exfalso}). For the other direction, since $\vdash p\leftrightarrow p$, by  $\subseteq\!\textsf{Cmp}$ we derive that $\mathsf{y}\top\bot\subseteq \mathsf{b}pp\vdash\top\leftrightarrow\bot\vdash\bot$.
\end{proof}

Finally, we are ready give the proof the normal form lemma, \Cref{DNF_PInc}.

\begin{proof}[Proof of \Cref{DNF_PInc}]
We  prove the lemma by induction on  $\phi$. 
If $\phi(p_{1},\dots,p_{n})=p_{i}$, then 
\begin{align*}
p_{i}&\dashv\vdash\mathop{\bigsor_{v\in 2^{\mathsf{N}\setminus\{p_i\}}}}(p_{1}^{v(1)}\wedge\dots\wedge p_{i-1}^{v(i-1)}\wedge p_{i}\wedge p_{i+1}^{v(i+1)}\wedge\dots\wedge p_{n}^{v(n)}) \\
&\dashv\vdash\mathop{\bigsor_{v\in 2^{\mathsf{N}\setminus\{p_i\}}}}(\Theta_{\{v\}}\wedge p_i\wedge \underline{v(1)}\dots\underline{v(i-1)}\top\underline{v(i+1)}\dots\underline{v(n)}\subseteq p_1\dots p_n)\tag{\Cref{der_rule_prop_pinc}(\ref{der_rule_pinc_winci})}\\
&\dashv\vdash\bigvee_{\{u\}\in \mathcal{X}_i}(\Theta_{\{u\}}\wedge\Phi_{\{u\}})\text{ where }\mathcal{X}_i=\{\{u\}\mid u\in 2^{\mathsf{N}},u(i)=1\}.
\end{align*}

 If $\phi(p_{1},\dots,p_{n})=\top$,  by \Cref{der_rule_prop_pinc}(\ref{der_rule_pinc_allposs}) we have  that
 \[\vdash \bigsor_{\{v\}\in \mathcal{X}_\top}(\Theta_{\{v\}}\wedge\Phi_{\{v\}}),,\text{ where }\mathcal{X}_\top=\{\{v\}\mid v\in 2^{\mathsf{N}}\}.\]
 Then, by \topi, we have $\top\dashv\vdash\bigsor_{\{v\}\in \mathcal{X}_\top}(\Theta_{\{v\}}\wedge\Phi_{\{v\}})$.
If $\phi(p_{1},\dots,p_{n})=\bot$, then trivially $\bot\dashv\vdash\bigsor\emptyset=\bot$.


If $\phi(p_{1},\dots,p_{n})$ is a  primitive inclusion atom. By \Cref{regular_incl_atm_lm} and \incexc, we may assume that $\phi=\top$ or $\phi=\bot$ or $\phi=x_1\dots x_k\subseteq p_1\dots p_k$ ($k\leq n$) is regular. The first two cases reduce to the previous cases. For the last case, we show that 
\(\phi\dashv\vdash\bigsor_{X\in \mathcal{X}}(\Theta_X\wedge\Phi_X)\)
where 
\[\mathcal{X}=\{X\subseteq 2^{\mathsf{N}}\mid \exists v\in X\text{ such that }v(p_1)=x_1,\dots,v(p_k)=x_k\}.\] 
For the right to left direction, by \sore it suffices to show that $\Theta_X,\Phi_X\vdash x_1\dots x_k\subseteq p_1\dots p_k$ for each $X\in \mathcal{X}$. For the valuation $ v\in X$ such that $v(p_1)=x_1,\dots,v(p_k)=x_k$, we know that 
 $x_1\dots x_k\underline{v(k+1)}\dots \underline{v(n)}\subseteq p_{1}\dots p_{k}p_{k+1}\dots p_n$
is a conjunct in $\Phi_X$. Thus, we derive $\Phi_X\vdash x_1\dots x_k\subseteq p_1\dots p_k$ by  \incctr. 

Conversely, for the left to right direction, we first have by \Cref{der_rule_prop_pinc}(\ref{der_rule_pinc_allposs}) that  $\vdash \bigsor_{v\in 2^{\mathsf{N}}}(\Theta_v\wedge \Phi_v)$. Then it suffices to derive $\bigsor_{v\in 2^{\mathsf{N}}}(\Theta_v\wedge \Phi_v),x_1\dots x_k\subseteq p_1\dots p_k\vdash\bigsor_{X\in \mathcal{X}}(\Theta_X\wedge\Phi_X)$, which by \Cref{veesub_eli}(\ref{veesub_eli_c}) reduces to derive that for each nonempty $Y\subseteq 2^{\mathsf{N}}$, 
\begin{equation}\label{DNF_PInc_eq1}
\Theta_Y, \Phi_Y,x_1\dots x_k\subseteq p_1\dots p_k\vdash \bigsor_{X\in \mathcal{X}}(\Theta_X\wedge\Phi_X).
\end{equation}
Now, if $Y\in \mathcal{X}$, then the above holds by \sori. Otherwise, if $Y\notin \mathcal{X}$, then for each $v\in Y$, $v(p_i)\neq x_i$ for some $1\leq i\leq k$. Thus, 
\[\Theta_v\vdash p_1^{v(1)}\wedge\dots\wedge p_k^{v(k)}\vdash \neg ( p_1^{x_1}\wedge\dots\wedge p_k^{x_k}),\]
which implies $\Theta_Y\vdash \neg ( p_1^{x_1}\wedge\dots\wedge p_k^{x_k})$ by \sore. By \Cref{der_rule_prop_pinc}(\ref{der_rule_pinc_ince}), we have that $\neg ( p_1^{x_1}\wedge\dots\wedge p_k^{x_k}),x_1\dots x_k\subseteq p_1\dots p_k\vdash\bot$.
Hence, we obtain (\ref{DNF_PInc_eq1}) by \Cref{der_rule_prop}(\ref{der_rule_exfalso}).



\vspace{0.5\baselineskip}

Suppose $\alpha$ is a classical formula, and $\alpha\dashv\vdash\bigsor_{X\in\mathcal{X}}(\Theta_{X}\wedge\Phi_X)$.  We show that $\neg\alpha\dashv\vdash\bigsor_{v\in 2^{\mathsf{N}}\setminus\bigcup\mathcal{X}}(\Theta_{\{v\}}\wedge\Phi_{\{v\}})$. It is sufficient to prove that $\bigsor_{X\in\mathcal{X}}(\Theta_{X}\wedge\Phi_X)\dashv\vdash\bigsor_{v\in \bigcup\mathcal{X}}\Theta_{\{v\}}$, since we will then have that
\begin{align*}
\neg\alpha&\dashv\vdash\neg\bigsor_{v\in \bigcup\mathcal{X}}\Theta_{\{v\}}\tag{by the standard rules for classical formulas}\\
&\dashv\vdash\bigsor_{v\in 2^{\mathsf{N}}\setminus\bigcup\mathcal{X}}\Theta_{\{v\}}\tag{by the standard rules for classical formulas}\\
&\dashv\vdash\bigsor_{v\in 2^{\mathsf{N}}\setminus\bigcup\mathcal{X}}(\Theta_{\{v\}}\wedge\Phi_{\{v\}})\tag{\Cref{der_rule_prop_pinc}(\ref{der_rule_pinc_winci}) and \ce}
\end{align*}
 Now, by the same argument as that in the proof of Lemma \ref{DNF_PU}, since $\alpha$ is flat, we have that for each $v\in \bigcup\mathcal{X}$, $\{v\}\in \mathcal{X}$.
 Moreover, by \Cref{der_rule_prop_pinc}(\ref{der_rule_pinc_winci}), $\Theta_{\{v\}}\vdash \Theta_{\{v\}}\wedge\Phi_{\{v\}}$. Thus, the direction $\bigsor_{v\in \bigcup\mathcal{X}}\Theta_{\{v\}}\vdash\bigsor_{X\in\mathcal{X}}(\Theta_{X}\wedge\Phi_X)$ follows from \sori and \sore. For the other direction,  for each $X\in\mathcal{X}$, we derive by \sori  that
\[
\Theta_{X}\wedge\Phi_X\vdash \bigsor_{u\in X}\Theta_{\{u\}}\vdash\bigsor_{v\in \bigcup\mathcal{X}}\Theta_{\{v\}},
\]
from which $\bigsor_{X\in\mathcal{X}}(\Theta_{X}\wedge\Phi_X)\vdash\bigsor_{v\in \bigcup\mathcal{X}}\Theta_{\{v\}}$ follows by $\sor$\textsf{E}.

\vspace{0.5\baselineskip}

Suppose $\psi(\mathsf{N})$ and $\chi(\mathsf{N})$ satisfy  
  \begin{equation}\label{pu_nf_proof_IH}
  \psi\dashv\vdash\bigsor_{X\in\mathcal{X}}(\Theta_{X}\wedge\Phi_X)\quad\text{and}\quad\chi\dashv\vdash\bigsor_{Y\in\mathcal{Y}}(\Theta_{Y}\wedge\Phi_Y),
  \end{equation}
for some  finite sets $\mathcal{X}$ and $\mathcal{Y}$ of $\mathsf{N}$-teams. The case $\phi=\psi\sor\chi$ follows from induction hypothesis. If $\phi=\psi\wedge\chi$, we show that $\psi\wedge\chi\dashv\vdash\bigsor_{Z\in \mathcal{Z}}(\Theta_Z\wedge\Phi_Z)$, where
\[\mathcal{Z}=\{\bigcup \mathcal{X}'\mid \mathcal{X}'\subseteq \mathcal{X}\text{ and }\bigcup \mathcal{X}'=\bigcup \mathcal{Y}'\text{ for some }\mathcal{Y}'\subseteq \mathcal{Y}\}.\]
For the right to left direction, by \sore it suffices to derive $\Theta_Z,\Phi_Z\vdash\psi\wedge\chi$ for each $Z=\bigcup \mathcal{X}'=\bigcup \mathcal{Y}'\in\mathcal{Z}$, where $\mathcal{X}'\subseteq \mathcal{X}$ and $\mathcal{Y}'\subseteq \mathcal{Y}$.  By \Cref{PInc_dstr_lm} and \sori, we have that $\Theta_Z,\Phi_Z\vdash\bigvee_{X\in \mathcal{X}'}(\Theta_X\wedge\Phi_X)\vdash\bigvee_{X\in \mathcal{X}}(\Theta_X\wedge\Phi_X)\vdash\psi$. Similarly, $\Theta_Z,\Phi_Z\vdash\chi$.

For the left to right direction, by \Cref{veesub_eli}(\ref{veesub_eli_c}) it suffices to prove that for each nonempty $\mathcal{X}'\subseteq \mathcal{X}$ and $\mathcal{Y}'\subseteq \mathcal{Y}$,
\[
\bigsor_{X\in \mathcal{X}'}\Theta_X,\bigwedge_{X\in \mathcal{X}'}\Phi_X,\bigsor_{Y\in \mathcal{Y}'}\Theta_Y,\bigwedge_{Y\in \mathcal{Y}'}\Phi_Y\vdash\bigsor_{Z\in\mathcal{Z}}(\Theta_Z\wedge\Phi_Z),
\]
which reduces to showing that 
\begin{equation}\label{DNF_PInc_eq2}
\Theta_{\bigcup\mathcal{X}'},\Theta_{\bigcup\mathcal{X}'},\Theta_{\bigcup\mathcal{Y}'},\Phi_{\bigcup\mathcal{Y}'}\vdash\bigsor_{Z\in\mathcal{Z}}(\Theta_Z\wedge\Phi_Z)
\end{equation}
as elements in $\mathcal{X}'$ and $\mathcal{Y}'$ may not be disjoint.
Now, if $\bigcup \mathcal{X}'=\bigcup \mathcal{Y}'\in \mathcal{Z}$, then the above clause follows easily from \sori. Otherwise, if $\bigcup \mathcal{X}'\neq\bigcup \mathcal{Y}'$, 
assume w.l.o.g. there exists some $v\in \bigcup \mathcal{X}'\setminus \bigcup \mathcal{Y}'$. First, we derive 
\(\Phi_v,\neg\Theta_{v}\vdash\bigsor_{Z\in\mathcal{Z}}(\Theta_Z\wedge\Phi_Z)\) by \Cref{der_rule_prop_pinc}(\ref{der_rule_pinc_ince}).
Since $v\notin \bigcup \mathcal{Y}'$, by the standard rules for classical formulas, we have that  $\Theta_{\bigcup\mathcal{Y}'}\vdash \neg\Theta_{v}$. Putting these together, we obtain (\ref{DNF_PInc_eq2}).
\end{proof}

\subsection{\PInc}

In this section, we extend the system of \PIncz to obtain a sound and complete system for propositional inclusion logic \PInc with arbitrary inclusion atoms. Recall from \Cref{inc_2prim}(\ref{inc_2prim_1}) that arbitrary inclusion atoms are definable in terms of primitive ones. 
Such an interaction between arbitrary and primitive inclusion atoms are characterized by the two rules we add to the system of \PInc, inclusion atom extension \incext  and   reduction \incrdt rule. 


\begin{definition}
The system of \PInc consists of all rules in the system of \PIncz together with the rules in \Cref{rules_PInc}, where $\mathsf{a}$ and $\mathsf{b}$ are arbitrary (and possibly empty) sequences of elements in $\textsf{Prop}\cup\{\top,\bot\}$,  $\mathsf{x}$ stands for an arbitrary sequence of constants from $\{\top,\bot\}$, and $|\mathsf{a}|$ denotes the length of the sequence $\mathsf{a}$.
\end{definition}

\begin{table}[t]
\begin{center}
\caption{Rules for inclusion atoms}
\vspace{4pt}
\setlength{\tabcolsep}{6pt}
\renewcommand{\arraystretch}{1.8}
\setlength{\extrarowheight}{1pt}
\scalebox{.92}{\begin{tabular}{|C{0.46\linewidth}C{0.46\linewidth}|}
\hline
\AxiomC{}\noLine\UnaryInfC{{\small$D$}}\noLine\UnaryInfC{$\displaystyle\bigwedge_{\mathsf{x}\in\{\top,\bot\}^{|\mathsf{a}|}}\big(\mathsf{a}^{\mathsf{x}}\to\mathsf{x}\subseteq \mathsf{b}\big)$}\RightLabel{\incext} \UnaryInfC{$\mathsf{a}\subseteq \mathsf{b}$}\noLine\UnaryInfC{}   \DisplayProof
&\AxiomC{{\small$D$}}\noLine\UnaryInfC{$\mathsf{a}\subseteq \mathsf{b}$}\RightLabel{\incrdt} \UnaryInfC{$\mathsf{a}^{\mathsf{x}}\to\mathsf{x}\subseteq \mathsf{b}$}\noLine\UnaryInfC{}   \DisplayProof\\
\hline
\end{tabular}
}
\label{rules_PInc}
\vspace{-26pt}
\end{center}
\end{table}


By \Cref{inc_2prim}(\ref{inc_2prim_1}) the two new rules \incext and \incrdt are clearly sound, and thus the system is sound.
By applying \incext and \incrdt, we can easily reduce an arbitrary inclusion atom to a formula with primitive inclusion atoms only:
\[\mathsf{a}\subseteq \mathsf{b}\dashv\vdash \bigwedge_{\mathsf{x}\in\{\top,\bot\}^{|\mathsf{a}|}}\big(\mathsf{a}^{\mathsf{x}}\to\mathsf{x}\subseteq \mathsf{b}\big).\]
From this the completeness of the system of \PInc follows.

\begin{theorem}[Completeness]
For any set $\Gamma\cup\{\phi\}$ of \PInc-formulas, we have that $\Gamma\models\phi \iff \Gamma\vdash\phi$.  
\end{theorem}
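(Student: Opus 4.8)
The plan is to deduce the theorem from the already-established completeness of \PIncz (\Cref{PInc_completeness}) by a translation argument, exploiting that the two extra rules \incext and \incrdt make every arbitrary inclusion atom provably equivalent to a \PIncz-formula. Soundness is already \Cref{sound_pinc}, so only the direction $\Gamma\models\phi\Rightarrow\Gamma\vdash\phi$ remains. First I would define a translation $(\cdot)^\ast$ sending each \PInc-formula to a \PIncz-formula by recursion: classical atoms, $\bot$, $\top$, negated classical formulas and primitive inclusion atoms are left untouched, $\wedge$ and $\vee$ are pushed through, and each arbitrary inclusion atom $\mathsf{a}\subseteq\mathsf{b}$ is replaced by $\bigwedge_{\mathsf{x}\in\{\top,\bot\}^{|\mathsf{a}|}}(\mathsf{a}^{\mathsf{x}}\to\mathsf{x}\subseteq\mathsf{b})$, which is a genuine \PIncz-formula.

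The heart of the argument is a congruence lemma: every \PInc-formula $\phi$ satisfies $\phi\dashv\vdash\phi^\ast$ in the system of \PInc. I would prove this by induction on $\phi$. The base case for an arbitrary inclusion atom is exactly the interderivability $\mathsf{a}\subseteq\mathsf{b}\dashv\vdash\bigwedge_{\mathsf{x}}(\mathsf{a}^{\mathsf{x}}\to\mathsf{x}\subseteq\mathsf{b})$ obtained from \incext and \incrdt (the provable form of Equivalence (\ref{inc_def_prim_inc})); all other atomic cases are trivial since $(\cdot)^\ast$ fixes them. For $\phi=\psi\wedge\chi$ the step is immediate from \ci and \ce. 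For $\phi=\psi\sor\chi$ I would use a derived disjunction-substitution rule: if $\psi\vdash\theta$ by a closed derivation, then $\psi\sor\xi\vdash\theta\sor\xi$; this is an instance of $\vee\textsf{E}$ applied to $\psi\sor\xi$, deriving $\theta\sor\xi$ in the $[\psi]$-branch by the given derivation followed by \sori and in the $[\xi]$-branch by \sori. Chaining this on both disjuncts (using the symmetric form of \sori) yields $\psi\sor\chi\dashv\vdash\psi^\ast\sor\chi^\ast$ from the induction hypotheses. Note that negation never needs to be crossed, since in \PInc the connective $\neg$ occurs only in front of classical formulas, which contain no inclusion atoms.

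With the congruence lemma in hand I would finish as follows. Suppose $\Gamma\models\phi$. By \Cref{sound_pinc}, $\phi\dashv\vdash\phi^\ast$ gives $\phi\equiv\phi^\ast$, and likewise $\gamma\equiv\gamma^\ast$ for every $\gamma\in\Gamma$; hence $\Gamma^\ast\models\phi^\ast$, where $\Gamma^\ast=\{\gamma^\ast:\gamma\in\Gamma\}$ is a set of \PIncz-formulas. Completeness of \PIncz (\Cref{PInc_completeness}) then yields $\Gamma^\ast\vdash\phi^\ast$ in the \PIncz-system, which is a subsystem of the \PInc-system, so this derivation is also a \PInc-derivation. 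Finally, combining $\gamma\vdash\gamma^\ast$ (for each $\gamma\in\Gamma$) with $\Gamma^\ast\vdash\phi^\ast$ and $\phi^\ast\vdash\phi$, all supplied by the congruence lemma, we conclude $\Gamma\vdash\phi$ in \PInc.

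The main obstacle is the congruence lemma, and within it the disjunction case. The delicate point is the side condition on $\vee\textsf{E}$ requiring the undischarged assumptions of the two minor subderivations to be classical only: replacing an inclusion atom sits inside a non-classical context, so naively one cannot eliminate across $\sor$. The resolution is that the replacement equivalences are closed theorems, so in the disjunction-substitution step the hypotheses $[\psi]$ and $[\xi]$ are discharged and no non-classical assumption remains undischarged, making the side condition vacuously met. Once this is checked, everything else is routine bookkeeping; alternatively, one could bypass the translation entirely and re-run the normal-form argument of \Cref{DNF_PInc} for \PInc with arbitrary inclusion atoms added as a single new base case reduced via (\ref{inc_def_prim_inc}), after which the completeness proof is verbatim that of \Cref{PInc_completeness}.
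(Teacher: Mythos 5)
Your proposal is correct and takes essentially the same route as the paper: the paper also obtains $\mathsf{a}\subseteq\mathsf{b}\dashv\vdash\bigwedge_{\mathsf{x}\in\{\top,\bot\}^{|\mathsf{x}|}}\big(\mathsf{a}^{\mathsf{x}}\to\mathsf{x}\subseteq\mathsf{b}\big)$ from \incext and \incrdt and then reduces completeness of \PInc to the already-established completeness of \PIncz (\Cref{PInc_completeness}). Your translation $(\cdot)^\ast$, the congruence lemma, and the observation that the side condition of $\vee\textsf{E}$ is met because the replacement derivations carry no undischarged non-classical assumptions simply spell out the details the paper compresses into the remark that ``from this the completeness of the system of \PInc follows.''
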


Let us end this section by illustrating the derivation of the replacement rule for inclusion atoms in the system of \PInc.


\begin{example}
\begin{enumerate}[(i)]
\item $a\leftrightarrow b, a\mathsf{c}\subseteq d\mathsf{e}\vdash b\mathsf{c}\subseteq d\mathsf{e}$.
\item $a\leftrightarrow b,d\mathsf{e}\subseteq a\mathsf{c}\vdash d\mathsf{e}\subseteq b\mathsf{c}$.
\end{enumerate}
\end{example}
\begin{proof}
(i) By \incext it suffices to show  $a\leftrightarrow b, a\mathsf{c}\subseteq d\mathsf{e}\vdash b\mathsf{c}^{x\mathsf{y}}\to x\mathsf{y}\subseteq d\mathsf{e}$ for all $x\mathsf{y}\in\{\top,\bot\}^{|\mathsf{y}|+1}$. First, by \incrdt we have $a\mathsf{c}\subseteq d\mathsf{e}\vdash a\mathsf{c}^{x\mathsf{y}}\to x\mathsf{y}\subseteq d\mathsf{e}$. Next, since 
\[a\leftrightarrow b,\neg a\mathsf{c}^{x\mathsf{y}}\vdash \neg b\mathsf{c}^{x\mathsf{y}}\vdash  \neg b\mathsf{c}^{x\mathsf{y}}\vee x\mathsf{y}\subseteq d\mathsf{e}\]
and $x\mathsf{y}\subseteq d\mathsf{e}\vdash  \neg b\mathsf{c}^{x\mathsf{y}}\vee x\mathsf{y}\subseteq d\mathsf{e}$, we obtain by applying \sore that 
\[a\leftrightarrow b, \neg a\mathsf{c}^{x\mathsf{y}}\vee x\mathsf{y}\subseteq d\mathsf{e}\vdash  \neg b\mathsf{c}^{x\mathsf{y}}\vee x\mathsf{c}\subseteq d\mathsf{e}\] 
i.e., $a\leftrightarrow b,  a\mathsf{c}^{x\mathsf{y}}\to x\mathsf{y}\subseteq d\mathsf{e}\vdash   b\mathsf{c}^{x\mathsf{y}}\to x\mathsf{c}\subseteq d\mathsf{e}$.
Hence, we conclude that $a\leftrightarrow b, a\mathsf{c}\subseteq d\mathsf{e}\vdash b\mathsf{c}^{x\mathsf{y}}\to x\mathsf{c}\subseteq d\mathsf{e}$ as required.

(ii). By item (i) we derive that $b\mathsf{c}\subseteq b\mathsf{c},a\leftrightarrow b\vdash a\mathsf{c}\subseteq b\mathsf{c}$, which implies $a\leftrightarrow b\vdash a\mathsf{c}\subseteq b\mathsf{c}$, since $\vdash b\mathsf{c}\subseteq b\mathsf{c}$ by \incid. By \inctrs, we have that $d\mathsf{e}\subseteq a\mathsf{c},a\mathsf{c}\subseteq b\mathsf{c}\vdash d\mathsf{e}\subseteq b\mathsf{c}$. Thus, we conclude that $a\leftrightarrow b,d\mathsf{e}\subseteq a\mathsf{c}\vdash d\mathsf{e}\subseteq b\mathsf{c}$, as required.
%
\end{proof}

\section{Locality revisited and interpolation}

Having studied the expressive power and proof theory for our union closed team logics in the previous two sections, let us in this section revisit the fundamental property of these logics, the {\em locality property}. 
%
%
The locality property states that the propositional variables not occurring in a formula are irrelevant for the evaluation of the formula (see \Cref{empty_prop_locality_proof}).  
It was observed already in \cite{Pietro_I/E} in the context of first-order inclusion logic that such a basic property is in fact 
nontrivial in the team-based logics (especially in non-downwards closed team logics) and thus cannot be taken for granted. In particular, a counterexample was given in \cite{Pietro_I/E} to show that first-order inclusion logic under the so-called {\em strict semantics} does not any more satisfy locality. 
In this section, we give counterexamples to show that locality fails for propositional inclusion logic \PInc and \PUw and \PAm as well if strict semantics is applied.


We also point out a subtle connection between locality and interpolation property, 
where the interpolation property states that for any entailment $\phi\models\psi$, there exists an interpolant $\theta$ in the common language of $\phi$ and $\psi$.
It follows from  recent work \cite{DAgostino19} by D'Agostino in the modal team logics setting that  all of the expressively complete propositional team logics  admit uniform interpolation (a stronger property than Craig's interpolation requiring, in addition, that the interpolants are uniform). Therefore, the expressively complete union closed logics we consider in this paper (e.g., \PUw, \PInc and \PAm, see \Cref{PU_excmp,PT_exp_pw}) all admit uniform interpolation, demonstrating also that our logics have good meta-logical properties. 
The proof in \cite{DAgostino19} requires the assumption that the logics in question satisfy the locality property.
We reformulate and elaborate this proof  in our setting, so as to also highlight the crucial role that the locality property plays in this argument. It is thus natural to ask whether locality is actually a necessary condition or presupposition for interpolation. Roughly speaking,  the locality property and the interpolation property both describe the redundant role of irrelevant variables. This similar flavor seems to suggest that these two properties may actually be connected. This problem is particularly relevant for union closed team logics, as locality may fail for these logics  under the strict semantics. In line with this discussion, we give, in this section, an example of a non-local fragment of \PUw under strict semantics in which 
Craig's interpolation actually  fails. 

As described above, the non-triviality of the locality property in the team semantics setting is (at least partly) due to the fact that there are alternative versions of the semantics. The team semantics we defined in \Cref{sec:pre} is known 
as the {\em lax semantics}. Another competing version is called the {\em strict semantics}, which is otherwise the same as lax semantics except that the semantics for the disjunction is subtly different. 
%
The strict semantics for the disjunctions $\vee$ and $\ror$  require the team in question to be split into two disjoint subteams. More precisely,  the satisfaction relation $\models^s$ for the disjunctions in strict semantics is defined as:
\begin{itemize}
\item $X\models^s\phi\sor\psi$ ~iff~ there exist $Y,Z\subseteq X$ such that $Y\cap Z=\emptyset$, $X=Y\cup Z$, $Y\models^s\phi$ and $Z\models^s\psi$. 
\item $X\models^s\phi\ror\psi$ ~~iff~~ $X=\emptyset$ or there exist nonempty subteams $Y,Z\subseteq X$ such that $Y\cap Z=\emptyset$, $X=Y\cup Z$, $Y\models\phi$ and $Z\models\psi$.
\end{itemize}
Clearly, the strict semantics for $\vee$ coincide with the lax semantics in downwards closed logics. The union closed logics we consider in this paper are, however, not downwards closed.

We now present our examples to illustrate that under strict semantics none of the  logics \PUw, \PInc and \PAm satisfies the locality property. 


\begin{example}\label{non-local-exm}
Consider the team $X$  over domain $\{p,q,r,s\}$ illustrated  in the left table below:
{\small\begin{center}
$X$: ~~\begin{tabular}{|c|c|c|c|}
\hline
 $p$ &$q$&$r$&$s$
  \\\hline
\cellcolor[gray]{.7}$1$&$0$\cellcolor[gray]{.7}&\cellcolor[gray]{.7}$0$&$0$\\\hline
\cellcolor[gray]{.7}$0$&\cellcolor[gray]{.7}$1$ &\cellcolor[gray]{.7}$0$&$0$\\\hline
\cellcolor[gray]{.9}$0$ &$1$\cellcolor[gray]{.9}&\cellcolor[gray]{.9}$0$&$1$\\\hline
\cellcolor[gray]{.9}$0$ &$0$\cellcolor[gray]{.9}&\cellcolor[gray]{.9}$1$&$0$\\\hline
 \end{tabular}
\quad\quad\quad$X\upharpoonright\{p,q,r\}$: ~~\begin{tabular}{|c|c|c|c|}
\hline
 &$p$ &$q$&$r$
  \\\hline
 $v_1$& \cellcolor[gray]{.7}$1$&$0$\cellcolor[gray]{.7}&\cellcolor[gray]{.7}$0$\\\hline
 $v_2$& \cellcolor[gray]{.7}$0$ &\cellcolor[gray]{.7}$1$&\cellcolor[gray]{.7}$0$\\\hline
 $v_3$& \cellcolor[gray]{.9}$0$ &\cellcolor[gray]{.9}$0$&\cellcolor[gray]{.9}$1$\\\hline
 \end{tabular}
\end{center}
}
Under strict semantics the \PU-formula $(p\ror q)\vee(q\ror r)$ and the \PAm-formula $(\neg p\wedge\anm(q))\vee(\anm(q)\wedge \neg r)$ are both satisfied by $X$, because $X$ can be split into two disjoint subteams (illustrated by two shaded table fragments of different tones) each satisfying one distinct $\vee$-disjunct from each formula. But both formulas fail in the restricted team $X'=X\upharpoonright\{p,q,r\}$ under strict semantics, because the full team $X'$ does not satisfy any of the four $\vee$-disjuncts in the two formulas, and $\{v_1,v_2\}$ is the only nonempty subteam of $X'$ that satisfies $p\ror q$ and $\anm(q)\wedge \neg r$, leaving the reminder subteam $\{v_3\}$ falsifying $q\ror r$ and $\neg p\wedge\anm(q)$.

Consider also the team $Y$ over domain $\{p,q,r,s,t,u,v\}$ defined below: 
{\small\begin{center}
$Y$: ~~\begin{tabular}{|c|c|c|c|c|c|c|}
\hline
 $p$ &$q$&$r$&$s$&$t$& $u$ & $v$
  \\\hline
\cellcolor[gray]{.7}$0$&$0$\cellcolor[gray]{.7}&$1$\cellcolor[gray]{.7}&$1$\cellcolor[gray]{.7}&$0$& $1$  &$0$\\\hline
\cellcolor[gray]{.7}$1$&\cellcolor[gray]{.7}$1$ &\cellcolor[gray]{.7}$0$&\cellcolor[gray]{.7}$0$&$1$&$0$ &$1$\\\hline
$1$ &$1$&\cellcolor[gray]{.9}$0$&\cellcolor[gray]{.9}$0$&\cellcolor[gray]{.9}$1$&\cellcolor[gray]{.9}$0$&$0$\\\hline
$0$ &$1$&\cellcolor[gray]{.9}$1$&\cellcolor[gray]{.9}$0$&\cellcolor[gray]{.9}$0$&\cellcolor[gray]{.9}$0$ &$0$\\\hline
 \end{tabular}
\quad\quad\quad$Y'$: ~~\begin{tabular}{|c|c|c|c|c|c|c|}
\hline
& $p$ &$q$&$r$&$s$&$t$& $u$ 
  \\\hline
 $w_1$&  \cellcolor[gray]{.7}$0$&$0$\cellcolor[gray]{.7}&$1$\cellcolor[gray]{.7}&$1$\cellcolor[gray]{.7}&$0$& $1$\\\hline
 $w_2$&\cellcolor[gray]{.7}$1$ &\cellcolor[gray]{.7}$1$&\cellcolor[gray]{.7}$0$&\cellcolor[gray]{.7}$0$&$1$&$0$\\\hline
 $w_3$&$0$ &$1$&\cellcolor[gray]{.9}$1$&\cellcolor[gray]{.9}$0$&\cellcolor[gray]{.9}$0$&\cellcolor[gray]{.9}$0$ \\\hline
 \end{tabular}
\end{center}
}
Under strict semantics the \PInc-formula $pq\subseteq rs\sor tu\subseteq rs$ is satisfied by the team $Y$, but falsified by the restricted team $Y'=Y\upharpoonright\{p,q,r,s,t,u\}$.
%
%
%
%
\end{example}

Let us remark that the above example actually only shows that formulas in the logics \PU, \PAm and \PInc under strict semantics are not {\em downwards} local, where we say that a formula $\phi(\mathsf{N})$ is {\em downwards local} if  for any teams $X$ and $Y$ with $\textsf{dom}(X)\supseteq \textsf{dom}(Y)\supseteq \mathsf{N}$ and $X\upharpoonright \mathsf{N}=Y\upharpoonright \mathsf{N}$, it holds that
\[X\models\phi\Longrightarrow Y\models\phi.\]
It is easy to verify that a formula $\phi$ is local iff it is both downwards and upwards local, where we say that $\phi(\mathsf{N})$ is {\em upwards local} if for any $X$ and $Y$ as above,
\[X\models\phi\Longrightarrow Y\models\phi.\]
By a straightforward inductive argument (similar to the proof of \Cref{empty_prop_locality_proof}), one can prove that the logics \PU, \PAm and \PInc under strict semantics are nevertheless upwards closed. 

\begin{remark}\label{rmk_union}
Consider again the teams $X,Y$ and the three formulas in Example \ref{non-local-exm}. It is easy to see that under strict semantics, the first two formulas are satisfied in both $\{v_1,v_2\}$ and $\{v_2,v_3\}$ but not in their union $X'$; similarly, the third formula is satisfied in both $\{w_1,w_2\}$ and $\{w_2,w_3\}$ but not in their union $Y'$. This shows that none of the three logics \PU, \PInc and \PAm is any more union closed when strict semantics is applied. This fact for propositional inclusion logic \PInc was observed already  in \cite{HellaKuusistoMeierVirtema17}. \PInc  behaves differently under strict and lax semantics also in terms of computational properties; the reader is referred to \cite{HellaKuusistoMeierVirtema17,HellaKuusistoMeierVollmer19} for details.
%
\end{remark}

\begin{remark}
 Given any union closed $\mathsf{N}$-team property $\mathsf{P}\in \mathbb{P}^{\dot{\overline{\cup}}}$ that contains the empty team. It is not hard to see that the \PU-formula $\bigvee_{X\in\mathsf{P}}\Psi_X$ under strict semantics still characterizes $\mathsf{P}$. The corresponding formulas in \PInc and \PAm  defined in \Cref{PT_exp_pw} are still equivalent to $\bigvee_{X\in\mathsf{P}}\Psi_X$ under strict semantics. Therefore under strict semantics all properties in $\mathbb{P}^{\dot{\overline{\cup}}}$ are still definable in the three logics \PU, \PInc and \PAm. But as illustrated in \Cref{rmk_union}, under strict semantics these three logics can also define properties that are not union closed. Determining the expressive power of these logics under strict semantics 
 is left as future work. It is worthwhile to mention that  first-order inclusion logic is known to have the same expressive power as positive greatest fixed point logic under lax semantics \cite{inclusion_logic_GH}, whereas under strict semantics it is  
 so strictly stronger that it is equivalent to existential second-order logic \cite{Hierarchies_Ind_GHK}. 
\end{remark}

Let us now define another key notion for this section, namely the (uniform) interpolation property. 
\begin{definition}
We say that a logic \LL enjoys {\em (Craig's) interpolation property} if for any pair of \LL-formulas $\phi(\mathsf{KN})$ and  $\psi(\mathsf{MN})$ with  $\mathsf{K},\mathsf{M},\mathsf{N}$ pairwise disjoint sets of propositional variables, if $\phi\models\psi$, then there exists an \LL-formula $\theta(\mathsf{N})$ in the common language $\mathsf{N}$ (called an {\em interpolant}) such that
\(\phi\models\theta\text{ and }\theta\models\psi.\)
\end{definition}
Depending on the formula $\psi$, the interpolant $\theta$ may  be different. Uniform interpolation property requires the interpolant to be  uniform for all such $\psi$. 
\begin{definition}
We say that a  logic \LL enjoys {\em uniform interpolation property} if for any \LL-formula $\phi(\mathsf{K})$ and any $\mathsf{N}\subseteq \mathsf{K}$, there is an \LL-formula $\theta(\mathsf{N})$ (called a {\em uniform interpolant})  such that $\phi\models\theta$, and for any \LL-formula $\psi(\mathsf{M})$ with $\mathsf{K}\cap\mathsf{M}\subseteq \mathsf{N}$, we have that
$\phi\models\psi$ implies $\theta\models\psi$.
\end{definition}

Clearly, uniform interpolation implies Craig's interpolation. For more in-depth discussions on interpolation, the reader is referred to, e.g., \cite{GabbayMaksimova2005,Hoogland2001}. We now proceed to reformulate the result in \cite{DAgostino19} that given the locality property, any propositional team-based logic that is expressively complete in some forgetful class of team properties enjoys uniform interpolation, and thus all of the expressively complete union closed team logics from Theorems \ref{PU_excmp} and \ref{PT_exp_pw} enjoy uniform interpolation. We call a class $\mathbb{P}$ of team properties  \emph{forgetful} if for any $\mathsf{N}$-team property $\mathsf{P}\in\mathbb{P}$ and any $\mathsf{M}\subseteq \mathsf{N}$, $\mathsf{P}\upharpoonright_{\mathsf{M}}\in \mathbb{P}$, where $\mathsf{P}\upharpoonright_{\mathsf{M}}=\{X\upharpoonright \mathsf{M}: X\in \mathsf{P}\}$. Intuitively, the notion is such termed as the team property $\mathsf{P}\upharpoonright_{\mathsf{M}}$ in the definition simply ``forgets" the information concerning all propositional variables in the set $\mathsf{N}\setminus \mathsf{M}$.
For example, the collection $\mathbb{P}^{\dot{\overline{\cup}}}$ of all union closed team properties which contain the empty team is forgetful, so are  
the collection $\mathbb{F}$ of all flat team properties, and the collection of all downwards closed team properties which contain the empty team (in which propositional dependence logic is expressively complete \cite{VY_PD}), etc.

One important lemma in the argument of \cite{DAgostino19} (formulated in our setting) is the observation that team semantics has the {\em amalgamation property} in the following sense, where we write simply $\mathsf{MN}$ for the union $\mathsf{M}\cup\mathsf{N}$ of two domains $\mathsf{M},\mathsf{N}$.

\begin{lemma}[Amalgamation]\label{amag}
For any $\mathsf{K}$-team $X$ and $\mathsf{M}$-team $Y$ such that  $X\upharpoonright(\mathsf{K}\cap \mathsf{M})=Y\upharpoonright (\mathsf{K}\cap \mathsf{M})$, there exists a $\mathsf{KM}$-team $Z$ such that $Z\upharpoonright \mathsf{K}=X$ and $Z\upharpoonright \mathsf{M}=Y$.
\end{lemma}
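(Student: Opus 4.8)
The plan is to build $Z$ by hand, amalgamating those valuations of $X$ and $Y$ that already agree on the shared part of their domains. Set $\mathsf{I} := \mathsf{K}\cap\mathsf{M}$, so the hypothesis reads $X\upharpoonright\mathsf{I}=Y\upharpoonright\mathsf{I}$. Viewing each valuation as a set of ordered pairs, for $s\in X$ and $t\in Y$ with $s\upharpoonright\mathsf{I}=t\upharpoonright\mathsf{I}$ the set-theoretic union $s\cup t$ is again a function, since $s$ and $t$ agree on the overlap $\mathsf{I}\cup\{\bot,\top\}$; its domain is $(\mathsf{K}\cup\{\bot,\top\})\cup(\mathsf{M}\cup\{\bot,\top\})=\mathsf{KM}\cup\{\bot,\top\}$ and it assigns $\bot\mapsto 0$, $\top\mapsto 1$, so it is a genuine $\mathsf{KM}$-valuation. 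I then define
\[
Z := \{\, s\cup t \mid s\in X,\ t\in Y,\ s\upharpoonright\mathsf{I}=t\upharpoonright\mathsf{I}\,\}.
\]

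Next I would verify the two restriction identities. For $Z\upharpoonright\mathsf{K}=X$, the inclusion $\subseteq$ is immediate because $(s\cup t)\upharpoonright\mathsf{K}=s\in X$; for $\supseteq$, given any $s\in X$ the hypothesis $X\upharpoonright\mathsf{I}=Y\upharpoonright\mathsf{I}$ yields some $t\in Y$ with $t\upharpoonright\mathsf{I}=s\upharpoonright\mathsf{I}$, so $s\cup t\in Z$ and restricts back to $s$. The identity $Z\upharpoonright\mathsf{M}=Y$ follows by the symmetric argument. The degenerate cases need no separate treatment: if $X=\emptyset$ then $Y\upharpoonright\mathsf{I}=\emptyset$ forces $Y=\emptyset$, and $Z=\emptyset$ satisfies both requirements.

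The only point demanding genuine care — the closest thing to an obstacle, though it is mild — is the well-definedness of $s\cup t$ as a valuation: one must invoke the agreement hypothesis precisely to rule out a conflict on $\mathsf{I}$, and must observe that the constants $\bot,\top$ are assigned their fixed values $0,1$ consistently by both valuations. Once this is settled, the remainder is a routine unpacking of the definition $X\upharpoonright\mathsf{M}=\{s\upharpoonright\mathsf{M}\mid s\in X\}$, requiring no further computation.
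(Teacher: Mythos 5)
Your proof is correct and takes essentially the same approach as the paper: your team $Z$ of unions of compatible pairs $s\cup t$ is extensionally identical to the paper's $Z=\{v:\mathsf{KM}\cup\{\top,\bot\}\to\{0,1\}\mid v\upharpoonright\mathsf{K}\in X,~v\upharpoonright\mathsf{M}\in Y,~v(\top\bot)=10\}$. The only difference is that you spell out the well-definedness of $s\cup t$ and the two restriction identities, which the paper dismisses as ``clear.''
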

\begin{proof}
Clearly the required $\mathsf{KM}$-team $Z$ can be defined as
\[Z=\{v:\mathsf{KM}\cup\{\top,\bot\}\to\{0,1\}\mid v\upharpoonright\mathsf{K}\in X,~v\upharpoonright\mathsf{M}\in Y\text{ and } v(\top\bot)=10\}.\]
\end{proof}

Now, we are ready to give the proof of the uniform interpolation result for expressively complete propositional team logics that satisfy locality property. The argument is due to \cite{DAgostino19}. We provide here a detailed proof in which all the steps involving applications of the (upwards and downwards) locality property are explicitly spelled out.



\begin{theorem}[Uniform interpolation]\label{uni_interpo}
Let \LL be a team-based propositional logic that has the locality property and is expressively complete in some forgetful class $\mathbb{P}$ of team properties. Then \LL enjoys uniform interpolation property. 
%
%
%
\end{theorem}

\begin{proof}
Let $\phi(\mathsf{K})$ be an \LL-formula and $\mathsf{N}\subseteq \mathsf{K}$.
Since \LL is expressively complete in $\mathbb{P}$, $\llbracket \phi\rrbracket_{\mathsf{K}}\in\mathbb{P}$. As $\mathbb{P}$ is forgetful, $\llbracket \phi\rrbracket_{\mathsf{K}}\!\!\upharpoonright_{\mathsf{N}}\in \mathbb{P}$ as well.
By the expressive completeness again, we  find an \LL-formula $\theta(\mathsf{N})$ such that  $\llbracket \theta\rrbracket_{\mathsf{N}}=\llbracket \phi\rrbracket_{\mathsf{K}}\!\!\upharpoonright_{\mathsf{N}}$. We show that $\theta$ is the required uniform interpolant.
 
 To see that $\phi\models\theta$, suppose $X\models\phi$ with $\textsf{dom}(X)\supseteq \mathsf{K}$. Since \LL is (downwards) 
 local, we have also that $X \upharpoonright \mathsf{K}\models\phi(\mathsf{K})$, i.e., $X\upharpoonright \mathsf{K}\in \llbracket\phi\rrbracket_{\mathsf{K}}$. Thus 
 \[X\upharpoonright \mathsf{N}=(X \upharpoonright \mathsf{K})\upharpoonright \mathsf{N}\in \llbracket \phi\rrbracket_{\mathsf{K}}\!\!\upharpoonright_{\mathsf{N}}=\llbracket \theta\rrbracket_{\mathsf{N}},\]
namely $X\upharpoonright \mathsf{N}\models\theta$. Then, since \LL is (upwards)
local, we conclude that $X\models\theta$.
 
 Next, assuming  that $\psi(\mathsf{M})$ is an \LL-formula with $\mathsf{K}\cap \mathsf{M}\subseteq \mathsf{N}$ and $\phi\models\psi$, we show that $\theta\models\psi$. Suppose $X\models\theta$ and $\textsf{dom}(X)\supseteq \mathsf{MN}$. Since \LL is (downwards)
 local, we have  $X\upharpoonright \mathsf{N}\models\theta(\mathsf{N})$. Thus 
 \(X\upharpoonright \mathsf{N}\in \llbracket \theta\rrbracket_{\mathsf{N}}=\llbracket \phi\rrbracket_{\mathsf{K}}\!\!\upharpoonright_{\mathsf{N}}.\)
 It follows that there exists a ${\mathsf{K}}$-team $Y$ such that $Y\models\phi$ and $Y\upharpoonright \mathsf{N}=X\upharpoonright \mathsf{N}$.
 
Since $\mathsf{N}\subseteq\mathsf{K}$ and $\mathsf{K}\cap\mathsf{M}\subseteq \mathsf{N}$, we have that
\[\mathsf{N}\subseteq\mathsf{K}\cap \mathsf{MN}=(\mathsf{K}\cap \mathsf{M})\cup(\mathsf{K}\cap\mathsf{N})\subseteq \mathsf{N},\]
thus $\mathsf{K}\cap \mathsf{MN}=\mathsf{N}$.
 Now, by \Cref{amag}, 
 there exists a $\mathsf{KMN}$-team $Z$ such that $Z\upharpoonright \mathsf{K}= Y$ and $Z\upharpoonright \mathsf{MN}= X\upharpoonright \mathsf{MN}$.
  Since $Y\models\phi(\mathsf{K})$, we have $Z\upharpoonright \mathsf{K}\models\phi$, which by the  (upwards)
 locality of \LL implies $Z\models\phi$. It then follows from assumption $\phi\models\psi$ that $Z\models\psi$. Since \LL is (downwards) local, 
 we obtain $Z\upharpoonright \mathsf{MN}\models\psi$, which implies $X\upharpoonright \mathsf{MN}\models\psi$. Hence, we conclude $X\models\psi$, as \LL is (upwards) 
 local.
\end{proof}


It then follows from \Cref{PU_excmp,PT_exp_pw} that uniform interpolation holds for all the expressively complete union closed team logics we consider in this paper.

\begin{corollary}
The  logics $\PUw, \PInc, \PIncz,\PAm$, and $\Picnst$ enjoy uniform interpolation property and thus also Craig's interpolation property.
\end{corollary}

Let us emphasize again that the proof of \Cref{uni_interpo} makes essential use of the locality property (both  upwards and downwards locality, to be more precise). 
It is not clear whether the locality property is actually a necessary condition for uniform interpolation. Yet let us now demonstrate 
that the interpolation property can fail for team-based logics without the locality property. Recall from \Cref{non-local-exm} that \PUw with strict semantics is not local, and the counterexample can be built with four propositional variables. We shall 
consider  the restricted language of \PUw with four propositional variables  $p,q,r,s$ and constants $\top,\bot$ only. This language,  denoted as $\PUw_4$, is clearly still not local under strict semantics. We now illustrate that $\PUw_4$ does not admit (Craig's) interpolation. 


\begin{example}\label{fail_interp_exam}
Consider $\PUw_4$ with strict semantics and consider the team $X$ from \Cref{non-local-exm} again. We claim that
\begin{equation}\label{non_interpolation_exm_eq1}
\Psi_{X'}\wedge \big((p\ror q)\vee(q\ror r)\big)\models^s s\ror \neg s,
\end{equation}
where $\Psi_{X'}=(p\wedge \neg q\wedge \neg r)\ror(\neg p\wedge q\wedge\neg r)\ror(\neg p\wedge\neg q\wedge r)$ is the formula that  defines (under lax semantics) the team $X'=X\upharpoonright\{p,q,r\}$ modulo the empty team in the sense of Equation (\ref{eq_psi_x_prop}) from \Cref{sec:expr_comp}. Now, observe that in the entailment (\ref{non_interpolation_exm_eq1}) the common language of two formulas on two sides of the turnstile ($\models$) is empty. Hence there is no interpolant for the entailment (\ref{non_interpolation_exm_eq1}) (the constants  $\bot$ and $\top$ are clearly not interpolants).

To see why (\ref{non_interpolation_exm_eq1}) holds, take any nonempty team $Y$ over the domain $\{p,q,r,s\}$ that satisfies both $\Psi_{X'}$ and $(p\ror q)\vee(q\ror r)$ under strict semantics. It is easy to see that $Y\models^s \Psi_{X'}$ implies that $Y\upharpoonright \{p,q,r\}=X'$. Now, similarly to what we have argued in \Cref{non-local-exm}, in order for the $\{p,q,r,s\}$-team $Y$ to satisfy $(p\ror q)\vee(q\ror r)$ under strict semantics,  the valuation $v_2$ in $X'$ must extend in $Y$ to two distinct valuations $v_2'$ and $v_2''$. In the language $\PUw_4$ with four propositional variables only, this can only be the case if $v_2'(s)=0$ and $v_2''(s)=1$. From this we must conclude that $Y\models^s s\ror \neg s$, as desired.
\end{example}


\section{Conclusion and further directions}

In this paper, we have studied the expressive power, axiomatization problem and locality property for several propositional union closed team logics. Building on the result in \cite{YangVaananen:17PT} that \PU is expressively complete, we proved that \PInc and \PAm as well as their  fragments  \PIncz and \Picnst are also expressively complete. 
It is interesting to note that our version of the propositional inclusion logic \PInc allows inclusion atoms $\mathsf{a}\subseteq\mathsf{b}$ with the constants $\top$ and $\bot$ in the arguments. As we illustrated, the original version of \PInc, the version in which inclusion atoms $\mathsf{p}\subseteq\mathsf{q}$ can only have propositional variables in the arguments, is actually strictly less expressive, and  not expressively complete. Recall that 
first-order inclusion logic was shown in \cite{inclusion_logic_GH} to be not expressively complete either, since some of the union closed existential second-order team properties cannot be defined in the logic.
While the union closed  fragment of existential second-order logic was already characterized in \cite{HoelzelWilke20} by using an involved fragment of inclusion-exclusion logic, it is reasonable to ask 
whether it is possible to find a simpler expressively complete union closed first-order team-based logic, by extending first-order logic with certain more general inclusion atoms, as done in the present paper on the propositional level. 

We have introduced sound and complete natural deduction systems  for  \PU and \PInc as well as \PIncz. 
How to axiomatize the logic \PAm is left as future work. 
The completeness proofs for the systems of \PU and \PInc makes heavy use of the disjunctive normal form of the two logics. Since the normal form of \PAm (\Cref{nf-pinc-picnst}(\ref{nf-pinc-picnst-1})) is substantially more involved, in order to obtain an elegant proof system for \PAm, one may need to take a different approach, or at least to formulate a simpler normal form for the logic.
%
Introducing (cut-free) sequent calculi for all these union closed logics and investigating their proof-theoretic properties are natural further directions. For propositional downwards closed logics, some first steps along this line were taken in \cite{FrittellaGrecoandPalmigianoYang16,IemhoffYang15}.



We have also analyzed the locality property in union closed team logics. We stressed that this simple property should not be taken for granted in the context of team semantics. We gave examples to illustrate that the union closed team logics considered in this paper under strict semantics actually lack the locality property. We have also briefly discussed that the locality property can actually be decomposed into the upwards and downwards locality. While this distinction between upwards and downwards locality did not lead to  new result in this paper,
it is our hope that this distinction can inspire further research on locality.
We have also discussed the connection between locality and interpolation. We reformulated the proof of the result in \cite{DAgostino19} that given  locality, all expressively complete team-based propositional logics (in some forgetful class) enjoy uniform interpolation (\Cref{uni_interpo}). We highlighted the subtle but crucial role that the locality property play in the argument for this result. This then naturally raises the question whether locality is actually a presupposition for interpolation. The example we gave in \Cref{fail_interp_exam} is at least consistent with this idea. A thorough investigation into the connection between locality and interpolation is left for future work.


We end by mentioning two other further directions. First is to find  applications of union closure team logics in other fields. Propositional downwards closed team logics have natural interpretations in inquisitive semantics (see e.g., \cite{Ciardelli2015}). Developing similar connections for union closed team logics in natural language and other contexts would be an interesting further direction. Along this line, recent work by Aloni \cite{Aloni2021} used a union closed team-based modal logic with the atom {\footnotesize$\mathsf{NE}$}  to model {\em free-choice} inferences in natural language, where {\footnotesize$\mathsf{NE}$} states that the team in question is nonempty (and thus the resulting logic, studied in \cite{Anttila2021}, does not satisfy the empty team property).  
Another interesting direction is to consider team-based (propositional) logics with other closure properties. The team-based logics considered in the literature are usually conservative extensions of classical logic. The characteristic property of classical propositional formulas is the flatness property  (\Cref{flat_char}), which is equivalent to the combination of the empty team property, the union closure property and the downwards closure property. In this respect,  
the union closure and downwards closure property are  natural closure properties for team-based logics. In contrast, for instance, the upwards closure property is  not very natural, because, as we pointed out already, classical formulas (e.g., already the propositional variable $p$) are not upwards closed. Nevertheless, there may well be other meaningful ways to decompose the flatness property. For an obvious example, the flatness property is stated as a property of two directions, each of which  corresponds to a closure property that has not yet been considered in the literature. The closure properties obtained from this and possibly other decompositions could give rise to other interesting logics.








\section*{Acknowledgments}

The author would like to thank Aleksi Anttila,  Fausto Barbero, Pietro Galliani, Rosalie Iemhoff, Juha Kontinen, Lauri Hella, and Jouko V\"{a}\"{a}n\"{a}nen for interesting discussions related to the topic of this paper. The author is also grateful to an anonymous referee for valuable comments concerning the presentation of the results in the paper. 

This research was supported by grants 330525 and 308712 of Academy of Finland, and  Research Funds of  University of Helsinki.

\vspace{-8pt}

\section*{References}


\end{document}